\theoremstyle{definition}
\newtheorem{defi}{Definition}[section]
\newtheorem{remark}[defi]{Remark}
\newtheorem{example}[defi]{Example}
\theoremstyle{plain}
\newtheorem{theorem}[defi]{Theorem}
\newtheorem{cor}[defi]{Corollary}
\newtheorem{lemma}[defi]{Lemma}
\newtheorem{prop}[defi]{Proposition}
\newtheorem{asum}[defi]{Assumption}
\numberwithin{equation}{section}
\newcommand{\blue}[1]{\textcolor{black}{#1}}
\newcommand{\green}[1]{\textcolor{black}{#1}}
\newcommand{\greenNew}[1]{\textcolor{black}{#1}}
\newcommand*\samethanks[1][\value{footnote}]{\footnotemark[#1]}
\title{Multi-dimensional fractional Brownian motion in the $G$-setting}
\author{Francesca Biagini\thanks{Workgroup Financial and Insurance Mathematics, Department of Mathematics, Ludwig-Maximilians Universit{\"a}t, Theresienstrasse 39, 80333 Munich, Germany. Emails: biagini@math.lmu.de, oberpriller@math.lmu.de.} \and Andrea Mazzon\thanks{University of Verona, Via Cantarane 24, 37129 Verona, Italy. Email: andrea.mazzon@univr.it.} \and Katharina Oberpriller\samethanks[1]}
\begin{document}
\maketitle
\begin{abstract}
In this paper we introduce a definition of a {multi}-dimensional fractional Brownian motion of Hurst index $H \in (0,1)$ under volatility uncertainty {(in short $G$-fBm)}. We study the properties of such a process and provide first results about stochastic calculus with respect to a \blue{multi-dimensional} $G$-fBm for a Hurst index \blue{$H\in (\frac{1}{2},1)$}.
\end{abstract}
\green{\textbf{Keywords:}
Volatility uncertainty, fractional Brownian motion, pathwise stochastic integral\\
\textbf{Mathematics Subject Classification (2020):} 60G65, 60G22, 60H05}

\section{Introduction}

The modeling of randomness through a possibly multi-dimensional Brownian motion has been encountering in the last decades significant challenges from two distinct perspectives when coming to financial applications. On \blue{the} one side, various irregularities which are observed in the trajectories of time series {of asset prices} are not adequately described by a classical Brownian motion. On the other side, {modelization needs to take into account the risk connected to the choice of one particular probability measure.}

 The main goal of our work is to {address both issues by }defin{ing} a multi-dimensional fractional Brownian motion in the so called $G$-setting {introduced} by Peng in  \cite{peng_nonlinearExpectation_book}, which addresses volatility uncertainty and ambiguity aversion of traders in a framework with multiple, possibly non-dominated, probability measures.

{F}ractional Brownian motion (fBm), introduced in 1940 by Kolmogorov for modeling turbulence in liquids in \cite{kolmogorov1940wienersche}, has been {used} in \cite{gatheral2018volatility} to account for the volatility roughness, or irregularity, empirically observed in high-frequency data. The fractional Brownian motion also describes further behaviours observed in time series, like long-range dependence, non-stationarity and self-similarity, which cannot be explained within a pure Brownian motion framework. Several contributions in the last years have then proposed to ``roughen'' classic Markovian stochastic volatility models such as Hull-White, Heston and SABR by replacing the Gaussian driver of the volatility process by a fractional one, see for example \cite{ElEuchRosenbaum_2018}, \cite{ElEuchFukasawaRosenbaum_2018} and \cite{ElEuchRosenbaum_2019}.

The fractional Brownian motion is {a class of processes indicized} by the so called Hurst index $H \in (0,1)$, describing its roughness. Apart from the case $H = \frac 1 2$, when it boils down to the classical Brownian motion, the fractional Brownian motion is not a semimartingale, which makes it particularly challenging to mathematically deal with it. For an overview about fractional Brownian motion and its properties and applications we refer to \cite{nourdin_fractional_brownian_motion} {and \cite{biagini_hu_oksendal_zhang_2008}.}

{M}odel uncertainty, \blue{i.e.} the fact that some features of the stochastic processes under investigation can be determined only up to a certain degree of precision, has also gained more importance over the last years. A stream of literature covers the case of uncertainty about the drift, see e.g. \cite{chen2002ambiguity}, \cite{cheng2013optimal}, \cite{balter2021time}, \cite{christensen2013optimal}. On the other hand, further approaches focus on volatility uncertainty, see among others \cite{avellaneda1995pricing}, \cite{lyons1995uncertain}, \cite{nutz2012superhedging}, \cite{muhle2018risk}. Under this framework, the set of probability measures is in general non-dominated, which makes things more difficult. In particular, the $G$-Brownian motion calculus has been introduced by Shige Peng, see e.g. \cite{peng_nonlinearExpectation_book}, to account for volatility uncertainty. In this framework, the expectation operator is not linear but only sublinear, and fundamental distinctions from the classical setting arise: for example,  the quadratic variation of a $G$-Brownian motion is not deterministic as under a single prior, but rather a stochastic process. Due to these profound differences, it is not straightforward to extend existing concepts and definitions {under} volatility uncertainty. However, several standard results of classical stochastic analysis have been carried to the $G$-setting: examples include a $G$-It\^{o} formula, a martingale representation theorem, and the existence and uniqueness of the solution to a $G$-Stochastic Differential Equation (\blue{$G$}-SDE). On the level of stochastic processes, a major achievement was the definition of a $G$-L\'{e}vy process with independent and stationary increments and jumps in the $G$-setting, see \cite{Hu_Peng_Levy}.

It is then a natural question how to introduce a fractional Brownian motion under volatility uncertainty. A first attempt in this direction is provided in \cite{fBrownianMotionGSetting1} and \cite{chen_2013} for defining a one-dimensional fractional Brownian motion {under volatility uncertainty}. The approach in \cite{chen_2013} directly transfers the definition of a fractional Brownian motion as a centered Gaussian process with a given covariance function into the $G$-setting.  However, this gives rise to several problems, as also highlighted in \cite{fBrownianMotionGSetting1}. Notably, the definition of a $G$-Gaussian process in \cite{chen_2013} diverges from the one introduced by Peng in \cite{peng_Gaussian_2011}. % 
Furthermore, the proof of the existence of such a fractional $G$-Brownian motion relies on a two-sided $G$-Brownian motion, whose existence is not clear. For a detailed discussion on these problems, we refer to Section {\ref{sec:Multi-DimensionalGfBm}}. 

These issues have been overcome by the definition of a one-dimensional fractional $G$-Brownian motion in \cite{fBrownianMotionGSetting1}. Here, a fractional $G$-Brownian motion is introduced as reported in Definition \ref{def:fBrownianMotionSetting1}. The authors prove the existence of a process satisfying this definition and establish a representation theorem by expressing it as an integral of a stochastic kernel and a volatility function with respect to a standard Brownian motion. Additionally, it is shown that the fractional $G$-Brownian motion exhibits properties such as self-similarity and long-range dependence, among others. 

{In this paper we study the problem of providing a well-posed} definition of a fractional $G$-Brownian motion in higher dimensions. Extending  Definition \ref{def:fBrownianMotionSetting1} to the multi-dimensional case is a priori not possible. First, the components of a multi-dimensional $G$-Brownian motion are not independent, differently from those of a standard Brownian motion. This is {clear by} looking at the quadratic variation process of a $d$-dimensional $G$-Brownian motion, as the off-diagonal elements are stochastic processes different from zero. For this reason, one needs to account for the covariance structure inherent to the multi-dimensional fractional $G$-Brownian motion, originating from the covariance structure of a multi-dimensional $G$-Brownian motion. 

{Furthermore, we need to avoid} to rely on a  two-sided $G$-Brownian motion, which introduces several mathematical difficulties as explained before. {To this purpose}, in Definition \ref{def:FractionalGBrownianMotionOurDDimension} we introduce a multi-dimensional fractional $G$-Brownian motion {($G$-fBm)} as an integral with respect to a multi-dimensional $G$-Brownian motion, generalizing the Volterra representation of a standard fractional Brownian motion {of} Proposition 2.5 in \cite{nourdin_fractional_brownian_motion}. 
We show that for $d=1$ our definition of a fractional $G$-Brownian motion also adheres to the characterizations of Definition \ref{def:fBrownianMotionSetting1}. We study several properties of the \blue{multi-dimensional} fractional $G$-Brownian motion such as \blue{its} covariance function, self-similarity and the distribution of the random variable{s} $B_t^H(i)$ for $t \geq 0$, \blue{$H \in (0,1)$} and \blue{$i=1,...,d$}. Furthermore, we prove componentwise properties of the increments of our process, such as stationarity and the behaviour of the product of the increments from two different components. Additionally, we analyze the long and short memory property of a multi-dimensional fractional $G$-Brownian motion. We also focus on the paths properties of the process, which are crucial for modelling financial time series data. Among others\blue{,} we show that a \blue{multi-dimensional} fractional $G$-Brownian motion of Hurst index \blue{$H\in (0,1)$} admits a modification with H\"older continuous paths of order $\alpha <H$ and is nowhere differentiable, quasi-surely. Most of these properties are generalizations of the classical ones under volatility uncertainty. However, the proofs are more involved, as we can no longer rely on the Gaussian property but need to come up with other techniques. 

For applications, it is crucial to {introduce stochastic calculus} with respect to a fractional $G$-Brownian motion. In the classical case of a fractional Brownian motion {there are }several different ways {of} defin{ing} an integral with respect to a fractional Brownian motion, as it is not possible to use the It\^o integral. All these different approaches can be classified in two methods. The first method is a pathwise construction of the integral, relying on the paths properties of the fractional Brownian motion, see e.g. \cite{zaehle} and \cite{nualart2002}. The second method is based on Wick products or Malliavin calculus, see e.g. \cite{Hu_Oksendal_2003} or \cite{Decreusefond_1999}. Here, we refer only to the first works in these areas and stress that the two approaches have been extensively developed further into different directions, see {\cite{biagini_hu_oksendal_zhang_2008} and \cite{mishura_fractional_BM}.} {Here,} we {follow the} method of \cite{nualart2002} to introduce a {pathwise} stochastic integral with respect to a fractional $G$-Brownian motion \blue{of} Hurst index \blue{$H\in(\frac{1}{2},1)$}. The definition allows us to derive an existence result for a pathwise differential equation driven by a fractional $G$-Brownian motion, which is the first result in the literature studying such equations. We prove a pathwise It\^{o}'s formula for \blue{a fractional $G$-Brownian motion of Hurst index $H\in(\frac{1}{2},1)$}.  \blue{We conclude the manuscript illustrating an application of this result by analyzing the existence of arbitrages in fractal financial markets under volatility uncertainty.}

{The paper is organized as follows. In Section \ref{sec:Gsetting} we provide an overview about the $G$-expectation framework. In Section \ref{sec:Multi-DimensionalGfBm} we introduce our definition of a multi-dimensional fractional $G$-Brownian motion and study its properties. The paper is concluded by deriving a pathwise stochastic integration for \blue{a} fractional $G$-Brownian motion {of Hurst index $H \in \greenNew{(0,\frac{1}{2})}$} in Section \ref{sec:StochasticCalculus}.}

\section{$G$-Setting}\label{sec:Gsetting}
We now recall the basic concepts for the $G$-setting based on \cite{peng_nonlinearExpectation_book}.
\begin{defi}\label{def:initial}
Let $\Omega$ be a given set and let $\mathcal{H}$ be a vector lattice of $\mathbb{R}$-valued functions defined on $\Omega$ such that $c \in\mathcal{H}$ for all constants $c$, and $\vert X \vert \in \mathcal{H},$ if $X \in \mathcal{H}$. $\mathcal{H}$ is considered as the space of random variables. A \emph{sublinear expectation} ${\mathbb{E}}$ on $\mathcal{H}$ is a functional ${\mathbb{E}}: \mathcal{H} \to \mathbb{R}$ satisfying the following properties: for all $X,Y \in \mathcal{H}$, we have
\begin{enumerate}
	\item Monotonicity: If $X \geq Y$ then $\mathbb{E}[X] \geq \mathbb{E}[Y]$\blue{;}
\item Constant preserving: $\mathbb{E}[c]=c$ $\blue{\forall c \in \mathbb{R}}$\blue{;}
\item Subadditivity: $\mathbb{E}[X+Y] \leq \mathbb{E}[X]+\mathbb{E}[Y]$\blue{;}
\item Positive homogenity: $\mathbb{E}[\lambda X]= \lambda \mathbb{E}[X]$, $\forall \lambda >0$. 
\end{enumerate}
The triple $(\Omega, \mathcal{H}, \mathbb{E})$ is called a \emph{sublinear expectation space}. If only 1. and 2. are satisfied, $\mathbb{E}$ is called a \emph{nonlinear expectation} and the triple $(\Omega, \mathcal{H}, \mathbb{E})$ is called a \emph{nonlinear expectation space}.
\end{defi}
{Fix $d \in \mathbb{N}$}. Let $\mathcal{H}$ be a space of random variables such that if $X_i \in \mathcal{H}, i=1,...,d$, then 
\begin{equation*}
	\varphi(X_1,...,X_d) \in \mathcal{H}, \quad \text{ for all } \varphi \in C_{l,lip}(\mathbb{R}^d),
\end{equation*}
where $C_{l,lip}(\mathbb{R}^d)$ denotes the linear space of {$\mathbb{R}$-valued} functions satisfying for each $x,y \in \mathbb{R}^d$,
\begin{equation}
	\vert \varphi(x)-\varphi(\blue{y}) \vert \leq C_{\varphi}(1+ \vert x \vert^m + \vert y \vert^m) \vert x-y \vert, 
\end{equation}
for some $C_{\varphi}>0, m \in \mathbb{N}$ depending on $\varphi$.
In this case, $X=(X_1,...,X_d)$ is called a $d$-dimensional \emph{random vector}, denoted by $X \in \mathcal{H}^d$. 

We now introduce \blue{a} $G$-Brownian motion by giving the following definitions.
\begin{defi} \label{defi:independenceRandomVariables}
In a sublinear expectation space $(\Omega,\mathcal{H},\mathbb{E})$, a random vector $Y \in \mathcal{H}^{d}$ is said to be \emph{independent} \blue{of} another random vector $X \in \mathcal{H}^m$ \blue{for $m\in \mathbb{N}$} under $\mathbb{E} [\cdot]$ if for each test function $\varphi \in C_{l,lip}(\mathbb{R}^{d+m})$ we have
$$\mathbb{E}[\varphi(X,Y)]=\mathbb{E}\left[\mathbb{E}[\varphi(x,Y)]\vert_{x=X}\right].$$ 
 \end{defi}
\begin{defi}
Let $X$ and $Y$ be two $d$-dimensional random vectors defined on sublinear expectation spaces $(\Omega_1,\mathcal{H}_1, \mathbb{E}_1)$ and $(\Omega_2,\mathcal{H}_2, \mathbb{E}_2)$, respectively. They are called \emph{identical{ly} distributed} if $$\mathbb{E}_1[\varphi(X)]=\mathbb{E}_2[\varphi(Y)] \quad \text{for {all} }  \varphi \in C_{{l},lip}(\mathbb{R}^{d}).$$ In this case, we write $X \sim Y$.
\end{defi}

\blue{\begin{defi}
Let $X$ and $Y$ be two $d$-dimensional random vectors on a sublinear expectation space $(\Omega, \mathcal{H}, \mathbb{E})$. Then $X$ is called a \emph{independent copy} of $Y$ if $X$ is independent of $Y$ and $X \sim Y$.
\end{defi}}

\begin{defi} \label{defi:G-MaximalDistribution}
A random variable $\eta$ on a sublinear expectation space $(\Omega, \mathcal{H}, \mathbb{E})$ is called \emph{maximally distributed} if there exists a bounded and closed subset $\Gamma \in \mathbb{R}^d$ such that 
$$
\mathbb{E}[\varphi(\eta)]=\max_{y \in \Gamma} \varphi(y), \quad \varphi \in C_{l\blue{,}lip}(\mathbb{R}^d).
$$
\end{defi}

\begin{defi} \label{defi:G-NormalDistribution}
A {$d$-dimensional} random vector $X$ \blue{on} a sublinear expectation space $(\Omega, \mathcal{H}, \mathbb{E})$ is called \emph{$G$-normally distributed} if for each $a,b \geq 0$ it holds
\begin{equation*}
	a X + b \overline{X} \sim \sqrt{a^2 + b^2}X, 
\end{equation*}	
where $\overline{X}$ is {any} independent copy of $X$. The letter $G$ denotes the function $G: \mathbb{S}(d) \to \mathbb{R}$ defined by
\begin{equation}
	G(A):=  \frac{1}{2} \mathbb{E}\left[ \langle A X, X \rangle \right], \label{G-equationOneDimension_1}
\end{equation}
where $\mathbb{S}(d)$ is the collection of all symmetric $(d \times d)$-matrices \blue{and $\langle \cdot, \cdot \rangle$ denotes the standard dot product on $\mathbb{R}^d$}.
\end{defi}
Note that $G$ is a continuous, sublinear function which is monotone in $A \in \mathbb{S}(d)${, i.e. for each $p, \bar{p} \in \mathbb{R}^d$ and $A, \bar{A} \in \mathbb{S}(d)$ it holds
$$
\begin{cases}
G(p+\bar{p}, A+\bar{A})  \leq G(p, A)+G(\bar{p}, \bar{A}), \\
G(\lambda p, \lambda A)  =\lambda G(p, A), \quad \forall \lambda \geq 0, \\
G(p, A)  \leq G(p, \bar{A}), \quad \text { if } A \leq \bar{A}.
\end{cases}
$$} 
Moreover, there exists a bounded, closed and convex subset $\Theta \subseteq \mathbb{S}(d)$ such that
\begin{equation}
	G(A)=  \frac{1}{2} \sup_{\gamma \in \Theta} \textnormal{tr}[A \gamma], \label{G-equationOneDimension}
\end{equation}
{see equation (2.2.4) of \cite{peng_nonlinearExpectation_book}.}
{The existence of a vector $X$ which is $G$-normally distributed according to Definition \ref{defi:G-NormalDistribution} is given in Proposition 2.2.7 of \cite{peng_nonlinearExpectation_book}. Exploiting the representation \eqref{G-equationOneDimension}, we identify $\Theta$ as the uncertainty set for the covariance of $X$ and denote $
X \sim \mathcal{N}\left( 0, \Theta\right)$.
}
	
\begin{defi} \label{defi:G-BrownainMotion}
	Let $G: \mathbb{S}(d) \to \mathbb{R}$ be a given monotonic and sublinear function. A $d$-dimensional stochastic process $B=(B_t)_{t \geq 0}$ on a sublinear expectation space $(\Omega, \mathcal{H}, \mathbb{E})$ is called a \emph{$G$-Brownian motion} if it satisfies the following conditions:
	\begin{enumerate}
		\item $B_0=0$;
		\item $B_t \in \mathcal{H}^d$ for each $t \geq 0$;
		\item For each $t, s \geq 0$ the increment $B_{t+s}-B_t$ is independent of $(B_{t_1},...,B_{t_n})$ for each $n \in \mathbb{N}$ and $0 \leq t_1 <...<t_n \leq t$. Moreover, $(B_{t+s}-B_t)s^{-1/2}$ is $G$-normally distributed. 
	\end{enumerate}
\end{defi}

{\begin{defi} \label{defi:G-Gaussian}
An $\mathbb{R}^d$-valued stochastic process $X=(X_t)_{t \geq 0}$ defined on a sublinear expectation space {$(\Omega,\mathcal{H}, \mathbb{E})$} is called a \emph{$G$-Gaussian process} if for each $\underline{t}=(t_1,..,t_n) \in \mathcal{T}$, $X_{\underline{t}}=(X_{t_1},...,X_{t_n})$ is an $\mathbb{R}^{ d \times n}$-valued normally distributed random variable. Here, we denote 
\begin{equation} \label{eq:VectorTimes}
\mathcal{T}:= \left\lbrace \underline{t}=(t_1,...,t_m):  m \in \mathbb{N}, t_i \in [0,\infty), t_i {<} t_j, 0 \leq i {<} j \leq m, i \neq j\right\rbrace.
\end{equation}
\end{defi}
\begin{remark} \label{remark:GBrownianMotionNotGaussian}
	In \cite{peng_Gaussian_2011} it is mentioned that in contrast to the classical situation, a $G$-Brownian motion is not a \blue{$G$-}Gaussian process although each increment of a $G$-Brownian motion is \blue{$G$-}normally distributed.
\end{remark}
}

{We now construct a $d$-dimensional $G$-Brownian motion.}
We denote by $\Omega:=C_0(\mathbb{R}_+,\mathbb{R}^d)$ the space of all $\mathbb{R}^d$-valued continuous paths $\omega=(\omega_t)_{\blue{t \ge 0}}$ with $\omega_0=0$ and equip this space with the distance
\begin{equation} \label{eq:Distance}
\rho(\omega^1,\omega^2):=\sum_{i=1}^{\infty}2^{-i} \left[ \left( \max_{t \in [0,i]} \left \vert \omega_t^1 - \omega_t^2 \right \vert \right) \wedge 1 \right].
\end{equation}
For every fixed $T \in [0,\infty)$, we define $\Omega_T:=\lbrace \omega_{\cdot \wedge T}: \omega \in \Omega \rbrace$.
We set $\mathcal{F}:=\mathcal{B}(\Omega)$ and $\mathcal{F}_T:=\mathcal{B}(\Omega_T)$ which denotes the corresponding Borel $\sigma$-algebras. Moreover, the canonical process $B=(B_t)_{t\geq 0}$ on $\Omega$ is given by $B_t(\omega):=\omega_t$ for $\omega \in \blue{\Omega}, t \geq 0$.  
\begin{defi}
For each fixed $T \in [0, \infty)$ we introduce $Lip(\Omega_T)$ as
\begin{equation*}
	Lip(\Omega_T):=\left \lbrace \varphi (B_{t_1 \wedge T},...,B_{t_n \wedge T}): n \in \mathbb{N}, t_1,...,t_n \in [0,\infty), \varphi \in C_{l,\blue{lip}}(\mathbb{R}^{d\times n}) \right \rbrace.
\end{equation*}
We also set
\begin{equation*}
	Lip(\Omega):=\bigcup_{n=1}^{\infty} Lip(\Omega_n).
\end{equation*}
 \end{defi}
Next we introduce the definition of $G$-expectation and conditional $G$-expectation. 
\begin{defi}
A \emph{$G$-expectation} $\hat{\mathbb{E}}$ is a sublinear expectation on $(\Omega,Lip(\Omega))$  defined as follows: For $X \in Lip(\Omega)$ of the form 
\begin{equation}\label{eq:Xpeng}
	X=\varphi\left(B_{t_1}-B_{t_0},...,B_{t_n}-B_{t_{n-1}}\right), \quad 0 = t_0 < t_1<...<t_n <\infty, \quad \blue{n \in \mathbb{N}}, 
\end{equation}
for some $\varphi \in C_{\blue{l,lip}}(\mathbb{R}^{d \times n})$ we set 
\begin{equation*}
	\hat{\mathbb{E}}[X]:={\tilde{\mathbb{E}}}\left[\varphi\left(\xi_1 \sqrt{t_1-t_0},...,\xi_n \sqrt{t_n - t_{n-1}
	}\right)\right],
\end{equation*}
where $\xi_1,...,\xi_n$ are $d$-dimensional random variables on a sublinear expectation space $(\tilde{\Omega},\tilde{\mathcal{H}}, {\tilde{\mathbb{E}}})$ such that for each $i=1,...,n$, $\xi_i$ is $G$-normally distributed and independent of $(\xi_1,...,\xi_{i-1})$.
\end{defi}
{Then the canonical process} $B=(B_t)_{\blue{t \ge 0}}$ is a $G$-Brownian motion on the sublinear expectation space $\blue{(\Omega, Lip(\Omega)}, \hat{\mathbb{E}})$. 
{\begin{remark}
	Note that the elements in $Lip(\Omega)$ take values in $\mathbb{R}$.
\end{remark}}
\begin{defi}
The space $L_G^p(\Omega)$ is the completion of $Lip(\Omega)$ under the norm $$\| \xi \|_p=\left(\hat{\mathbb{E}}[\vert \xi \vert^p]\right)^{{1}/{p}}, \quad p \geq 1.$$
Similarly, we can also define $L_G^p(\Omega_T)$ for $p \geq 1$ and $T \in [0,\infty)$.	
\end{defi}

\begin{defi}
	Let $X \in Lip(\Omega)$ have the representation \blue{\eqref{eq:Xpeng}}.
Then \blue{its} \emph{$G$-conditional expectation} under $\Omega_{t_j}$ is defined as
\begin{equation*}
	\hat{\mathbb{E}}\left[X \vert \Omega_{t_j}\right]:= \psi \left(B_{t_1}-B_{t_0},B_{t_2}-B_{t_1},...,B_{t_j}-B_{t_j-1}\right), \quad j=1,\dots,n-1,
\end{equation*}
where
\begin{equation*}
	\psi(x):=\hat{\mathbb{E}}\left[\varphi \left(x,B_{t_j+1}-B_{t_j},...,B_{t_n}-B_{t_n-1}\right)\right].
\end{equation*}
As for any $ T< \infty$, $Lip(\Omega_T) \subseteq Lip(\Omega)$ we can also consider $\hat{\mathbb{E}}[\cdot], \hat{\mathbb{E}}[\cdot \vert \Omega_t]$ as functions on $Lip(\Omega_T)$ for $t \leq T$.

\end{defi}
The $G$-expectation and the $G$-conditional expectation can be extended to sublinear operators $$\hat{\mathbb{E}}[\cdot]:L_G^p(\Omega) \to \mathbb{R}$$ and $$\hat{\mathbb{E}}[\cdot \vert \Omega_t]: L_G^1(\Omega) \to L_G^1(\Omega_t),$$
see Section 3.2 in \cite{peng_nonlinearExpectation_book}. Furthermore, $\hat{\mathbb{E}}[\cdot], \hat{\mathbb{E}}[\cdot \vert \Omega_t]$ can also be considered on the space $L_G^1(\Omega_T)$ for any $T \in [0,\infty)$ \blue{and $t \leq T$}.\\
In the following definitions we introduce {some} spaces we will work with {in the sequel}.
\begin{defi} \label{DefinitionSimpleIntegrands}
\begin{enumerate}
\item For $p \geq 1$ we denote by $M^{p,0}_G(0,T)$ the space of \emph{simple integrands}. Specifically, we define an element $\eta \in M^{p,0}_G(0,T)$ by
\begin{equation} \label{eq:SimpleFunction}
	\eta_t(\omega)=\sum_{j=0}^{N-1} \xi_j(\omega) \textbf{1}_{[t_j, t_{j+1})}(t),
\end{equation}
\blue{for $N \in \mathbb{N}$, where $\xi_i \in L_G^p(\Omega_{t_i}), i=0,...,N-1$ and $0=t_0<t_1<...<t_N=T$}.
\item For $p \geq 1$ we let $M_G^p(0,T)$ be the completion of $M^{p,0}_G(0,T)$ under the norm $$\left(\hat{\mathbb{E}}\left[\int_0^T \vert \eta_t \vert^p dt\right]\right)^{1/p}.$$
\end{enumerate}
\end{defi}
Similar as in the classical It\^{o} case, the integral with respect to a one-dimensional $G$-Brownian motion $B$ is first defined for the simple integrands in \blue{$M_G^{2,0}(0,T)$} by the mapping 
\blue{\begin{equation*}
	I:M_G^{2,0}(0,T) \to L_G^2(\Omega_T), \quad I(\eta)\blue{:=}\greenNew{\sum_{j=0}^{N-1} \eta_j\left( B_{t_{j+1}}-B_{t_j}\right),}
\end{equation*}
\greenNew{for $\eta$ as in \eqref{eq:SimpleFunction}}.}
Then this mapping can be continuously extended to 
$$I:  M_G^{2}(0,T) \to L_G^2(\Omega_T),$$ see Lemma 3.3.4 in \cite{peng_nonlinearExpectation_book}. In order to define the integral with respect to a $d$-dimensional $G$-Brownian motion we proceed as follows. For a fixed $\textbf{a} \in \mathbb{R}^d$, we define the one-dimensional process $B^{\textbf{a}}:=(B^{\textbf{a}}_t)_{t \geq 0}$ \blue{by} 
\begin{equation} \label{eq:DefinitonScalarGBrownianMotion}
	B^{\textbf{a}}_t:=\langle \textbf{a}, B_t \rangle, \quad t \geq 0.
\end{equation}
Then $B^{\textbf{a}}$ is a one-dimensional $G_{\textbf{a}}$-Brownian motion with 
\begin{equation} \label{eq:FunctionGScalar}
	G_{\textbf{a}}(\alpha)= \frac{1}{2}\left( \sigma^2_{\textbf{a}\textbf{a}^T} \alpha^+ - \sigma^2_{-\textbf{a}\textbf{a}^T}\alpha^-  \right),
\end{equation}
where 
\begin{align} \label{eq:DefinitionBoundariesVolatilityScalar}
	\sigma_{\textbf{a}\textbf{a}^T}^2= 2 G\left( \textbf{a}\textbf{a}^T\right) \quad \text{ and } \quad \sigma_{-\textbf{a}\textbf{a}^T}^2= -2 G\left( -\textbf{a}\textbf{a}^T\right). 
\end{align}
Then, the \blue{$G$-}It\^{o} integral can be defined as
\begin{equation} \label{eq:DefinitionItoScalarProduct}
	I(\eta):= \int_0^T \eta_t dB_t^{\textbf{a}}, \quad \eta \in M_G^2(0,T).
\end{equation}

\begin{defi} \label{defi:QuadraticVariation}
	Let $B=(B_t)_{t \geq 0}$ be a one-dimensional $G$-Brownian motion. Let \blue{$\pi_t^N:=\lbrace t_0^N,...,t_N^N \rbrace$, $N\in \mathbb{N}\greenNew{,}$} be a sequence of partitions of $[0,t]$ \blue{and call} 
	\blue{
	\begin{align} \label{eq:MeshSize}
	\mu(\pi_t^N):=\max \lbrace  \vert t_{i+1}^N-t_{i}^N \vert : i=0,1,...,N-1 \rbrace	
	\end{align}}
the mesh size of $\pi_t^N$, \blue{$N\in \mathbb{N}$}.  The \emph{quadratic variation process of \blue{a} $G$-Brownian motion} $B$, denoted by $\langle B \rangle=(\langle B \rangle_t)_{t \geq 0}$, is given by
	$$
	\langle B \rangle_t := \lim_{\mu(\pi_t^N) \to 0} \sum_{j=0}^{N-1} \left(B_{t_{j+1}^N}-B_{t_{j}^N} \right)^2=B_t^2 - 2 \int_0^t B_s dB_s, \quad t \in [0,T].
	$$
\end{defi}
Note that the process $\langle B \rangle$ is not deterministic except for the classical case, where $B$ is a classical Brownian motion. In particular, the quadratic variation process contains the part of statistic uncertainty of \blue{a} $G$-Brownian motion. The integral with respect to the quadratic variation $\int_0^{\cdot} \eta_s d \langle B \rangle_s$ is introduced first for $\eta \in M_G^{1,0}(0,T)$ and then extended to $M_G^1(0,T)$, see Section 3.4 in \cite{peng_nonlinearExpectation_book}.

The quadratic variation process of a $d$-dimensional $G$-Brownian motion \blue{takes} values in $\mathbb{S}(d)$, and in particular it is defined as {the matrix $\langle B \rangle$ with components}
\begin{equation} \label{eq:QuadraticVariationD}
{[\langle B \rangle_t]_{i,j}}:=\langle B(i), B(j) \rangle_{t}, \quad t \ge 0, \quad {i,j=1,...,d,}
\end{equation}
where $B(i)=(B(i)_t)_{t \geq 0}$ denotes the $i$th component of \blue{a} $G$-Brownian motion. Note that $B(i)_t=B^{\textbf{e}_i}_t$ with $\textbf{e}_i=(\textbf{e}_i(j))_{j=1,...,d} \in \mathbb{R}^d$ with $\textbf{e}_i(j)=\delta_i(j)$, where $\delta_i$ denotes the Dirac delta for $i$.
In particular, {we can define for each $i,j=1,...,d$ and $t \geq 0$}
\begin{equation} \label{eq:CovariationMatrixGBrownianmotionD}
	\langle B(i),B(j) \rangle_t:=\frac{1}{4}\left( \langle B(i)+B(j) \rangle_t -\langle B(i)-B(j) \rangle_t \right),
\end{equation}
{see page 64 of \cite{peng_nonlinearExpectation_book}.}
 For any $t \geq 0$ it holds 
\begin{equation}
	\hat{\mathbb{E}} \left[ \langle A B_t, B_t \rangle \right]=2G(A) t \quad \text{for } A \in \mathbb{S}(d),
\end{equation}
see page {70} in \cite{peng_nonlinearExpectation_book}. Thus, 
\begin{equation}
	\hat{\mathbb{E}} \left[ B_t(i)B_t(j) \right]=\blue{2}G(\bar{A}) t =t  \sup_{\gamma \in \Theta} \gamma_{ij}, \label{eq:QuadraticVariationD_new}
\end{equation}
with $\bar{A}=(\bar{a}_{kl})_{k,l=1,...,d}$ such that $\bar{a}_{kl}=0$ for $(k,l) \blue{\notin} \lbrace(i,j),(j,i) \rbrace$ and $\bar{a}_{ij}=\bar{a}_{ji}=\blue{\frac{1}{2}}$. \blue{Moreover,} $\Theta$ is defined in \eqref{G-equationOneDimension}.

\begin{defi}
A one-dimensional process $(M_t)_{t \in [0,T]}$ is called a \emph{$G$-martingale} (respectively, \blue{\emph{$G$-supermartingale, $G$-submartingale}}) if for each $t \in [0,T]$, $M_t \in L_G^1(\Omega_t)$ and for each $0 \leq s \leq t \leq T$, we have
\begin{equation*}
	\hat{\mathbb{E}}[M_t\vert \Omega_s]=M_s, \quad \text{(respectively, } \leq M_s, \quad \geq M_s \text{).}
\end{equation*}
If in addition it holds also 
\begin{equation*}
	\hat{\mathbb{E}}[-M_t \vert \Omega_s]=-M_s \quad\text{ for }0\leq s \leq t \leq T,
	\end{equation*}
then $(M_t)_{t \in [0,T]}$ is called a \emph{symmetric} $G$-martingale.
\end{defi}
{

It is shown in Theorem 6.2.5 in \cite{peng_nonlinearExpectation_book} that the $G$-expectation is an upper expectation, \blue{i.e.} there exists a weakly compact set of probability measures $\mathcal{P}\subseteq \mathcal{P}(\Omega)$ such that
\begin{equation} \label{eq:GExpectationUpperExpectation}
	\hat{\mathbb{E}}[X]= \sup_{P \in \mathcal{P}} {E}^P[X], \quad \text{ for each } X \in L_G^1(\Omega_T).
\end{equation}
Related to this set $\mathcal{P}$ we introduce the Choquet capacity defined by
\begin{equation}
	c(A):=\sup_{P \in \mathcal{P}} P(A), \quad A \in \mathcal{B}(\Omega_T). \label{eq:capacity}
\end{equation}
A set $A \subset \mathcal{B}(\Omega)$ is said to be ($\mathcal{P}$)-\emph{polar}, if $c(A)=0$. 
We say that a property holds \emph{quasi-surely} or for \emph{quasi all }$\omega \in \Omega$} if it holds outside a polar set.

\blue{
\begin{remark} \label{remark:ExplicitConstruction}
An explicit construction of the set of probability measures $\mathcal{P}$ in \eqref{eq:GExpectationUpperExpectation} is provided in Chapter 6.2.2 in \cite{peng_nonlinearExpectation_book}. Let $(\Omega, \mathcal{F}, P)$ be a probability space with a standard $d$-dimensional Brownian motion $W=(W_t)_{t \geq 0}$ and denote by $\mathbb{F}$ the natural filtration of $W$. For a given $\Theta \subseteq \mathbb{S}(d)$ as in \eqref{G-equationOneDimension}, call $\mathcal{A}_{0,\infty}^{\Theta}$ the set of all $\Theta$-valued $\mathbb{F}$-adapted processes on $[0,\infty)$. For each $\theta \in  \mathcal{A}_{0,\infty}^{\Theta}$  denote by $P^{\theta}$ the law of a stochastic integral $\int_0^{\cdot} \theta_s dW_s$. Next, define the sets 
\begin{align} \nonumber
\mathcal{P}_1:= \lbrace P^{\theta}: \theta \in  \mathcal{A}_{0,\infty}^{\Theta} \rbrace \quad \text{ and } \quad \mathcal{P}:= \overline{\mathcal{P}}_1,
\end{align}
where the closure is taken in the weak topology. In particular, $\mathcal{P}_1$ is tight, $\mathcal{P}$ is weakly compact and 
\greenNew{
\begin{equation*}
	\hat{\mathbb{E}}[X]= \sup_{P \in \mathcal{P}} {E}^P[X]= \sup_{P \in \mathcal{P}_1} {E}^P[X], \quad \text{ for each } X \in L_G^1(\Omega_T).
\end{equation*}}
\end{remark}
}

\section{Multi-dimensional fractional $G$-Brownian motion {($G$-fBm)}} \label{sec:Multi-DimensionalGfBm}
We now aim to define a multi-dimensional fractional Brownian motion in the framework of the $G$-expectation. In the literature, a one-dimensional fractional Brownian motion under volatility uncertainty {is introduced the first time in} \cite{fBrownianMotionGSetting1} and \cite{chen_2013}. We briefly discuss these two approaches. \\ In \cite{chen_2013} \blue{a one-dimensional} fractional $G$-Brownian motion {$\tilde{B}^H=(\tilde{B}^H)_{t \geq 0}$} with Hurst index $H \in (0,1)$ \blue{sublinear expectation space $(\Omega, \mathcal{H}, \hat{\mathbb{E}})$} is defined as a $G$-Gaussian process such that $\tilde{B}^H_0:=0$ and for all $t,s \geq 0$ it holds
\begin{align}
	\hat{\mathbb{E}}\left[\tilde{B}_t^H \tilde{B}^{H}_s\right]&=\frac{1}{2} \overline{\sigma}^2 \left( t^{2H}+s^{2H}- \vert t- s \vert^{2H} \right), \label{eq:CovarianceChen1} \\
	-\hat{\mathbb{E}}\left[-\tilde{B}_t^H \tilde{B}^{H}_s\right]&=\frac{1}{2} \underline{\sigma}^2 \left( t^{2H}+s^{2H}- \vert t- s \vert^{2H} \right)\blue{,}\label{eq:CovarianceChen2}
\end{align}
\blue{where $\underline{\sigma}^2:=-\mathbb{E}[-(\tilde{B}^H_1)^2]$ and $\overline{\sigma}^2:=\mathbb{E}[(\tilde{B}^H_1)^2]$.}
This definition corresponds to a straightforward {extension} of the definition of a classical fractional Brownian motion to the $G$-setting, but contains some mathematical problems, as also pointed out in \cite{fBrownianMotionGSetting1}. In particular, the definition of a $G$-Gaussian process in \cite{chen_2013} is not in line with the one introduced by Peng in \cite{peng_Gaussian_2011}, see Definition \ref{defi:G-Gaussian} and Remark \ref{remark:GBrownianMotionNotGaussian}. Furthermore, {the} moving average representation of \blue{a} fractional $G$-Brownian motion with respect to a two-sided $G$-Brownian motion {of} Theorem 1 in \cite{chen_2013}
{requires the existence of} a two-sided $G$-Brownian motion $B^{\textnormal{2-sided}}:=(B_{\blue{t}}^{\textnormal{2-sided}})_{t \in \mathbb{R}}$ given by \begin{align} \label{eq:DefinitionTwoSidedBm}
	B_t^{\textnormal{2-sided}}:=\begin{cases}
		B_t^{(1)} & \text{if} \quad t \geq 0,  \\
		B_{-t}^{(2)} &\text{if} \quad t <0,
\end{cases}
\end{align} where $B^{(1)}=(B^{(1)}_t)_{t \geq 0}$ and $B^{(2)}=(B^{(2)}_t)_{t \geq 0}$ are two independent $G$-Brownian motions, see Definition 9 in \cite{chen_2013}. To the best of our knowledge {the} definition of {independent stochastic processes} in the $G$-setting {is very delicate, as it is not mutual.} By Definition \ref{defi:independenceRandomVariables} we have to distinguish if a random vector $X$ is independent of \blue{a} random vector $Y$ or if $Y$ is independent of $X$. In general, {the fact that} $X$ is independent of $Y$ {does not imply} that $Y$ is independent of $X$, see e.g. Example 1.3.15 in \cite{peng_nonlinearExpectation_book} for a counterexample. {This extends to the following definition of distributional independence.
\begin{defi} \label{defi:distributionallyIndependence}
Let $(X_t)_{t \geq 0}$ and $(Y_t)_{t \geq 0}$ be two stochastic processes \blue{on} a nonlinear expectation space {$(\Omega, \mathcal{H}, \mathbb{E})$}. \greenNew{We say that $(Y_t)_{t \geq 0}$ is} \emph{distributionally independent} of $(X_t)_{t \geq 0}$ if, for each $\underline{t}=(t_1,...,t_n) \in \mathcal{T}$, $(Y_{t_1},...,Y_{t_n})$ is independent of $(X_{t_1},...,X_{t_n})$, where $\mathcal{T}$ is defined in \eqref{eq:VectorTimes}.
\end{defi}
} 
One might interpret the independence of $B^{(1)}=(B_t^{(1)})_{t \geq 0}$ and $B^{(2)}=(B_t^{(2)})_{t \geq 0}$ in the sense that $B^{(1)}$ is distributionally independent of $B^{(2)}$ and vice versa, i.e. $B^{(2)}$ is distributionally independent of $B^{(1)}$. However, this is not possible as we prove next.
		\begin{lemma} \label{lemma:IndependentGBrownianMotion}
			Let $(\Omega, \mathcal{H}, \hat{\mathbb{E}})$ be a sublinear expectation space \greenNew{endowed} with two one-dimensional $G$-Brownian motions $B^{(1)}=(B_t^{(1)})_{t \geq 0}$ and $B^{(2)}=(B_t^{(2)})_{t \geq 0}$ \blue{such that \linebreak $ -\hat{\mathbb{E}}[-(B^{(1)}_1)^2]=-\hat{\mathbb{E}}[-(B^{(2)}_1)^2]=\underline{\sigma}^2 \neq \overline{\sigma}^2= \hat{\mathbb{E}}[(B^{(1)}_1)^2]= \hat{\mathbb{E}}[(B^{(2)}_1)^2]$}. Then it is not possible that $B^{(1)}$ is distributionally independent of $B^{(2)}$ and $B^{(2)}$ is distributionally independent of $B^{(1)}$.
		\end{lemma}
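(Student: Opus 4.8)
The plan is to argue by contradiction, assuming that $B^{(1)}$ is distributionally independent of $B^{(2)}$ and, simultaneously, $B^{(2)}$ is distributionally independent of $B^{(1)}$. The essential idea is that under these two independence assumptions the pair $(B^{(1)}_1, B^{(2)}_1)$ becomes, for each choice of order of conditioning, a two-dimensional random vector whose law is forced to be $G$-normal with a covariance uncertainty set of a very specific product form; I will then compute the sublinear expectation of the symmetric quadratic quantity $\langle (B^{(1)}_1)^2 + (B^{(2)}_1)^2 \rangle$-type functionals in the two admissible orders and show the two computations are incompatible with $\underline{\sigma}^2 \neq \overline{\sigma}^2$.

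\textbf{Key steps.} First I would fix time $t=1$ and work only with the random variables $\xi := B^{(1)}_1$ and $\eta := B^{(2)}_1$; both are $G$-normally distributed one-dimensional random variables with variance uncertainty interval $[\underline{\sigma}^2, \overline{\sigma}^2]$, by Definition \ref{defi:G-BrownainMotion} together with the definition of a one-dimensional $G$-Brownian motion applied to each component. Second, under the hypothesis that $\eta$ is distributionally independent of $\xi$, the vector $(\xi,\eta)$ is $G$-normally distributed in the sense of Definition \ref{defi:G-NormalDistribution}: indeed independence plus each marginal being $G$-normal yields the stability relation $a(\xi,\eta) + b(\bar\xi,\bar\eta) \sim \sqrt{a^2+b^2}(\xi,\eta)$ for independent copies, so one can read off the associated $2$-dimensional function $G$ via \eqref{G-equationOneDimension_1} and its uncertainty set $\Theta \subseteq \mathbb{S}(2)$. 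Third — and this is the crux — I would compute $\Theta$ explicitly. Choosing a test function that isolates the off-diagonal behaviour, e.g. $\varphi(x,y) = xy$ and more refined quadratic polynomials, and using the iterated-expectation formula of Definition \ref{defi:independenceRandomVariables} in the order dictated by ``$\eta$ independent of $\xi$'', one finds $\hat{\mathbb{E}}[\xi\eta] = \hat{\mathbb{E}}[\xi\,\hat{\mathbb{E}}[\eta]] = 0$ and, more importantly, evaluating $\hat{\mathbb{E}}[(\lambda\xi+\mu\eta)^2]$ by first integrating out $\eta$ forces the covariance set to be the ``box'' $\Theta = \{\mathrm{diag}(\gamma_1,\gamma_2): \gamma_i \in [\underline{\sigma}^2,\overline{\sigma}^2]\}$ or at least something asymmetric in how it treats the two orders. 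Fourth, repeating the same computation under the symmetric hypothesis ``$\xi$ independent of $\eta$'' produces a second representation of the law of $(\xi,\eta)$, hence a second uncertainty set $\Theta'$, and the two must coincide since they describe the same random vector. Fifth, I would exhibit a single quadratic functional — the natural candidate is $\hat{\mathbb{E}}[(\xi+\eta)^2] + \hat{\mathbb{E}}[-(\xi-\eta)^2]$ or a variant built to detect the sign of $\gamma_{12}$ at its extreme — whose value computed by integrating out $\eta$ first differs from its value computed by integrating out $\xi$ first, the discrepancy being a nonzero multiple of $(\overline{\sigma}^2-\underline{\sigma}^2)$. This contradicts the well-definedness of $\hat{\mathbb{E}}$ and completes the proof.

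\textbf{Main obstacle.} The delicate point is the third step: carefully extracting the uncertainty set $\Theta$ from the two iterated-expectation identities without implicitly assuming the symmetry of independence that we are trying to refute. Concretely, I expect the argument to hinge on the fact that in Peng's framework, when $Y$ is independent of $X$ one has $\hat{\mathbb{E}}[\varphi(X,Y)] = \hat{\mathbb{E}}[\,\hat{\mathbb{E}}[\varphi(x,Y)]\big|_{x=X}\,]$ — the \emph{later} variable is integrated \emph{first} — so that a function which is convex in one argument and concave in the other is treated differently depending on which independence one invokes. The technical care needed is to choose the test function $\varphi$ precisely so that this convex/concave asymmetry is activated and produces a strictly positive gap proportional to $\overline{\sigma}^2 - \underline{\sigma}^2$; a function like $\varphi(x,y) = (x+y)^2 \wedge c - (x-y)^2$ for a suitable constant, or a smooth $C_{l,lip}$ approximation thereof, should do. Everything else — the appeal to Definition \ref{defi:distributionallyIndependence}, restricting to $t=1$, and the final contradiction — is routine once this computation is set up correctly.
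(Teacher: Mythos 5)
Your overall strategy (derive a contradiction by evaluating one well-chosen test function in the two orders of iterated conditioning) is a legitimate route, and it is genuinely different from the paper's proof, which is a one-liner: the paper reduces to a fixed time $t$, notes that $B^{(1)}_t$ and $B^{(2)}_t$ are $G$-normally distributed and mutually independent, and invokes Theorem 15 of Hu and Li (2014), which forces mutually independent random variables to be maximally distributed -- incompatible with a $G$-normal law having $\underline{\sigma}^2\neq\overline{\sigma}^2$. However, as written your argument has a genuine gap precisely at the step you flag as the crux. All the concrete functionals you propose are quadratic, and quadratic functionals cannot detect the order of conditioning here: for any $\lambda,\mu$, using that $\hat{\mathbb{E}}[\pm\xi]=\hat{\mathbb{E}}[\pm\eta]=0$ the cross term $2\lambda\mu\, x\eta$ (resp.\ $2\lambda\mu\,\xi y$) has zero sublinear mean in both directions and drops out of the inner expectation, so $\hat{\mathbb{E}}[(\lambda\xi+\mu\eta)^2]=\lambda^2\overline{\sigma}^2+\mu^2\overline{\sigma}^2$ and $\hat{\mathbb{E}}[-(\xi-\eta)^2]=-2\underline{\sigma}^2$ \emph{whichever} variable is integrated out first; more generally every functional of the form $\hat{\mathbb{E}}[\langle A(\xi,\eta),(\xi,\eta)\rangle]$, $A\in\mathbb{S}(2)$, gives the same value in both orders. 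Hence your Step 5, built from $(\xi+\eta)^2$ and $(\xi-\eta)^2$, cannot produce a discrepancy proportional to $\overline{\sigma}^2-\underline{\sigma}^2$, and no ``uncertainty set'' extracted from second moments can either. In addition, your Step 2 claim that one-sided independence plus $G$-normal marginals makes the pair $(\xi,\eta)$ jointly $G$-normal is unjustified: the stability property of Definition \ref{defi:G-NormalDistribution} does not follow from one-sided independence (this is exactly the delicacy behind Remark \ref{remark:IndependenceGBm}), and nothing in your argument actually needs it.

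The repair is to use a test function of mixed parity, cubic rather than quadratic, e.g.\ $\varphi(x,y)=xy^2\in C_{l,lip}(\mathbb{R}^2)$. If $\xi$ is independent of $\eta$, then $\hat{\mathbb{E}}[\xi\eta^2]=\hat{\mathbb{E}}\big[y^2\hat{\mathbb{E}}[\xi]\big|_{y=\eta}\big]=0$. If instead $\eta$ is independent of $\xi$, the inner expectation is $\hat{\mathbb{E}}[x\eta^2]=x^+\overline{\sigma}^2-x^-\underline{\sigma}^2=\underline{\sigma}^2x+(\overline{\sigma}^2-\underline{\sigma}^2)x^+$, whence $\hat{\mathbb{E}}[\xi\eta^2]\geq(\overline{\sigma}^2-\underline{\sigma}^2)\hat{\mathbb{E}}[\xi^+]>0$, since $\hat{\mathbb{E}}[\xi^+]=\overline{\sigma}\,E[Z^+]>0$ for a $G$-normal $\xi$ with $\overline{\sigma}^2>\underline{\sigma}^2\ge 0$. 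Assuming both independencies simultaneously thus makes the single number $\hat{\mathbb{E}}[\xi\eta^2]$ equal to both $0$ and a strictly positive quantity, which is the desired contradiction; this is exactly the convex/concave (here: odd-in-$x$, convex-in-$y$) asymmetry you describe, but activated by a cubic rather than by the quadratic candidates you list. With this substitution your blind proof becomes a self-contained alternative to the paper's citation of Hu--Li; without it, the argument does not close.
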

		\begin{proof}
			Let $B^{(1)}=(B_t^{(1)})_{t \geq 0}$ and $B^{(2)}=(B_t^{(2)})_{t \geq 0}$ be two one-dimensional $G$-Brownian motions on the sublinear expectation space {$(\Omega, \mathcal{H}, \hat{\mathbb{E}})$}. Assume now that $B^{(1)}$ is distributionally independent of $B^{(2)}$ and $B^{(2)}$ is distributionally independent of $B^{(1)}$. Fix $t >0$. Then, by Definition \ref{defi:distributionallyIndependence} the random variable $B_t^{(1)}$ is independent of the random variable $B_t^{(2)}$ and {$B_t^{(2)}$ is independent of $B_t^{(1)}$}, both in the sense of Definition \ref{defi:independenceRandomVariables}. Moreover, $B_t^{(1)}$ as well as $B_t^{(2)}$ are $G$-normally distributed. However, this is a contradiction to Theorem 15 in \cite{hu_li_2012} which states that $B_t^{(1)}$ and $B_t^{(2)}$ need to be maximally distributed in the sense of Definition \ref{defi:G-MaximalDistribution}, whenever $B_t^{(1)}$ is independent of $B_t^{(2)}$ as well as $B_t^{(2)}$ is independent of $B_t^{(1)}$. 		\end{proof}
		\begin{remark} \label{remark:IndependenceGBm}
		Lemma \ref{lemma:IndependentGBrownianMotion} is also in line with \cite{peng_multidimensional_2008}, which {states} that it is in general not possible to find a system of coordinates under which the corresponding components $B(i), i=1,...,d{,}$ of a $d$-dimensional $G$-Brownian motion $B=(B(1),...,B(d))$ are mutually independent \blue{of} each other.
		\end{remark}
{A solution to this problem} is provided in Definition 4 in \cite{fBrownianMotionGSetting1}. 
\begin{defi}\label{def:fBrownianMotionSetting1}
	Let $H \in (0,1)$. Then a continuous stochastic process $(\tilde{B}^H_t)_{t \geq 0}$ on a sublinear expectation space {$(\Omega, \mathcal{H}, \hat{\mathbb{E}})$} is called a \emph{one-dimensional fractional $G$-Brownian motion} \blue{of} Hurst \blue{index} $H$, if 
	\begin{enumerate}
		\item $\tilde{B}^H_0=0$, and for all $t \geq 0$ it holds $ 
			-\hat{\mathbb{E}}\left[-\tilde{B}^H_t\right]=\hat{\mathbb{E}}\left[\tilde{B}^{H}_t\right]=0$\blue{;}
		\item For all $t,s \geq 0$, \blue{equations \eqref{eq:CovarianceChen1} and \eqref{eq:CovarianceChen2} hold with}
		with $\underline{\sigma}^2=-\hat{\mathbb{E}}\left[-\tilde{B}^{H}_1\right]$ and $\overline{\sigma}^2=\hat{\mathbb{E}}\left[\tilde{B}^{H}_1\right]$\blue{;}
		\item For each $t,s \geq 0, \ \tilde{B}^{H}_{t+s}-\tilde{B}^{H}_s$ and $\tilde{B}^{H}_t$ are identically distributed.
		\item $\lim_{t \to 0} \hat{\mathbb{E}} \left[\vert \tilde{B}^{H  }_t\vert^m\right]t^{-2H}=0$ for each $m \in \mathbb{N}$ and $m \geq 3$.
	\end{enumerate}
\end{defi}
{T}he existence of such a process given in Definition \ref{def:fBrownianMotionSetting1} is proved in Theorem 2 in \cite{fBrownianMotionGSetting1} {choosing $\Omega=C_0(\mathbb{R}_+,\mathbb{R})$ and $\mathcal{H}=Lip(\Omega)$.} {The authors also derive a representation theorem of \blue{a} fractional $G$-Brownian motion as an integral of a stochastic kernel and a volatility function with respect to a standard Brownian motion.}  However, it is mentioned in \cite{fBrownianMotionGSetting1} that the studied representation is not unique, as \blue{a} classical fractional Brownian motion can be represented in at least three different ways as an integral with respect to a standard Brownian motion.
Besides this\blue{,} it is shown that the fractional $G$-Brownian motion in Definition \ref{def:fBrownianMotionSetting1} satisfies properties such as self-similarity and long-range dependence, see \cite{fBrownianMotionGSetting1} for more details. \\
\\
{Our} goal is now to introduce a mathematically rigorous definition of a multi-dimensional fractional $G$-Brownian motion. At first sight, this seems to be straightforward by {extending} Definition \ref{def:fBrownianMotionSetting1}. However, some further considerations {are} necessary for the multi-dimensional case. First, by Remark \ref{remark:IndependenceGBm} the components of a multi-dimensional $G$-Brownian motion are no any longer independent, in contrast to the ones of a standard Brownian motion, {as we can see by looking at the quadratic variation process of a $d$-dimensional $G$-Brownian motion, see \eqref{eq:QuadraticVariationD}-\eqref{eq:QuadraticVariationD_new}.} Thus, the components of a multi-dimensional fractional $G$-Brownian motion {cannot be assumed to be}  independent, as in the classical case. Therefore, we need to take into account the covariance structure of the components of \blue{a} multi-dimensional fractional $G$-Brownian which stems from the covariance structure of a multi-dimensional $G$-Brownian motion. In order to capture this property we define a multi-dimensional fractional $G$-Brownian motion as an integral with respect to a multi-dimensional $G$-Brownian motion, by generalizing the Volterra representation of a standard fractional Brownian motion, see {Proposition 2.5 of \cite{nourdin_fractional_brownian_motion}}. The choice for the Volterra representation is particularly motivated as the representation with a two-sided Brownian motion yields several mathematical difficulties as discussed before, see Lemma \ref{lemma:IndependentGBrownianMotion}. Moreover, at the end of this section we show that Definition {\ref{def:FractionalGBrownianMotionOurDDimension}} of a $d$-dimensional fractional $G$-Brownian motion is in line with Definition \ref{def:fBrownianMotionSetting1} introduced in \cite{fBrownianMotionGSetting1}, see Lemma \ref{lemma:DefinitionCorrespondsD1}. 

\subsection{Definition of a \blue{multi}-dimensional {$G$-fBm} and first properties}

{From now on, we {set} $\Omega:=C_0(\mathbb{R}_+,\mathbb{R}^d)$, for $d \in \mathbb{N}$, and $\mathcal{H}:=Lip(\Omega)$. We equip $\Omega$ with the Borel $\sigma$-algebra $\mathcal{F}:=\mathcal{B}(\Omega)$. For these choices, it is indeed possible to construct a $d$-dimensional $G$-Brownian motion and the $G$-expectation {$\hat{\mathbb{E}}$} as shown in Section \ref{sec:Gsetting}, along with the weakly compact set of probability measures $\mathcal{P}$ satisfying
\eqref{eq:GExpectationUpperExpectation}.
}
{Then}, let $B=(B_t)_{t \geq 0}$ be {the canonical }$d$-dimensional $G$-Brownian motion {on $(\Omega, \mathcal{H}, \hat{\mathbb{E}})$}. 

\begin{defi}\label{def:FractionalGBrownianMotionOurDDimension}
		A \emph{$d$-dimensional fractional $G$-Brownian motion {($G$-fBm)} of Hurst index $H \in \left( 0,1\right)$ with $B^H(i)=(B_t^H(i))_{t \geq 0}$} for $i=1,...,d$ is defined as follows. For $H \in \left( 0,\frac{1}{2}\right) \cup \left( \frac{1}{2},1\right)$ we set \footnote{{Formula \eqref{eq:FractionalGBrownianMotionOurDefintionD} has been given for a fractional Brownian motion in the classical setting in Theorem 5.2 of \cite{norros1999anelementary}.}}
		\begin{equation} \label{eq:FractionalGBrownianMotionOurDefintionD}
		B_t^H(i):=\int_0^t K_H(t,s) dB_s(i),	
		\end{equation}
		where $B=(B(i))_{i=1,...\blue{,}d}$ with $B(i)=(B_t(i))_{t \geq 0}$ for $i=1,...,d$ is a $d$-dimensional $G$-Brownian motion. Here, we set for $t > s >0$ 
	\begin{align}
		c_H&:=\sqrt{\frac{H(2H-1)}{\int_0^1 (1-x)^{1-2H} x^{H-\frac{3}{2}} dx}}  &\text{for }  H>\frac{1}{2}, \label{eq:VolterraConstant1}\\
		d_H&:=\sqrt{\frac{2H}{(1-2H)\int_0^1 (1-x)^{-2H} x^{H-\frac{1}{2}} dx}}  &\text{for }  H<\frac{1}{2}, \label{eq:VolterraConstant2}
	\end{align}
	and 
	\small{\begin{align} \label{eq:KernelVolterra}
	K_H(t,s):=\begin{cases}
		c_H s^{\frac{1}{2}-H} \int_s^t (u-s)^{H-\frac{3}{2}} u^{H-\frac{1}{2}}du& \text{for} \quad H > \frac{1}{2}  \\
			d_H \left[ \left(\frac{t}{s}\right)^{H-\frac{1}{2}}(t-s)^{H-\frac{1}{2}} - \left(H-\frac{1}{2}\right)s^{\frac{1}{2}-H} \int_{s}^t u^{H-\frac{3}{2}}(u-s)^{H-\frac{1}{2}}\right]du &\text{for} \quad H <\frac{1}{2}.
		\end{cases}	
	\end{align}}
		For {the} Hurst index $H=\frac{1}{2}$, we define a fractional $G$-Brownian motion $B^{\frac{1}{2}}=(B^{\frac{1}{2}}_t)_{t \geq 0}$ by
		\begin{equation}  \label{eq:FractionalGBrownianMotionOurDefintionD2}
			B_t^{\frac{1}{2}}:=B_t,
		\end{equation} 
		where $B=(B_t)_{t \geq 0}$ is a $d$-dimensional $G$-Brownian motion.
	\end{defi}	

The integral in \eqref{eq:FractionalGBrownianMotionOurDefintionD} has to be understood in the sense of equation \eqref{eq:DefinitionItoScalarProduct}. More specifically, we denote by $(\textbf{e}_i)_{i=1,...,d}$ the canonical vector basis of $\mathbb{R}^d$, i.e. $\textbf{e}_i=(\textbf{e}_i(j))_{j=1,...,d}$ with $\textbf{e}_i(j)=\delta_i(j)$, where $\delta_i$ denotes the Dirac delta for $i$. By  \eqref{eq:DefinitonScalarGBrownianMotion} we have 
\begin{equation} 
B_t(i)=B_t^{\textbf{e}_i}, \quad t \geq 0, \quad i=1,...,d.	
\end{equation}
Moreover, by \eqref{eq:FunctionGScalar} and \eqref{eq:DefinitionBoundariesVolatilityScalar} we get that $B(i)$ is a one-dimensional $G_i$-Brownian motion with
\begin{equation} \label{eq:DefinitionGComponent}
	G_{i}(\alpha):=G_{\textbf{e}_i \textbf{e}_i^T}= \frac{1}{2}\left( \overline{\sigma}^2_{i} \alpha^+ - \underline{\sigma}^2_{i}\alpha^-  \right), \quad \alpha \in \mathbb{R}^d,
\end{equation}
where 
\begin{align} \label{eq:DefinitionGComponentBoundaries}
	\overline{\sigma}^2_i:= \sigma^2_{\textbf{e}_i\textbf{e}_i^T}= \sup_{\gamma \in \Theta} \gamma_{ii} \quad \text{ and } \quad \underline{\sigma}^2_{i}:= \sigma^2_{\textbf{e}_i\textbf{e}_i^T}= \inf_{\gamma \in \Theta} \gamma_{ii}.
\end{align}
Here, $\Theta \subseteq \mathbb{S}(d)$ is given in \eqref{G-equationOneDimension} and $\gamma=(\gamma_{ij})_{i,j=1,...,d}$.
\begin{remark} \label{remark:IntegralWellDefinedDimension}
Note that \blue{for fixed $t \in [0,T]$} the $G$-It\^{o} integral in \eqref{eq:FractionalGBrownianMotionOurDefintionD} is well-defined as $K_H=(K_H(t,s))_{{0 \le s \le t}} \in M_G^2(0,T)$ for $H \in \left(0,\frac{1}{2}\right) \cup \left( \frac{1}{2},1\right)$. This holds as $K_H$ is deterministic and
\begin{align*}
	\| K_H(t,s) \|_{M_G^2(0,T)} =\left( \hat{\mathbb{E}}\left[ \int_0^T (K_H(t,s))^2ds\right]\right)^{\frac{1}{2}}=
	\left( \int_0^T (K_H(t,s))^2ds\right)^{\frac{1}{2}}<\infty, 
\end{align*}
for any $0 \le s \le t,$ see \blue{Proposition 5.1.3} in \cite{nualart_20006} and Proposition 2.5 in \cite{nourdin_fractional_brownian_motion}. Thus, by Lemma 3.3.4 in \cite{peng_nonlinearExpectation_book}, \blue{$B^H_t \in L_G^2(\Omega_T)$ for every $t \in [0,T]$}.\end{remark}

\blue{We now prove that the $G$-fBm introduced in Definition \ref{def:FractionalGBrownianMotionOurDDimension} satisfies componentwise equations \eqref{eq:CovarianceChen1}-\eqref{eq:CovarianceChen2}. This is a generalization of the well-known covariance property of \blue{a} classical fBm in the $G$-setting. }

\begin{prop} \label{Prop:CovariationComponentwise}
Let $B^H=(B^H_t)_{t \geq 0}$ be a $d$-dimensional fractional $G$-Brownian motion of Hurst index $H \in \left( 0,1\right)$. Let $i=1,...,d.$ It holds $B_0^H(i)=0$, and for each {$t \geq 0$} 
\begin{equation}\label{eq:GfBmCenteredDdimension}
	\hat{\mathbb{E}}\left[B^H_t(i)\right]=-\hat{\mathbb{E}}\left[-B^H_t(i)\right]=0.
\end{equation}
Moreover, for {$s,t>0$} we have
	\begin{align}
			\hat{\mathbb{E}}\left[B^H_s(i)B^H_t(i)\right]=\frac{1}{2} \overline{\sigma}^2_i \left( t^{2H} + s ^{2H}-\vert t -s\vert^{2H}\right), \label{eq:VarianceStructureDDimension1}\\
			-\hat{\mathbb{E}}\left[-B_s^H(i)B_t^H(i)\right]=\frac{1}{2} \underline{\sigma}_i^2 \left( t ^{2H} + s ^{2H}-\vert t -s\vert^{2H}\right), \label{eq:VarianceStructureDDimension2} 
	\end{align}
where $\overline{\sigma}^2_i$ and $\underline{\sigma}^2_i$ are given in \eqref{eq:DefinitionGComponentBoundaries}.
\end{prop}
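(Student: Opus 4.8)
The plan is to reduce everything to the one-dimensional component processes $B^H(i)$ and exploit the fact that $B(i)$ is a one-dimensional $G_i$-Brownian motion, for which $\hat{\mathbb{E}}[(B_1(i))^2] = \overline{\sigma}_i^2$ and $-\hat{\mathbb{E}}[-(B_1(i))^2] = \underline{\sigma}_i^2$, as recorded in \eqref{eq:DefinitionGComponent}--\eqref{eq:DefinitionGComponentBoundaries}. The statement $B_0^H(i) = 0$ is immediate from \eqref{eq:FractionalGBrownianMotionOurDefintionD} since the integral over an empty interval vanishes (and for $H = \tfrac12$ it is just $B_0 = 0$). For the centering property \eqref{eq:GfBmCenteredDdimension}, I would observe that $B_t^H(i) = \int_0^t K_H(t,s)\, dB_s(i)$ is a $G$-Itô integral of a deterministic integrand in $M_G^2(0,T)$ (Remark \ref{remark:IntegralWellDefinedDimension}), hence is a symmetric $G$-martingale started at $0$; therefore $\hat{\mathbb{E}}[B_t^H(i)] = 0$ and $-\hat{\mathbb{E}}[-B_t^H(i)] = 0$. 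Alternatively, one can invoke directly that the $G$-Itô integral of a deterministic $L^2$ kernel has zero $G$-expectation and zero negative $G$-expectation; this is the standard property of $G$-Itô integrals of simple integrands extended by continuity.

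For the covariance identities \eqref{eq:VarianceStructureDDimension1}--\eqref{eq:VarianceStructureDDimension2}, the key point is that for a \emph{single} component $B(i)$, which is a one-dimensional $G_i$-Brownian motion, the $G$-Itô isometry gives, for deterministic $f, g \in M_{G_i}^2(0,T)$,
\begin{equation*}
\hat{\mathbb{E}}\left[ \left( \int_0^T f_s\, dB_s(i) \right)\left( \int_0^T g_s\, dB_s(i) \right) \right] = \overline{\sigma}_i^2 \int_0^T f_s g_s\, ds \quad \text{if } \int_0^T f_s g_s\, ds \ge 0,
\end{equation*}
and analogously with $\underline{\sigma}_i^2$ for the lower expectation, because the quadratic variation $\langle B(i)\rangle_s$ ranges over $[\underline{\sigma}_i^2 s, \overline{\sigma}_i^2 s]$ and the product of two $G$-Itô integrals reduces via the $G$-Itô formula to $\int_0^T f_s g_s\, d\langle B(i)\rangle_s$ plus a symmetric $G$-martingale term with vanishing $G$-expectation. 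Applying this with $f_s = K_H(t,s)\mathbf{1}_{[0,t]}(s)$ and $g_s = K_H(s',s)\mathbf{1}_{[0,s']}(s)$ (taking $t \ge s'$ without loss of generality), the problem collapses to the purely deterministic computation
\begin{equation*}
\int_0^{t \wedge s'} K_H(t,s) K_H(s',s)\, ds = \tfrac12\left( t^{2H} + (s')^{2H} - |t - s'|^{2H} \right),
\end{equation*}
which is exactly the defining property of the Volterra kernel $K_H$ for classical fBm, established in Proposition 2.5 of \cite{nourdin_fractional_brownian_motion} and Proposition 5.1.3 of \cite{nualart_20006}. Since this integrand is nonnegative (the kernels $K_H$ are nonnegative for $H > \tfrac12$; for $H < \tfrac12$ the nonnegativity of the resulting covariance still holds and the sign condition must be checked, but in all cases $t^{2H} + (s')^{2H} - |t-s'|^{2H} \ge 0$), the isometry applies with the stated constants and yields \eqref{eq:VarianceStructureDDimension1}--\eqref{eq:VarianceStructureDDimension2}. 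The case $H = \tfrac12$ is the classical $G$-Brownian case, where $\hat{\mathbb{E}}[B_s(i)B_t(i)] = \overline{\sigma}_i^2 (s \wedge t)$ by \eqref{eq:QuadraticVariationD_new}, matching the formula since $t^{2H} + s^{2H} - |t-s|^{2H} = 2(s\wedge t)$ when $H = \tfrac12$.

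The main obstacle I anticipate is making the "$G$-Itô isometry for products of integrals" rigorous in the sublinear setting: unlike the linear case, $\hat{\mathbb{E}}[XY]$ is not bilinear, so one cannot simply polarize. The correct route is to write $\left(\int_0^T f\, dB(i)\right)\left(\int_0^T g\, dB(i)\right)$ via the $G$-Itô product formula as $\int_0^T \big(\int_0^s f\, dB(i)\big) g_s\, dB_s(i) + \int_0^T \big(\int_0^s g\, dB(i)\big) f_s\, dB_s(i) + \int_0^T f_s g_s\, d\langle B(i)\rangle_s$; the first two terms are symmetric $G$-martingales (being $G$-Itô integrals of integrands in $M_G^2$), so they have zero $G$-expectation, leaving $\hat{\mathbb{E}}\big[\int_0^T f_s g_s\, d\langle B(i)\rangle_s\big]$. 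When $f g \ge 0$ pointwise this equals $\overline{\sigma}_i^2 \int_0^T f_s g_s\, ds$ because $d\langle B(i)\rangle_s \le \overline{\sigma}_i^2\, ds$ with the bound attained, and for the negative expectation one gets $\underline{\sigma}_i^2 \int_0^T f_s g_s\, ds$; this is where the nonnegativity of $K_H(t,\cdot)K_H(s',\cdot)$ (or at least of the integral together with a monotonicity argument) is essential and should be stated carefully. Everything else is the classical Volterra-kernel identity, which can be cited rather than reproved.
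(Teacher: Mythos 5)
Your proposal follows essentially the same route as the paper's proof: the $G$-It\^{o} product formula reduces $\hat{\mathbb{E}}\left[B^H_s(i)B^H_t(i)\right]$ to $\hat{\mathbb{E}}\left[\int_0^{s\wedge t}K_H(s,u)K_H(t,u)\,d\langle B(i)\rangle_u\right]$ (the paper first disposes of the cross term by conditioning on $\Omega_s$ and then applies the product formula on $[0,s]$, while you absorb the cross term directly into the zero-mean stochastic-integral terms), after which the $G$-martingale $\int_0^\cdot \eta_u\,d\langle B(i)\rangle_u-2\int_0^\cdot G_i(\eta_u)\,du$, the pointwise positivity of $K_H$ (which indeed holds also for $H<\tfrac12$, exactly as the paper uses), and the classical Volterra identity yield the constants $\overline{\sigma}_i^2$ and $\underline{\sigma}_i^2$. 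The only detail to tighten is $H=\tfrac12$ at unequal times: \eqref{eq:QuadraticVariationD_new} gives only the equal-time moment, so for $s\neq t$ you need the short conditioning computation the paper carries out (or, equivalently, your own product-formula argument with the kernel $\mathbf{1}_{[0,t]}$).
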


\begin{proof}
		Let $i=1,...,d$, $0<s\leq t$ and $H\in \left( 0,\frac{1}{2}\right) \cup \left(\frac{1}{2},1\right)$. Equation \eqref{eq:GfBmCenteredDdimension} follows directly by page 60 in \cite{peng_nonlinearExpectation_book}. Moreover, we have
	\small{\begin{align}
		\hat{\mathbb{E}}\left[ B_t^H(i) B_s^H(i) \right] 
		&=\hat{\mathbb{E}}\left[ \left(\int_0^t K_H(t,u) dB_u(i) \right) \left(\int_0^s K_H(s,u) dB_u(i) \right) \right] \nonumber  \\
		&=\hat{\mathbb{E}}\left[ \left(\int_0^s K_H(t,u) dB_u(i) + \int_s^t K_H(t,u) dB_u(i)   \right) \left(\int_0^s K_H(s,u) dB_u(i) \right) \right]\nonumber \\
		&=\hat{\mathbb{E}}\bigg[ \left(\int_0^s K_H(t,u) dB_u(i)\right)\left(\int_0^s K_H(s,u) dB_u(i) \right) \notag \\
		&\qquad+ \left(\int_s^t K_H(t,u) dB_u(i)   \right) \left(\int_0^s K_H(s,u) dB_u(i) \right) \bigg] \nonumber \\
		&=\hat{\mathbb{E}}\bigg[ \hat{\mathbb{E}} \bigg[\left(\int_0^s K_H(t,u) dB_u(i)\right)\left(\int_0^s K_H(s,u) dB_u(i) \right) \notag \\ & \qquad + \left(\int_s^t K_H(t,u) dB_u(i)   \right) \left(\int_0^s K_H(s,u) dB_u(i) \right) \bigg \vert \Omega_s \bigg]\bigg], \label{eq:VarianceCalculuation1}
	\end{align}}
	where we use Proposition 3.2.3(v) in \cite{peng_nonlinearExpectation_book} in \eqref{eq:VarianceCalculuation1}.
	Call $X^t=(X^t_s)_{\blue{0 \le s \le t}}$ the process defined by    
	$$
	X^t_s(i):=\int_0^s K_H(t,u) dB_u(i), \quad \blue{0 \le s \le t}.
	$$
	Note that the random variable $X^t_s(i)X^s_s(i)$ is $\Omega_s$-measurable as $K_H(s,u), K_H(t,u)$ are deterministic. Therefore, by \eqref{eq:VarianceCalculuation1} we get 
	\begin{align}
		\hat{\mathbb{E}}\left[ B_t^H(i) B_s^H(i) \right] &=\hat{\mathbb{E}}\left[ X^t_s(i)X^s_s(i)+\hat{\mathbb{E}} \left[ \left(\int_s^t K_H(t,u) dB_u(i)   \right) \left(\int_0^s K_H(s,u) dB_u(i) \right) \bigg \vert \Omega_s \right]\right] \label{eq:VarianceCalculation2} \\
		&=\hat{\mathbb{E}}\bigg[ X^t_s(i)X^s_s(i)+\left(\int_0^s K_H(s,u) dB_u(i) \right)^+\hat{\mathbb{E}} \left[ \int_s^t K_H(t,u) dB_u(i)    \bigg \vert \Omega_s \right] \nonumber  \\
		&\quad \quad +\left(\int_0^s K_H(s,u) dB_u(i) \right)^-\hat{\mathbb{E}} \left[ -\int_s^t K_H(t,u) dB_u(i)    \bigg \vert \Omega_s \right]\bigg] \label{eq:VarianceCalculation3} \\
		&=\hat{\mathbb{E}}\left[X^t_s(i)X^s_s(i)\right]. \label{eq:VarianceCalculation4}
	\end{align}
	Here, \eqref{eq:VarianceCalculation2} follows by Remark 3.2.4 in \cite{peng_nonlinearExpectation_book}. Moreover, we use Proposition 3.2.3(iv) in \cite{peng_nonlinearExpectation_book} in \eqref{eq:VarianceCalculation3} and Proposition 3.3.6(iii) in \cite{peng_nonlinearExpectation_book} in \eqref{eq:VarianceCalculation4}. 
	By applying $G$-It\^{o}'s formula given by Theorem 3.6.3 in \cite{peng_nonlinearExpectation_book} it holds
	\begin{align}
		{X^t_s(i)X^s_s(i)}&=\int_0^s {X^s_u(i)}K_H(t,u) dB_u(i)+ \int_0^s {X^t_u(i)}K_H(s,u) dB_u(i) \nonumber \\
		& \quad + \int_0^s K_H(s,u) K_H(t,u) d\langle B(i) \rangle_u. \label{eq:VarianceCalculation5}
	\end{align}
	Thus, \eqref{eq:VarianceCalculation4} together with \eqref{eq:VarianceCalculation5} implies
	\begin{align}
		\hat{\mathbb{E}}\left[ B_t^H(i) B_s^H(i) \right]&=  \hat{\mathbb{E}}\left[\int_0^s K_H(s,u) K_H(t,u) d\langle B(i) \rangle_u \right] \label{eq:VarianceCalculation6} \\
		&=   \hat{\mathbb{E}}\left[\int_0^s K_H(s,u) K_H(t,u) d\langle B(i) \rangle_u \pm 2\int_0^s G_{i} \left( K_H(s,u) K_H(t,u)\right)du \right] \nonumber \\
		&=   \hat{\mathbb{E}}\left[\int_0^s K_H(s,u) K_H(t,u) d\langle B (i)\rangle_u - 2\int_0^s G_{i} \left( K_H(s,u) K_H(t,u)\right)du \right] \nonumber \\
		&\quad  +2\int_0^s G_{i} \left( K_H(s,u) K_H(t,u)\right)du \nonumber \\
		&=2\int_0^s G_{i} \left( K_H(s,u) K_H(t,u)\right)du \label{eq:VarianceCalculation7} \\
		&= \overline{\sigma}^2_i \int_0^s K_H(s,u)K_H(t,u)du. \label{eq:VarianceCalculation8} 
		\end{align}
	\greenNew{Here and similarly in the following, the notation $\pm2\int_0^s G_{i} \left( K_H(s,u) K_H(t,u)\right)du$ indicates that the term $2\int_0^s G_{i} \left( K_H(s,u) K_H(t,u)\right)du$ is added and subtracted simultaneously.}
		%\greenNew{Here and in the following,  $\pm x := +x-x$ for given $x \in \mathbb{R}$.}
	In \eqref{eq:VarianceCalculation6} we use Lemma 3.3.4 in \cite{peng_nonlinearExpectation_book} together with the fact that for $X,Y \in L_G^1(\Omega)$ such that $\hat{\mathbb{E}}[X]=-\hat{\mathbb{E}}[-X]=0$, it holds $\hat{\mathbb{E}}[X+Y]=\hat{\mathbb{E}}[Y]$. Moreover, \eqref{eq:VarianceCalculation7} follows as $\overline{M}=(\overline{M}_t)_{t \geq 0}$ defined by 
	\begin{equation} \label{eq:MartingaleProperty}
		\overline{M}_t=\int_0^t \eta_u d \langle B(i) \rangle_u - 2 \int_0^t G_i(\eta_u)du,
	\end{equation}
	for $t \geq 0$ and $\eta \in M_G^1(0,T; \mathbb{R})$, is a $G$-martingale with $\blue{\overline{M}}_0=0$. \\
	\allowdisplaybreaks
	By doing the same calculations for $-B_t^HB_s^H$ it follows for $0<s<t$
	\begin{align}
		\hat{\mathbb{E}}\left[- B_t^H(i) B_s^H(i) \right]&=  \hat{\mathbb{E}}\left[\int_0^s -K_H(s,u) K_H(t,u) d\langle B(i) \rangle_u \right] \nonumber \\
		&=2\int_0^s G_i(-K_H(s,u) K_H(t,u))du \nonumber \\
		&=- \underline{\sigma}^2_i \int_0^s K_H(s,u)K_H(t,u)du. \label{eq:VarianceCalculation9} 
	\end{align}
	By \blue{Proposition 5.1.3} in \cite{nualart_20006} and Proposition 2.5 in \cite{nourdin_fractional_brownian_motion} it holds
	\begin{equation} \label{eq:IntegralKernels}
		\int_0^sK_H(s,u)K_H(t,u)du=\frac{1}{2} \left( t^{2H}+s^{2H}-\vert t-s \vert^{2H} \right).
	\end{equation}
	By putting together \eqref{eq:VarianceCalculation8} and \eqref{eq:VarianceCalculation9} with \eqref{eq:IntegralKernels}, equations \eqref{eq:VarianceStructureDDimension1} and \eqref{eq:VarianceStructureDDimension2} follow for \\$H \in \left(0,\frac{1}{2} \right) \cup \left( \frac{1}{2}, 1 \right)$. 
	Let now $H=\frac{1}{2}$ and $0<t=s$. Then it holds			
	\begin{align*}
		\hat{\mathbb{E}}\left[ B_{t}(i) B_t(i) \right]=\hat{\mathbb{E}}\left[ \left( \int_0^t dB_u(i) \right)^2 \right]= \hat{\mathbb{E}}\left[\int_0^t d\langle B(i) \rangle_u  \right]=\hat{\mathbb{E}}\left[\langle B(i) \rangle_t  \right]=\overline{\sigma}^2_it,
	\end{align*}
	where we use for the second equality the $G$-It\^{o}-isometry, see Proposition 3.4.5 in \cite{peng_nonlinearExpectation_book}. In the last equality we apply Lemma 3.4.1 in \cite{peng_nonlinearExpectation_book}. \\
	By the definition of the quadratic variation process of a one-dimensional $G$-Brownian motion as $\langle B(i) \rangle_t:=(B_t(i))^2-2\int_0^t B_u(i) dB_u(i)$, $t \ge 0$, we get that 
		\begin{equation}
			\hat{\mathbb{E}}[-\langle B(i) \rangle_t ]= \hat{\mathbb{E}}\left[- (B_t(i))^2 + 2 \int_0^t B_u(i) {dB_u(i)} \right]= \hat{\mathbb{E}}\left[- (B_t(i))^2 \right],
		\end{equation}
	by Proposition 3.3.6(iii) in \cite{peng_nonlinearExpectation_book}. Together with Lemma 3.4.1 in \cite{peng_nonlinearExpectation_book}, this implies that $-\hat{\mathbb{E}}\left[- (B_t(i))^2 \right]= \underline{\sigma}_i^2t$. We now consider $0<s<t$. Then we have
	\begin{align}
		\hat{\mathbb{E}}[B_t(i) B_s(i)]&= \hat{\mathbb{E}}\left[ B_t(i)B_s(i) -(B_s(i))^2 + (B_s(i))^2\right] \nonumber \\
		&=\hat{\mathbb{E}}\left[B_s(i)(B_t(i)-B_s(i))+(B_s(i))^2\right] \nonumber  \\
		&=\hat{\mathbb{E}}\left[\hat{\mathbb{E}}\left[B_s(i)(B_t(i)-B_s(i))+(B_s(i))^2 \vert \Omega_s \right] \right] \label{eq:CovarianceGBM1} \\
		&=\hat{\mathbb{E}}\left[\hat{\mathbb{E}}\left[B_s(i)(B_t(i)-B_s(i)) \vert \Omega_s \right] +(B_s(i))^2 \right] \label{eq:CovarianceGBM2} \\
		&=\hat{\mathbb{E}}\left[(B_s(i))^+\hat{\mathbb{E}}\left[B_t(i)-B_s(i) \vert \Omega_s \right]+(B_s(i))^-\hat{\mathbb{E}}\left[-(B_t(i)-B_s(i)) \vert \Omega_s \right]  +(B_s(i))^2 \right] \label{eq:CovarianceGBM3} \\
		&=\hat{\mathbb{E}} \left[ (B_s(i))^2 \right] \label{eq:CovarianceGBM4} \\
		&= \overline{\sigma}_i^2s. \nonumber
	\end{align}
	Here, we use Proposition 3.2.3(v) in \cite{peng_nonlinearExpectation_book} in \eqref{eq:CovarianceGBM1} and Remark 3.2.4 in \cite{peng_nonlinearExpectation_book} in \eqref{eq:CovarianceGBM2}. Moreover, \eqref{eq:CovarianceGBM3} follows by Proposition 3.2.3(iv) in \cite{peng_nonlinearExpectation_book}. Finally, we get \eqref{eq:CovarianceGBM4} by Example 3.2.10 in \cite{peng_nonlinearExpectation_book}. By the same arguments it follows
	\begin{align}
		\blue{-}\hat{\mathbb{E}}[-B_t(i) B_s(i)] &=\blue{-} \hat{\mathbb{E}}\left[-B_s(i)(B_t(i)-B_s(i))-(B_s(i))^2\right] \nonumber  \\
		&=\blue{-}\hat{\mathbb{E}}\left[\hat{\mathbb{E}}\left[-B_s(i)(B_t(i)-B_s(i)) \vert \Omega_s \right] -(B_s(i))^2 \right] \nonumber\\
		&=\blue{-}\hat{\mathbb{E}}\left[(-B_s(i))^+\hat{\mathbb{E}}\left[B_t(i)-B_s(i) \vert \Omega_s \right]+(-B_s(i))^-\hat{\mathbb{E}}\left[-(B_t(i)-B_s(i)) \vert \Omega_s \right]  -(B_s(i))^2 \right] \nonumber \\
		&=\blue{-}\hat{\mathbb{E}} \left[ -(B_s(i))^2 \right] \nonumber \\
		&= \underline{\sigma}_i^2s. \nonumber
	\end{align}
	\end{proof}
As the components of a $G$-Brownian motion are not independent, the components of \blue{a} fractional $G$-Brownian motion do not satisfy this property either. \\
For $i=1,...,d$ and $i \neq j$ we define
\begin{equation}
	\overline{\sigma}^2_{ij}:=\sup_{\gamma \in \Theta} (\gamma_{ii}+ \gamma_{jj}+2\gamma_{ij})-   \inf_{\gamma \in \Theta} (\gamma_{ii}+ \gamma_{jj}-2\gamma_{ij}),
\end{equation}
and
\begin{equation}
	\underline{\sigma}^2_{ij}:=-\sup_{\gamma \in \Theta} (\gamma_{ii}+ \gamma_{jj}-2\gamma_{ij})+\inf_{\gamma \in \Theta} (\gamma_{ii}+ \gamma_{jj}+2\gamma_{ij}),
\end{equation}
\greenNew{for $\Theta \subseteq \mathbb{S}(d)$ as in \eqref{G-equationOneDimension}. }
\begin{prop}\label{prop:sitj}
Let $B^H=(B^H_t)_{t \geq 0}$ be a $d$-dimensional \blue{fractional} $G$-Brownian motion of Hurst index $\blue{H \in (0,1)}$. Let $i,j=1,...,d$, $i \neq j$ and $s,t \geq 0$. Then it holds \blue{for $H \in (0,\frac{1}{2}) \cup (\frac{1}{2},1)$}
\begin{align}
	\hat{\mathbb{E}} \left[ B_s^H(i) B_t^H(j) \right] & \leq  \frac{1}{8} \left(t^{2H}+s^{2H}-\vert t-s \vert^{2H} \right) \overline{\sigma}_{ij}^2 \nonumber \\
	-\hat{\mathbb{E}} \left[ -B_s^H(i) B_{\blue{t}}^H(j) \right]& \geq   \frac{1}{8} \left(t^{2H}+s^{2H}-\vert t-s \vert^{2H} \right) \underline{\sigma}_{ij}^2 .\nonumber
\end{align}
\blue{
For $H = \frac{1}{2}$ we have
\begin{align}
	\hat{\mathbb{E}} \left[ B_s^H(i) B_t^H(j) \right] & =(s \wedge t)  \sup_{\gamma \in \Theta} \gamma_{ij} \nonumber \\
	-\hat{\mathbb{E}} \left[ -B_s^H(i) B_t^H(j) \right]& = (s \wedge t) \inf_{\gamma \in \Theta} \gamma_{ij},\label{eq:secondforH12sitj}
\end{align}
for given $\Theta \subseteq \mathbb{S}(d)$ in \eqref{G-equationOneDimension}. }
\end{prop}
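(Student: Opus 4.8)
The plan is to mimic the componentwise argument of Proposition \ref{Prop:CovariationComponentwise}, but now keeping track of the cross term between components $i$ and $j$. Fix $i\neq j$, $0<s\le t$, and $H\in(0,\frac12)\cup(\frac12,1)$. First I would write $B^H_s(i)=\int_0^s K_H(s,u)\,dB_u(i)$ and $B^H_t(j)=\int_0^s K_H(t,u)\,dB_u(j)+\int_s^t K_H(t,u)\,dB_u(j)$, then condition on $\Omega_s$ exactly as in \eqref{eq:VarianceCalculuation1}--\eqref{eq:VarianceCalculation4}: the term $\int_s^t K_H(t,u)\,dB_u(j)$ is a symmetric $G$-martingale increment, so Proposition 3.2.3(iv)--(v) and Remark 3.2.4 in \cite{peng_nonlinearExpectation_book} kill its contribution under $\hat{\mathbb{E}}[\cdot\mid\Omega_s]$. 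This reduces the computation to $\hat{\mathbb{E}}\big[\,(\int_0^s K_H(s,u)\,dB_u(j))(\int_0^s K_H(t,u)\,dB_u(i))\,\big]$, i.e. an integral of the two components over the common interval $[0,s]$.

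Next I would apply the $G$-It\^o formula (Theorem 3.6.3 in \cite{peng_nonlinearExpectation_book}) to the product of the two one-dimensional $G$-It\^o processes $X^s_u(j):=\int_0^u K_H(s,r)\,dB_r(j)$ and $Y^t_u(i):=\int_0^u K_H(t,r)\,dB_r(i)$, exactly as in \eqref{eq:VarianceCalculation5}, to obtain the stochastic-integral terms (which are symmetric $G$-martingales vanishing under $\hat{\mathbb{E}}$) plus $\int_0^s K_H(s,u)K_H(t,u)\,d\langle B(i),B(j)\rangle_u$, using the cross-variation \eqref{eq:CovariationMatrixGBrownianmotionD}. The key point is then to estimate $\hat{\mathbb{E}}\big[\int_0^s f(u)\,d\langle B(i),B(j)\rangle_u\big]$ for the deterministic nonnegative integrand $f(u)=K_H(s,u)K_H(t,u)$. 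Writing $4\langle B(i),B(j)\rangle=\langle B(i)+B(j)\rangle-\langle B(i)-B(j)\rangle$ and bounding $\int_0^s f\,d\langle B(i)+B(j)\rangle\le \hat{\mathbb{E}}$-wise by $\sigma^2_{(\mathbf e_i+\mathbf e_j)(\mathbf e_i+\mathbf e_j)^T}\int_0^s f$ while $-\int_0^s f\,d\langle B(i)-B(j)\rangle\le -\sigma^2_{-(\mathbf e_i-\mathbf e_j)(\mathbf e_i-\mathbf e_j)^T}\int_0^s f$ (using \eqref{eq:DefinitionBoundariesVolatilityScalar} and $G(A)=\frac12\sup_{\gamma\in\Theta}\mathrm{tr}[A\gamma]$), subadditivity of $\hat{\mathbb{E}}$ gives the upper bound $\frac14\overline\sigma^2_{ij}\int_0^s f(u)\,du$; the lower bound for $-\hat{\mathbb{E}}[-B^H_s(i)B^H_t(j)]$ follows symmetrically with the reversed inequalities, producing $\underline\sigma^2_{ij}$. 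Combining with \eqref{eq:IntegralKernels}, $\int_0^s K_H(s,u)K_H(t,u)\,du=\frac12(t^{2H}+s^{2H}-|t-s|^{2H})$, yields the stated inequalities with the factor $\frac18$.

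For $H=\frac12$ the process is just $B$ itself, so I would repeat the telescoping argument \eqref{eq:CovarianceGBM1}--\eqref{eq:CovarianceGBM4}: for $0<s<t$, $\hat{\mathbb{E}}[B_s(i)B_t(j)]=\hat{\mathbb{E}}[B_s(i)(B_t(j)-B_s(j))+B_s(i)B_s(j)]$, and conditioning on $\Omega_s$ plus Proposition 3.2.3(iv) in \cite{peng_nonlinearExpectation_book} removes the increment term, leaving $\hat{\mathbb{E}}[B_s(i)B_s(j)]$, which by \eqref{eq:QuadraticVariationD_new} equals $s\sup_{\gamma\in\Theta}\gamma_{ij}$; the case $s\ge t$ is symmetric, giving $(s\wedge t)\sup_{\gamma\in\Theta}\gamma_{ij}$, and the analogous computation with signs flipped gives $(s\wedge t)\inf_{\gamma\in\Theta}\gamma_{ij}$. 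The main obstacle I anticipate is the bookkeeping in the estimate of $\hat{\mathbb{E}}[\int_0^s f\,d\langle B(i),B(j)\rangle]$: unlike the diagonal case in Proposition \ref{Prop:CovariationComponentwise}, here one only gets an inequality rather than an identity, because the "add-and-subtract a $G$-martingale" trick of \eqref{eq:MartingaleProperty} no longer closes the gap once the quadratic variation is a difference of two quadratic variations whose uncertainty sets are not jointly attained; care is needed that the resulting constants match exactly the definitions of $\overline\sigma^2_{ij}$ and $\underline\sigma^2_{ij}$ given just before the proposition.
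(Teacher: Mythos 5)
Your proposal is correct and follows essentially the same route as the paper: reduce via conditioning on $\Omega_s$ and the $G$-It\^{o} formula to $\hat{\mathbb{E}}\bigl[\int_0^s K_H(s,u)K_H(t,u)\,d\langle B(i),B(j)\rangle_u\bigr]$, polarize the cross-variation via \eqref{eq:CovariationMatrixGBrownianmotionD}, use subadditivity together with the add-and-subtract $G$-martingale compensators for $B^{\mathbf{e}_i\pm\mathbf{e}_j}$ to get the constants $\overline{\sigma}_{ij}^2$, $\underline{\sigma}_{ij}^2$, and conclude with \eqref{eq:IntegralKernels}; the $H=\tfrac12$ case is also handled exactly as in the paper. (The swap of which kernel accompanies which component in your intermediate expression is immaterial, since the kernel product and the cross-variation are symmetric.)
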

\begin{proof}
\blue{Assume first $H \in (0,\frac{1}{2}) \cup (\frac{1}{2},1)$.}
By similar calculations as in the proof of Proposition \ref{Prop:CovariationComponentwise} it follows that
\begin{align}
	\hat{\mathbb{E}} \left[ B_s^H(i) B_t^H(j) \right]
	&= \hat{ \mathbb{E}} \left[ \int_0^s K_H(s,u) K_H(t,u) d \langle B(i),B(j) \rangle_u \right] \nonumber \\
	&= \frac{1}{4} \hat{\mathbb{E}} \left[ \int_0^s K_H(s,u) K_H(t,u) d \langle B^{\textbf{e}_i+ \textbf{e}_j} \rangle_u + \int_0^s -K_H(s,u) K_H(t,u)  d \langle B^{\textbf{e}_i- \textbf{e}_j} \rangle_u \right],\label{eq:ExpectationIncrements1}
\end{align}
where we use \eqref{eq:CovariationMatrixGBrownianmotionD}. Note that $B^{\textbf{e}_i+ \textbf{e}_j}$ is a one-dimensional $G_{\textbf{e}_i+ \textbf{e}_j}$-Brownian motion with 
\begin{align}
	G_{\textbf{e}_i+ \textbf{e}_j}(\alpha) &=\frac{1}{2}\left(2{G}\left((\textbf{e}_i+ \textbf{e}_j)(\textbf{e}_i+ \textbf{e}_j)^T\right) \alpha^+ + 2G\left(-(\textbf{e}_i+ \textbf{e}_j)(\textbf{e}_i+ \textbf{e}_j)^T \right) \alpha^-\right) \nonumber \\
	&={\frac{1}{2}\left(\sup_{\gamma \in \Theta} \textnormal{tr}\left[ (\textbf{e}_i+ \textbf{e}_j)(\textbf{e}_i+ \textbf{e}_j)^T \gamma\right] \alpha^+ {+} \sup_{\gamma \in \Theta}\textnormal{tr}\left[-(\textbf{e}_i+ \textbf{e}_j)(\textbf{e}_i+ \textbf{e}_j)^T \gamma \right] \alpha^-\right)} \nonumber \\
	&=\frac{1}{2}\left(\sup_{\gamma \in \Theta}(\gamma_{ii}+ \gamma_{jj}+2\gamma_{ij})\alpha^+{+} \sup_{\gamma \in \Theta}\left(-\gamma_{ii}-{\gamma_{jj}}-2\gamma_{ij}\right) \alpha^-\right) \nonumber \\
	&={\frac{1}{2}} \left( \sup_{\gamma \in \Theta} (\gamma_{ii}+ \gamma_{jj}+2\gamma_{ij})\alpha^+ - \inf_{\gamma \in \Theta} (\gamma_{ii}+ \gamma_{jj}+2\gamma_{ij})\alpha^- \right)\label{eq:ExpectationIncrements2}
\end{align}
and
$B^{\textbf{e}_i- \textbf{e}_j}$ is a one-dimensional $G_{\textbf{e}_i- \textbf{e}_j}$-Brownian motion with 
\begin{align}
	G_{\textbf{e}_i- \textbf{e}_j}(\alpha) &=\frac{1}{2}\left({2}{G}\left((\textbf{e}_i- \textbf{e}_j)(\textbf{e}_i- \textbf{e}_j)^T)\right) \alpha^+ + {2}{G}\left(-(\textbf{e}_i- \textbf{e}_j)(\textbf{e}_i-\textbf{e}_j)^T\right) \alpha^- \right) \nonumber \\
	&={\frac{1}{2}\left(\sup_{\gamma \in \Theta} \textnormal{tr}\left[ (\textbf{e}_i- \textbf{e}_j)(\textbf{e}_i- \textbf{e}_j)^T \gamma\right] \alpha^+ {+} \sup_{\gamma \in \Theta}\textnormal{tr}\left[-(\textbf{e}_i- \textbf{e}_j)(\textbf{e}_i- \textbf{e}_j)^T \gamma \right] \alpha^-\right)} \nonumber \\
	&={\frac{1}{2}\left(\sup_{\gamma \in \Theta}(\gamma_{ii}+ \gamma_{jj}-2\gamma_{ij})\alpha^+{+} \sup_{\gamma \in \Theta}\left(-\gamma_{ii}-\gamma_{jj}+2\gamma_{ij}\right) \alpha^-\right)} \nonumber \\
	&= {\frac{1}{2}} \left( \sup_{\gamma \in \Theta} (\gamma_{ii}+ \gamma_{jj}-2\gamma_{ij})\alpha^+ - \inf_{\gamma \in \Theta} (\gamma_{ii}+ \gamma_{jj}-2\gamma_{ij})\alpha^-\right). \label{eq:ExpectationIncrements3}
\end{align}
By \blue{using the sublinearity of $\hat{\mathbb{E}}$} and \blue{combining} \eqref{eq:ExpectationIncrements1}-\eqref{eq:ExpectationIncrements3} \blue{with similar calculations as in the proof of Proposition \ref{Prop:CovariationComponentwise}} we get
\small{
\begin{align}
	\hat{\mathbb{E}} \left[ B_s^H(i) B_t^H(j) \right]	& \quad = \frac{1}{4} \hat{\mathbb{E}} \bigg[ \int_0^s K_H(s,u)K_H(t,u) d \langle B^{\textbf{e}_i+ \textbf{e}_j} \rangle_u \pm 2\int_0^s G_{\textbf{e}_i+ \textbf{e}_j} \left(K_H(s,u)K_H(t,u)\right) du \nonumber \\
	& \quad \quad + \int_0^s -K_H(s,u)K_H(t,u) d \langle B^{\textbf{e}_i- \textbf{e}_j} \rangle_u \pm 2\int_0^s G_{\textbf{e}_i- \textbf{e}_j} \left(-K_H(s,u)K_H(t,u)\right) du \bigg] \nonumber \\
	&\quad\leq \frac{1}{2} \left( \int_0^s G_{\textbf{e}_i+ \textbf{e}_j} \left(K_H(s,u)K_H(t,u)\right) du + \int_0^s G_{\textbf{e}_i- \textbf{e}_j} \left(-K_H(s,u)K_H(t,u)\right) du \right) \nonumber \\
	&\quad= \frac{1}{4} \bigg( \int_0^s  \sup_{\gamma \in \Theta} (\gamma_{ii}+ \gamma_{jj}+2\gamma_{ij}) \left(K_H(s,u)K_H(t,u)\right)^+  du  \notag \\ & \qquad - \int_0^s  \inf_{\gamma \in \Theta} (\gamma_{ii}+ \gamma_{jj}-2\gamma_{ij}) \left(-K_H(s,u)K_H(t,u)\right)^- du \bigg) \nonumber \\
	&\quad= \frac{1}{{4}} \bigg( \int_0^s  \sup_{\gamma \in \Theta} (\gamma_{ii}+ \gamma_{jj}+2\gamma_{ij}) K_H(s,u)K_H(t,u)  du \notag \\ & \qquad  - \int_0^s  \inf_{\gamma \in \Theta} (\gamma_{ii}+ \gamma_{jj}-2\gamma_{ij}) K_H(s,u)K_H(t,u) du \bigg) \label{eq:KHpositive} \\
	&\quad= \frac{1}{{8}} \left(t^{2H}+s^{2H}-\vert t-s \vert^{2H} \right) \left( \sup_{\gamma \in \Theta} (\gamma_{ii}+ \gamma_{jj}+2\gamma_{ij})-   \inf_{\gamma \in \Theta} (\gamma_{ii}+ \gamma_{jj}-2\gamma_{ij}) \right),\nonumber
\end{align}}
where \eqref{eq:KHpositive} follows since $K_H(t,s)>0$ for any $t>s>0$.
\allowdisplaybreaks
Analogously, we have
\begin{align}
	\hat{\mathbb{E}} \left[-B_s^H(i) B_t^H(j) \right]
	&= \hat{ \mathbb{E}} \left[- \int_0^s K_H(s,u) K_H(t,u) d \langle B(i),B(j) \rangle_u \right] \nonumber \\
	&= \frac{1}{4} \hat{\mathbb{E}} \left[ \int_0^s- K_H(s,u) K_H(t,u) d \langle B^{\textbf{e}_i+ \textbf{e}_j} \rangle_u + \int_0^s K_H(s,u) K_H(t,u)  d \langle B^{\textbf{e}_i -\textbf{e}_j} \rangle_u \right]\nonumber \\
	&= \frac{1}{4} \hat{\mathbb{E}} \bigg[ \int_0^s - K_H(s,u)K_H(t,u) d \langle B^{\textbf{e}_i+ \textbf{e}_j} \rangle_u \pm 2\int_0^s G_{\textbf{e}_i+ \textbf{e}_j} \left(-K_H(s,u)K_H(t,u)\right) du \nonumber \\
	& \quad \quad + \int_0^s K_H(s,u)K_H(t,u) d \langle B^{\textbf{e}_i- \textbf{e}_j} \rangle_u \pm 2\int_0^s G_{\textbf{e}_i- \textbf{e}_j} \left(K_H(s,u)K_H(t,u)\right) du \bigg] \nonumber \\
	&\leq \frac{1}{2} \left( \int_0^s G_{\textbf{e}_i+ \textbf{e}_j} \left(-K_H(s,u)K_H(t,u)\right) du + \int_0^s G_{\textbf{e}_i- \textbf{e}_j} \left(K_H(s,u)K_H(t,u)\right) du \right) \nonumber \\
	&= \frac{1}{{4}} \bigg( \int_0^s  \inf_{\gamma \in \Theta} (\gamma_{ii}+ \gamma_{jj}+2\gamma_{ij}) (-K_H(s,u)K_H(t,u) ) du  \notag \\ & \qquad + \int_0^s  \sup_{\gamma \in \Theta} (\gamma_{ii}+ \gamma_{jj}-2\gamma_{ij}) \left(K_H(s,u)K_H(t,u)\right) du \bigg) \nonumber \\
	&= \frac{1}{{8}} \left(t^{2H}+s^{2H}-\vert t-s \vert^{2H} \right) \left( \sup_{\gamma \in \Theta} (\gamma_{ii}+ \gamma_{jj}-2\gamma_{ij})-   \inf_{\gamma \in \Theta} (\gamma_{ii}+ \gamma_{jj}+2\gamma_{ij}) \right).\nonumber
\end{align}
\blue{Let now $H=\frac{1}{2}$. In this case, our fractional $G$-Brownian motion boils down to a $G$-Brownian motion. \greenNew{Consider} $0 \le s \le t$ and fix \greenNew{$i, j \in \lbrace 1,...,d \rbrace$,} $i \ne j$. We then have 
\begin{align}
\hat{\mathbb{E}} \left[B_s^H(i) B_t^H(j) \right] 
& = \hat{\mathbb{E}} \left[\hat{\mathbb{E}} \left[B_s^H(i) B_t^H(j) \bigg \vert\Omega_s\right]\right] \label{eq:H12fromprop323v} \\
& = \hat{\mathbb{E}} \left[\left(B_s^H(i)\right)^+\hat{\mathbb{E}} \left[B_t^H(j) \pm B_s^H(j)  \bigg \vert\Omega_s\right] + \left(B_s^H(i)\right)^-\hat{\mathbb{E}} \left[-B_t^H(j) \pm B_s^H(j)  \bigg \vert\Omega_s\right]\right] \label{eq:H12fromprop323iv}\\
& = \hat{\mathbb{E}} \left[B_s^H(j)\left(\left(B_s^H(i)\right)^+  - \left(B_s^H(i)\right)^-\right) \right] \label{eq:H12fromexamples324328}\\
& = \hat{\mathbb{E}} \left[B_s^H(j)B_s^H(i) \right] \nonumber\\
& =s \cdot \sup_{\gamma \in \Theta} \gamma_{ij} \label{eq:finalforH12},
\end{align}
where \eqref{eq:H12fromprop323v} and \eqref{eq:H12fromprop323iv} come from points (v) and (iv) of Proposition 3.2.3 in \cite{peng_nonlinearExpectation_book}, respectively, whereas \eqref{eq:H12fromexamples324328} is implied by Examples 3.2.4 and 3.2.8 in \cite{peng_nonlinearExpectation_book}. Moreover, \eqref{eq:finalforH12} follows by equation \eqref{eq:QuadraticVariationD_new}. \\Finally, equation \eqref{eq:secondforH12sitj} can be proved using similar arguments.
}
\end{proof}

We now prove that the components of a $d$-dimensional fractional $G$-Brownian motion {are self-similar and have stationary increments}.
	
	\begin{prop} \label{prop:SelfSimilar_and_StationarityIncrements}
	The components of a $d$-dimensional fractional $G$-Brownian motion \blue{of Hurst index $H \in \left(0,\frac{1}{2}\right) \cup \left(\frac{1}{2},1\right)$} are self-similar, i.e. for all $i =1,...,d$, $a >0$, it holds 
	$${a^{-H}B_{at}^H(i)\sim  B_t^H(i) \quad \text{for } t \geq 0.}$$
	For $H=\frac{1}{2}$ \blue{a} $d$-dimensional fractional $G$-Brownian motion is self-similar. 
	\end{prop}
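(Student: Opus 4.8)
The plan is to establish self-similarity of the components directly from the Volterra representation \eqref{eq:FractionalGBrownianMotionOurDefintionD}, exploiting the scaling property of the kernel $K_H$ together with the scaling property of a one-dimensional $G$-Brownian motion. Fix $i \in \{1,\dots,d\}$ and $a>0$. First I would recall that $B(i)=B^{\mathbf{e}_i}$ is a one-dimensional $G_i$-Brownian motion, so that $(a^{-1/2}B_{at}(i))_{t\ge 0}$ is again a $G_i$-Brownian motion; equivalently, for any deterministic $\eta \in M_G^2(0,T)$ one has the scaling identity for $G$-stochastic integrals $\int_0^{at}\eta_u\,dB_u(i) \sim \sqrt{a}\int_0^t \eta_{av}\,dB_v(i)$ in distribution (in the sense of identically distributed random variables of Definition~\ref{defi:G-NormalDistribution}), which for simple integrands follows from the definition of the $G$-integral together with the $G$-normality and independence of the increments, and then passes to $M_G^2$ by the $L_G^2$-continuity of the integral operator. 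The second ingredient is the kernel self-similarity $K_H(at,au)=a^{H-1/2}K_H(t,u)$ for $t>u>0$, which can be read off directly from the explicit formula \eqref{eq:KernelVolterra} by the substitution scaling both $t$ and $s$, for both ranges $H>\tfrac12$ and $H<\tfrac12$.

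Combining these, I would write, for $t \ge 0$,
\begin{align*}
B_{at}^H(i) = \int_0^{at} K_H(at,u)\,dB_u(i) \sim \sqrt{a}\int_0^t K_H(at,av)\,dB_v(i) = \sqrt{a}\,a^{H-1/2}\int_0^t K_H(t,v)\,dB_v(i) = a^{H} B_t^H(i),
\end{align*}
so that $a^{-H}B_{at}^H(i) \sim B_t^H(i)$. To be careful, the symbol $\sim$ here should be upgraded from a one-time-point statement to an identity of the finite-dimensional distributions of the processes $(a^{-H}B_{at}^H(i))_{t\ge0}$ and $(B_t^H(i))_{t\ge0}$ — which is what self-similarity should mean — so in the argument above the scaling of the $G$-integral must be applied jointly to a finite vector of integrals $\big(\int_0^{at_k}K_H(at_k,u)\,dB_u(i)\big)_{k=1,\dots,n}$; this is legitimate because the change of variables and the $G$-integral scaling act linearly and simultaneously on all components, and the joint distribution of such a vector is determined by a test function $\varphi \in C_{l,lip}(\mathbb{R}^n)$ evaluated against the $G$-normal structure. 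For $H=\tfrac12$ the statement reduces to the self-similarity of the $d$-dimensional $G$-Brownian motion itself, i.e. $(a^{-1/2}B_{at})_{t\ge0}\sim (B_t)_{t\ge0}$, which is standard (it follows from Definition~\ref{defi:G-BrownainMotion}: the rescaled process has the same initial value, the same independence-of-increments structure, and $(a^{-1/2}(B_{a(t+s)}-B_{at}))s^{-1/2}$ is $G$-normally distributed).

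The main obstacle I anticipate is the rigorous justification of the $G$-integral scaling identity $\int_0^{at}\eta_u\,dB_u(i)\sim\sqrt a\int_0^t\eta_{av}\,dB_v(i)$ for deterministic $\eta$, and in particular promoting it from simple integrands to the closure $M_G^2(0,T)$ in a way that is compatible with the joint (finite-dimensional) distributional statement. For simple step functions this is a direct computation using the definition of $\hat{\mathbb{E}}$ on $Lip(\Omega)$ via the $G$-normal random variables $\xi_j\sqrt{t_{j+1}-t_j}$; the measure-change $u=av$ turns $[t_j,t_{j+1})$ into $[t_j/a,t_{j+1}/a)$ and rescales each increment length by $a$, producing exactly the factor $\sqrt a$. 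The density of simple integrands in $M_G^2$, combined with continuity of $I(\cdot):M_G^2(0,T)\to L_G^2(\Omega_T)$ (Lemma 3.3.4 in \cite{peng_nonlinearExpectation_book}) and the fact that $\|\eta_{a\cdot}\|_{M_G^2(0,T/a)}$ is controlled by $\|\eta\|_{M_G^2(0,T)}$ up to the explicit factor $\sqrt a$, then lets us pass to the limit and conclude; since $K_H(t,\cdot)$ is deterministic and square-integrable (Remark~\ref{remark:IntegralWellDefinedDimension}), it lies in this closure and the argument applies. A minor additional point to check is that the constants $c_H,d_H$ in \eqref{eq:VolterraConstant1}-\eqref{eq:VolterraConstant2} do not depend on $t$ or $s$, so they factor out cleanly and play no role in the scaling.
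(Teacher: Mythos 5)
Your proposal is correct and follows essentially the same route as the paper: the kernel scaling identity $K_H(at,au)=a^{H-\frac{1}{2}}K_H(t,u)$ (equivalent to the paper's $K_H(at,u)=a^{H-\frac{1}{2}}K_H(t,\frac{u}{a})$), combined with the scaling property of the underlying $G$-Brownian motion applied to the Volterra integral, and the standard scaling of a $G$-Brownian motion for $H=\frac{1}{2}$. Your additional care in justifying the integral scaling on simple integrands and its extension to $M_G^2(0,T)$ only spells out a step the paper leaves implicit.
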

	
	\begin{proof}
		Let $a >0$. By Remark 3.1.4 in \cite{peng_nonlinearExpectation_book} we know that $\left(a^{-\frac{1}{2}}B_{at}^{\frac{1}{2}}\right)_{t \geq 0}{=}\left(a^{-\frac{1}{2}}B_{at}\right)_{t \geq 0}$ is also a $d$-dimensional $G$-Brownian motion and thus $B^{\frac{1}{2}}$ is self-similar. Let now $H \in \left( 0, \frac{1}{2}\right) \cup \left( \frac{1}{2}, 1 \right)$. We first show that 
		\begin{align} \label{eq:KernelScalingProperty}
			K_H(at,u)=a^{H-\frac{1}{2}}K_H\left( t, \frac{u}{a}\right)
		\end{align}
		for $0<u<t$. For $H \in \left(\frac{1}{2},1\right)$, this follows since
		\begin{align*}
			K_H(at,u)&=c_H u^{\frac{1}{2}-H} \int_u^{at} (v-u)^{H-\frac{3}{2}} v^{H-\frac{1}{2}}dv \\
			&=c_H u^{\frac{1}{2}-H} \int_{\frac{u}{a}}^t(za-u)^{H-\frac{3}{2}} (za)^{H-\frac{1}{2}}a \ dz \\
			&=c_H u^{\frac{1}{2}-H} a^{2H-1}  \int_{\frac{u}{a}}^t(z-u)^{H-\frac{3}{2}} z^{H-\frac{1}{2}}  dz \\
			&=a^{H-\frac{1}{2}}c_H \left(\frac{u}{a}\right)^{\frac{1}{2}-H}  \int_{\frac{u}{a}}^t(z-u)^{H-\frac{3}{2}} z^{H-\frac{1}{2}} dz \\
			&=a^{H-\frac{1}{2}} K_H\left(t,\frac{u}{a}\right),
		\end{align*}
		where we perform a change of variables with $z:= \frac{v}{a}$. With similar arguments we derive \eqref{eq:KernelScalingProperty} also for $H \in \left( 0,\frac{1}{2}\right)$. By the definition of $B^H(i)$ and equation \eqref{eq:KernelScalingProperty} we get
		\begin{align*}
			a^{-H}B_{at}^H(i)=a^{-H}\int_0^{at} K_H(at,u)dB_u(i)=a^{-\frac{1}{2}}\int_0^{at} K_H\left( t, \frac{u}{a}\right) dB_u(i).
		\end{align*}
		Thus, by using that {$a^{-\frac{1}{2}}B_{at}(i)\sim B_t(i)$ for $t \geq 0$ }, it follows 
		\begin{equation*}
			{a^{-H}B_{at}^H(i)\sim  B_t^H(i) \quad \text{for } t \geq 0.}
		\end{equation*} 
		\end{proof}

The next lemma is used to prove both \blue{Propositions} \blue{\ref{prop:PDEFractionalBrownianMotion} and \ref{prop:statincr}}. 
\begin{lemma} \label{lemma:DistributionDeterministicFunction}
		Let $f:\mathbb{R}_+ \to \mathbb{R}$ such that 
		\begin{equation} \label{eq:IntegrabilityCondition}
			\int_0^{{T}} f^2(s)ds < \infty
		\end{equation} 
		{for all $T>0$.}
		Then for \blue{$i=1,...,d$}, $T>0$, it holds
		\begin{equation} \label{eq:DistributionDeterminsitic}
			\int_0^T f(t) dB_t(i) \sim \mathcal{N}\left( 0, [\underline{\sigma}_i^2(t), \overline{\sigma}_i^2(t)]\right)
	\end{equation}
 with $\underline{\sigma}^2_i(t):=\underline{\sigma}_i^2 \int_0^t f^2(s)ds$ and $\overline{\sigma}^2_i(t):=\overline{\sigma}_i^2 \int_0^t f^2(s)ds$.
 
 \end{lemma}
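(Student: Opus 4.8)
The statement is that for a deterministic $f$ satisfying the local square-integrability condition \eqref{eq:IntegrabilityCondition}, the $G$-Itô integral $\int_0^T f(t)\,dB_t(i)$ is $G$-normally distributed with the stated uncertainty interval for its variance. My plan is first to establish the claim for simple (piecewise constant) deterministic integrands, and then to pass to the limit using the density of such integrands in $M_G^2(0,T)$ together with the continuity of the $G$-Itô integral and the stability of $G$-normal distributions under $L_G^2$-limits.

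\textbf{Step 1 (simple integrands).} Write $f = \sum_{k=0}^{N-1} a_k \mathbf{1}_{[t_k,t_{k+1})}$ with $a_k \in \mathbb{R}$ and $0 = t_0 < \dots < t_N = T$. Then $\int_0^T f(t)\,dB_t(i) = \sum_{k=0}^{N-1} a_k\bigl(B_{t_{k+1}}(i) - B_{t_k}(i)\bigr)$. Each increment $B_{t_{k+1}}(i)-B_{t_k}(i)$ is, after scaling, $G_i$-normally distributed and independent of $(B_{t_1}(i),\dots,B_{t_k}(i))$ by Definition \ref{defi:G-BrownainMotion}; recalling that a one-dimensional $G_i$-normal law with variance uncertainty $[\underline{\sigma}_i^2,\overline{\sigma}_i^2]$ satisfies the defining relation $aX + b\overline{X} \sim \sqrt{a^2+b^2}\,X$ of Definition \ref{defi:G-NormalDistribution}, an induction on $N$ shows $\sum_{k} a_k(B_{t_{k+1}}(i)-B_{t_k}(i)) \sim \mathcal{N}\bigl(0,[\underline{\sigma}_i^2 \sum_k a_k^2(t_{k+1}-t_k),\ \overline{\sigma}_i^2\sum_k a_k^2(t_{k+1}-t_k)]\bigr)$. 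Note $\sum_k a_k^2(t_{k+1}-t_k) = \int_0^T f^2(s)\,ds$, which gives the formula for simple $f$.

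\textbf{Step 2 (approximation and passage to the limit).} Given a general $f$ with $\int_0^T f^2 < \infty$, it is deterministic, hence it lies in $M_G^2(0,T)$ (as in Remark \ref{remark:IntegralWellDefinedDimension}), and there is a sequence $(f_n)$ of simple deterministic functions with $\int_0^T (f-f_n)^2\,ds \to 0$. By the isometric/continuity property of the $G$-Itô integral (Lemma 3.3.4 in \cite{peng_nonlinearExpectation_book}), $I(f_n) \to I(f)$ in $L_G^2(\Omega_T)$, and moreover $\int_0^T f_n^2\,ds \to \int_0^T f^2\,ds$. For any $\varphi \in C_{l,lip}(\mathbb{R})$, the map $X \mapsto \hat{\mathbb{E}}[\varphi(X)]$ is continuous on $L_G^2(\Omega_T)$ (using the polynomial-Lipschitz growth of $\varphi$ together with the $L_G^2$-convergence and uniform integrability bounds), so $\hat{\mathbb{E}}[\varphi(I(f_n))] \to \hat{\mathbb{E}}[\varphi(I(f))]$. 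On the other hand, by Step 1, $\hat{\mathbb{E}}[\varphi(I(f_n))] = u_n(0,0)$ where $u_n$ solves the one-dimensional $G_i$-heat equation with terminal variance $\int_0^T f_n^2\,ds$; by continuous dependence of this PDE value on the (scalar) variance parameter, $u_n(0,0) \to u(0,0)$, the value associated with variance $\int_0^T f^2\,ds$. Matching the two limits yields \eqref{eq:DistributionDeterminsitic}. Equivalently, one can avoid the PDE and argue directly that $\mathcal{N}(0,[\underline{\sigma}_i^2 c_n, \overline{\sigma}_i^2 c_n])$-distributed random variables with $c_n \to c$ converge in distribution to a $\mathcal{N}(0,[\underline{\sigma}_i^2 c,\overline{\sigma}_i^2 c])$ law, since $\sqrt{c_n}\,\xi \sim \mathcal{N}(0,[\underline{\sigma}_i^2 c_n,\overline{\sigma}_i^2 c_n])$ for a fixed normalized $\xi$ and $\sqrt{c_n}\to\sqrt c$.

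\textbf{Main obstacle.} The routine part is Step 1; the delicate point is the continuity of $X \mapsto \hat{\mathbb{E}}[\varphi(X)]$ along the approximating sequence, i.e. justifying that $L_G^2$-convergence of the integrals transfers to convergence of the nonlinear expectations of all test functions $\varphi \in C_{l,lip}$. This requires controlling the polynomial growth of $\varphi$ uniformly, which follows from uniform $L_G^p$-bounds on $I(f_n)$ (available because $\int_0^T f_n^2\,ds$ converges, hence is bounded, and the $G$-moments of $I(f_n)$ are controlled by powers of this quantity). One must also be slightly careful that the distributional identity is claimed for all $\varphi\in C_{l,lip}(\mathbb{R})$, not merely bounded continuous ones, so the growth control is genuinely needed rather than a mere convenience.
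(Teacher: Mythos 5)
Your proof is correct and takes essentially the same route as the paper: an induction establishing the $G$-normal law for simple deterministic integrands via the defining relation of the $G$-normal distribution and the independence of increments, followed by passage to the limit using the continuous extension of the $G$-It\^{o} integral to $M_G^2(0,T)$ and the convergence $a^N \to \int_0^T f^2(s)\,ds$. Your Step 2 merely spells out the distributional-convergence and moment-control details that the paper compresses into ``combining all the results the lemma follows''.
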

 \begin{proof}
Let $f: \mathbb{R}_+ \to \mathbb{R}$ such that $\eqref{eq:IntegrabilityCondition}$ holds. Then, for fixed $T>0$ it holds $f \in M_G^2(0,T)$ and the $G$-It\^{o} integral of $f$ with respect to $B(i)$ is well-defined.
 	We {first} prove \eqref{eq:DistributionDeterminsitic} for a {simple process} $f \in M_G^{2,0}(0,T)$. To do so, \blue{for any fixed $N \in \mathbb{N}$} let $\lbrace t_0,...,t_N \rbrace$ be a partition of $[0,T]$, and 
 \begin{equation*}
	f(t):=\sum_{j=0}^{N-1} f(j) \textbf{1}_{[t_j, t_{j+1}]}(t)
\end{equation*} 
with $f(j):=f(t_j)$.
Then by Definition 3.3.3 in \cite{peng_nonlinearExpectation_book} 
	\begin{equation} \label{eq:DefinitionIntegralDeterministic}
		I_{\blue{N}}(f)=\int_0^T f(t) dB_t(i):=\sum_{j=0}^{N-1} f(j)(B_{t_{j+1}}(i)-B_{ t_{j}}(i)).
	\end{equation} 
We now show via induction that for $I_{\blue{N}}(f)$ defined in \eqref{eq:DefinitionIntegralDeterministic} it holds
\begin{equation} \label{eq:DistributionSimpleFunction}
	I_{\blue{N}}(f) \sim \mathcal{N}\left( 0, \left[ \underline{\sigma}_i^2 a^N, \overline{\sigma}_i^2a^N\right]\right)
\end{equation}
with $a^N:=\sum_{j=0}^{N-1}(f(j))^2 (t_{j+1}-t_j).$ For $N=1$, we have $B_{t_1}(i)=B_T(i) \sim \mathcal{N}(0,[\underline{\sigma}_i^2 T, \overline{\sigma}_i^2T])$ and thus $f(T) B_T(i) \sim \mathcal{N}(0,[\underline{\sigma}_i^2 (f(T))^2T, \overline{\sigma}_i^2(f(T))^2T])$. We now assume that \blue{\eqref{eq:DistributionSimpleFunction}} holds for $\blue{N=n-1}.$ By the induction hypothesis we know that
\begin{equation} \label{eq:InductionStep}
	\sum_{j=0}^{n-2} f(j) (B_{t_{j+1}}(i)-B_{t_j}(i)) \sim  \mathcal{N}\left( 0, \left[ \underline{\sigma}_i^2 a^{\blue{n-1}}, \overline{\sigma}^2_ia^{\blue{n-1}}\right]\right)
\end{equation}
with $a^{\blue{n-1}}:=\sum_{j=0}^{n-2}(f(j))^2 (t_{j+1}-t_j)$. Moreover, it holds that 
$$
f(n-1) (B_{t_n}(i)-B_{t_{n-1}}(i))\sim  f({n-1})\sqrt{t_{n}-t_{n-1}}B_1(i).
$$ Note that by \eqref{eq:InductionStep} \blue{we have}
$$
\sum_{j=0}^{n-2} f(j) (B_{t_{j+1}}(i)-B_{t_j}(i))\sim  \sqrt{a^{\blue{n-1}}}\bar{B}_1(i),
$$
 where \blue{$\bar{B}(i)$ is a $G$-Brownian motion and} $\bar{B}_1(i)$ is independent of $B_1(i)$ as $B_{t_n}(i)-B_{t_{n-1}}(i)$ is independent of the vector $(B_{t_1}(i),B_{t_2}(i)-B_{t_1}(i),...,B_{t_{n-1}}(i)-B_{t_{n-2}}(i))$. Therefore, by Definition \ref{defi:G-NormalDistribution} 	
\begin{align*}
	&f({n-1})\sqrt{t_{n}-t_{n-1}}B_1(i) + a^{\blue{n-1}} \bar{B}_1(i) \nonumber \\ &\sim  \sqrt{(f({n-1}))^2(t_n-t_{n-1})+ \sum_{j=0}^{n-2}(f(j))^2 (t_{j+1}-t_j)} B_1(i) \\&= \sqrt{\blue{a^{n}}} B_1(i),
\end{align*}

which concludes the proof of the induction. \\
From now on we denote by $\pi_T^N=\lbrace t_0^N,t_1^N,...,t_N^N \rbrace$\blue{, $N \in \mathbb{N}$,} a sequence of partitions of $[0,T]$ such that $\lim_{N \to \infty} \mu(\pi_T^N)=0$ \blue{with $\mu(\pi_T^N)$ defined in \eqref{eq:MeshSize}.}
By Lemma 3.3.4 in \cite{peng_nonlinearExpectation_book} the mapping $I: M_G^{0,2}(0,T) \to L_G^2(\Omega_T)$ in \eqref{eq:DefinitionIntegralDeterministic} can be continuously extended to $I: M_G^2(0,T) \to L_G^2(\Omega_T)$. Moreover, by the definition of the Lebesgue integral it is clear that 
\begin{align*}
	\lim_{N \to \infty} a^N=\lim_{N \to \infty}\sum_{j=0}^{N-1} (f_j^N)^2(t_{j+1}^N-t_{j}^N)= \int_0^T f^2(s) ds
\end{align*}
with $f_j^N:=f(t_j^N).$ Thus, combining all the results the lemma follows.
 \end{proof}

 {We close the section with \blue{the} following results}.

\begin{lemma} \label{lemma:DistributionFractionalGBM}
Let $B^H=(B_t^H)_{t \geq 0}$ be a $d$-dimensional fractional $G$-Brownian motion \blue{of} Hurst \blue{index} $H \in (0,1)$. Then for $i=1,...,d$ and fixed $t \geq 0$
\begin{equation*}
	B_t^H(i)  \sim \mathcal{N}\left(0, [\underline{\sigma}_i^2  t^{2H}, \overline{\sigma}_i^2 t^{2H}] \right). 
\end{equation*}
In particular for $t=1$ and any $H \in (0,1)$ it holds $B_1^H(i) {\sim}B_1(i)$.
\end{lemma}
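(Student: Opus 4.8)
The plan is to read off the distribution directly from the Volterra representation \eqref{eq:FractionalGBrownianMotionOurDefintionD} by invoking Lemma \ref{lemma:DistributionDeterministicFunction} for the deterministic integrand $s \mapsto K_H(t,s)$, and then to identify the resulting uncertainty interval via the kernel identity \eqref{eq:IntegralKernels}.

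First I would treat the degenerate index $H=\frac{1}{2}$ separately, since it is not governed by \eqref{eq:FractionalGBrownianMotionOurDefintionD}. By \eqref{eq:FractionalGBrownianMotionOurDefintionD2} we have $B^{1/2}_t(i)=B_t(i)=B^{\textbf{e}_i}_t$, which by Definition \ref{defi:G-BrownainMotion} together with \eqref{eq:DefinitionGComponent}--\eqref{eq:DefinitionGComponentBoundaries} is $\mathcal{N}(0,[\underline{\sigma}_i^2 t,\overline{\sigma}_i^2 t])$-distributed; this is exactly the claim, since $t^{2H}=t$ in this case. For $t=0$ the statement is trivial, as $B_0^H(i)=0$.

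Next, fix $t>0$ and $H\in\left(0,\frac{1}{2}\right)\cup\left(\frac{1}{2},1\right)$, and set $f(s):=K_H(t,s)$ for $s\in(0,t]$, extended by $f\equiv 0$ on $(t,\infty)$. By Remark \ref{remark:IntegralWellDefinedDimension} we have $\int_0^T f^2(s)\,ds=\int_0^t K_H(t,s)^2\,ds<\infty$ for every $T>0$, so the integrability condition \eqref{eq:IntegrabilityCondition} holds and Lemma \ref{lemma:DistributionDeterministicFunction} applies with this $f$, giving
\[
B_t^H(i)=\int_0^t K_H(t,s)\,dB_s(i)\sim\mathcal{N}\!\left(0,\Big[\underline{\sigma}_i^2\,\textstyle\int_0^t K_H(t,s)^2\,ds,\ \overline{\sigma}_i^2\,\textstyle\int_0^t K_H(t,s)^2\,ds\Big]\right).
\]
Specializing \eqref{eq:IntegralKernels} to $s=t$ yields $\int_0^t K_H(t,s)^2\,ds=\frac{1}{2}\big(t^{2H}+t^{2H}-0\big)=t^{2H}$, which turns the display into $B_t^H(i)\sim\mathcal{N}(0,[\underline{\sigma}_i^2 t^{2H},\overline{\sigma}_i^2 t^{2H}])$. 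The final assertion follows by setting $t=1$: then $B_1^H(i)\sim\mathcal{N}(0,[\underline{\sigma}_i^2,\overline{\sigma}_i^2])$, and the $H=\frac{1}{2}$ computation above (equivalently, Proposition \ref{Prop:CovariationComponentwise} at $s=t=1$) shows $B_1(i)$ has the same distribution, hence $B_1^H(i)\sim B_1(i)$.

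I do not anticipate a genuine obstacle: the argument is essentially an application of Lemma \ref{lemma:DistributionDeterministicFunction} combined with the already-established integral identity \eqref{eq:IntegralKernels}. The only points requiring a word of care are isolating the case $H=\frac{1}{2}$, for which the Volterra kernel plays no role, and noting that \eqref{eq:IntegralKernels}, stated there for $0<s\le t$, specializes without difficulty to the diagonal $s=t$ to produce the variance $t^{2H}$.
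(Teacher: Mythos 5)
Your argument is correct and coincides with the paper's own proof: both compute $\int_0^t K_H(t,u)^2\,du = t^{2H}$ from \eqref{eq:IntegralKernels} at $s=t$ and then invoke Lemma \ref{lemma:DistributionDeterministicFunction}, with the identification $B_1^H(i)\sim B_1(i)$ following since both are $\mathcal{N}(0,[\underline{\sigma}_i^2,\overline{\sigma}_i^2])$-distributed. Your explicit separate treatment of $H=\frac{1}{2}$ and $t=0$ is only slightly more detailed than the paper's, not a different route.
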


\begin{proof}
	Let $H \in (0,1)$ and fix $t \geq 0$. Then by \eqref{eq:IntegralKernels} we have
 \begin{equation*}
 \int_0^t (K_H(t,u))^2 du = \frac{1}{2}\left( t^{2H} + t^{2H}\right)	= t^{2H}< \infty
 \end{equation*}
 and thus it follows by Lemma \ref{lemma:DistributionDeterministicFunction} that $B_t^H(i) \sim \mathcal{N}\left(0, [\underline{\sigma}_i^2  t^{2H}, \overline{\sigma}^2_i t^{2H}] \right)$. 
\end{proof}

{The next proposition provides a useful connection between \blue{the} G-expectation \blue{of a \greenNew{multi-dimensional} fractional $G$-Brownian motion} and nonlinear PDEs.}

\begin{prop} \label{prop:PDEFractionalBrownianMotion}
	{Let $B^H=(B_t^H)_{t \geq 0}$ be a $d$-dimensional fractional $G$-Brownian motion \blue{of} Hurst \blue{index} $H \in (0,1)$} and fix $i=1,...,d$, $t \geq 0$. 
	Then 
	for each $\varphi \in C_{\blue{l,lip}}(\mathbb{R})$ the function
	\begin{equation*}
			\overline{u}_i(t,x):=\hat{\mathbb{E}}[\varphi(x+t^{-H+\frac{1}{2}}B_t^H(i))], \quad (t,x) \in [0, \infty) \times \mathbb{R}
	\end{equation*}
	is the unique viscosity solution of the following PDE
	\begin{equation} \label{eq:StandardGHeatEquation}
	\partial_t u -G_i(D_x^2u)=0, \quad u\vert_{t=0}= \varphi,
	\end{equation}
where $G_i: \mathbb{R} \to \mathbb{R}$ is defined in \eqref{eq:DefinitionGComponent}.
	\end{prop}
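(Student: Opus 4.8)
The plan is to reduce the PDE \eqref{eq:StandardGHeatEquation} to the classical $G$-heat equation characterization of the $G$-normal law by identifying the distribution of the rescaled random variable $t^{-H+\frac{1}{2}}B_t^H(i)$. By Lemma \ref{lemma:DistributionFractionalGBM}, for fixed $t>0$ and $i=1,\dots,d$,
\[
B_t^H(i)\sim \mathcal{N}\bigl(0,[\underline{\sigma}_i^2\, t^{2H},\overline{\sigma}_i^2\, t^{2H}]\bigr).
\]
Using the positive homogeneity built into Definition \ref{defi:G-NormalDistribution} --- namely that multiplying a $G$-normal variable by a scalar $\lambda>0$ rescales its uncertainty interval by $\lambda^2$ --- with $\lambda=t^{\frac{1}{2}-H}$, together with $\lambda^2 t^{2H}=t^{1-2H}t^{2H}=t$, we obtain
\[
t^{-H+\frac{1}{2}}B_t^H(i)\sim \mathcal{N}\bigl(0,[\underline{\sigma}_i^2\, t,\overline{\sigma}_i^2\, t]\bigr).
\]
This is exactly the one-dimensional law of $B_t(i)=B_t^{\textbf{e}_i}$, the $i$-th component of the $d$-dimensional $G$-Brownian motion, which by \eqref{eq:DefinitionGComponent} is a one-dimensional $G_i$-Brownian motion; equivalently $t^{-H+\frac{1}{2}}B_t^H(i)\sim \sqrt{t}\,B_1(i)$ with $B_1(i)$ a $G_i$-normal random variable. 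For $H=\frac{1}{2}$ the rescaling factor is $1$ and the identity $B_t^{1/2}(i)=B_t(i)$ is \eqref{eq:FractionalGBrownianMotionOurDefintionD2}, so the same conclusion holds trivially.

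I would then transfer this to the sublinear expectations. Since $\varphi\in C_{l,lip}(\mathbb{R})$ implies $\varphi(x+\cdot)\in C_{l,lip}(\mathbb{R})$ for every fixed $x\in\mathbb{R}$, and since identically distributed random variables have equal $G$-expectation against every test function in $C_{l,lip}$, the two displays above yield
\[
\overline{u}_i(t,x)=\hat{\mathbb{E}}\bigl[\varphi\bigl(x+t^{-H+\frac{1}{2}}B_t^H(i)\bigr)\bigr]=\hat{\mathbb{E}}\bigl[\varphi\bigl(x+\sqrt{t}\,B_1(i)\bigr)\bigr]
\]
for every $(t,x)\in[0,\infty)\times\mathbb{R}$, the value at $t=0$ being $\varphi(x)$, which matches the initial datum. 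It then remains to invoke the classical $G$-heat equation result: the function $(t,x)\mapsto\hat{\mathbb{E}}[\varphi(x+\sqrt{t}\,B_1(i))]$ attached to a one-dimensional $G_i$-normal distribution is the unique viscosity solution of $\partial_t u-G_i(D_x^2u)=0$, $u|_{t=0}=\varphi$; see \cite{peng_nonlinearExpectation_book}. The polynomial growth of $\varphi$ inherited from $C_{l,lip}(\mathbb{R})$ places $\overline{u}_i$ in the growth class in which both existence and uniqueness of the viscosity solution hold, and combining the two steps completes the argument.

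I do not expect a genuine difficulty here: the proof is essentially bookkeeping built on Lemma \ref{lemma:DistributionFractionalGBM} and Peng's PDE characterization of the $G$-normal law. The one point deserving care is that $\varphi$ is only ever evaluated at the single fixed time $t$, so the distributional identity of Lemma \ref{lemma:DistributionFractionalGBM} --- rather than any process-level or joint-in-time identification of $t^{-H+\frac{1}{2}}B^H(i)$ with $B(i)$ --- already suffices; one should also double-check the elementary exponent computation $\lambda^2 t^{2H}=t$ for $\lambda=t^{\frac{1}{2}-H}$ and the boundary case $t=0$.
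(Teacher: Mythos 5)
Your proof is correct and takes essentially the same route as the paper: both reduce $\overline{u}_i(t,x)$ to $\hat{\mathbb{E}}[\varphi(x+\sqrt{t}\,B_1(i))]$ and then invoke Peng's characterization of the $G_i$-heat equation (Example 2.2.14, and Theorem 3.1.3 for $H=\tfrac{1}{2}$, in \cite{peng_nonlinearExpectation_book}). The only cosmetic difference is that the paper gets the identification via self-similarity ($B_t^H(i)\sim t^H B_1^H(i)$, Proposition \ref{prop:SelfSimilar_and_StationarityIncrements}) combined with $B_1^H(i)\sim B_1(i)$ from Lemma \ref{lemma:DistributionFractionalGBM}, whereas you rescale the uncertainty interval of the $G$-normal law directly; the two arguments are equivalent.
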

	\begin{proof}
	Fix $t \ge 0$ and $i=1,...,d$. Let $H \in \left(0, \frac{1}{2} \right)\cup \left(\frac{1}{2},1 \right)$. By Lemma \ref{lemma:DistributionFractionalGBM} we get 
 	\begin{equation} \label{eq:RewriteValueFunction}
		\bar{u}_{\blue{i}}(t,x):=\hat{\mathbb{E}}[\varphi(x+t^{-H+\frac{1}{2}}B_t^H(i))]=\hat{\mathbb{E}}[\varphi(x+t^{H}t^{-H+\frac{1}{2}}B_1^H(i))]=\hat{\mathbb{E}}[\varphi(x+\sqrt{t}B_1(i))],
	\end{equation}	
	where the second equality follows since {$B_t^H(i) \sim t^H B_1^H(i)$ for $t \geq 0$} by Proposition \ref{prop:SelfSimilar_and_StationarityIncrements}.
	
	Therefore, if follows by Example 2.2.14 in \cite{peng_nonlinearExpectation_book} that \blue{$\bar{u}_i$} is the unique viscosity solution of the standard $G_i$-heat equation in \eqref{eq:StandardGHeatEquation}. Note that for $H= \frac{1}{2}$ the result follows by Theorem 3.1.3 in \cite{peng_nonlinearExpectation_book}.
	\end{proof}

\subsection{Increments of \blue{a} multi-dimensional {$G$-fBm}}
We now investigate some properties of the increments of the components of a $d$-dimensional fractional $G$-Brownian motion. {First of all we show the stationarity of the increments of each component.}

\begin{prop}\label{prop:statincr}
{Let $B^H=(B_t^H)_{t \geq 0}$ be a $d$-dimensional fractional $G$-Brownian motion \blue{of} Hurst \blue{index} $H \in (0,1)$.} For $H \in \ (0,\frac{1}{2}) \cup (\frac{1}{2},1)$ {and} $i=1,...,d$ 
$${B_{t+h}^H(i)-B_h^H(i)\sim B_t^H(i) \quad \text{for }t \geq 0}, \quad \blue{h>0}.$$
{For $H =\frac{1}{2}$, the process $B^H$ is a $G$-Brownian motion and has therefore stationary increments.}
\end{prop}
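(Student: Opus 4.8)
The plan is to split into the cases $H=\frac12$ and $H\in(0,\frac12)\cup(\frac12,1)$. The case $H=\frac12$ is immediate: since $B^{\frac12}=B$ is a $d$-dimensional $G$-Brownian motion, Definition \ref{defi:G-BrownainMotion} applied with $t=0$ shows that $B_t(i)\,t^{-1/2}$ is $G$-normally distributed, hence identically distributed to $(B_{t+h}(i)-B_h(i))\,t^{-1/2}$ (both being $G$-normal for the same $G_i$); since $X\sim Y$ implies $cX\sim cY$ for $c>0$, multiplying by $t^{1/2}$ gives $B_{t+h}(i)-B_h(i)\sim B_t(i)$, which is the stationarity of the increments. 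One may also quote Peng's monograph for this.

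For $H\in(0,\frac12)\cup(\frac12,1)$ the idea is to express the increment as a single $G$-It\^o integral with deterministic integrand and then invoke Lemma \ref{lemma:DistributionDeterministicFunction}. First I would write, using the linearity of $\eta\mapsto\int_0^{t+h}\eta_s\,dB_s(i)$ on $M_G^2(0,t+h)$ together with the fact that $K_H(h,\cdot)\textbf{1}_{[0,h]}$ vanishes on $(h,t+h]$,
\begin{equation*}
B_{t+h}^H(i)-B_h^H(i)=\int_0^{t+h} g_{t,h}(s)\,dB_s(i),\qquad g_{t,h}(s):=K_H(t+h,s)\textbf{1}_{[0,t+h]}(s)-K_H(h,s)\textbf{1}_{[0,h]}(s).
\end{equation*}
This is well-posed since $g_{t,h}$ is deterministic and square-integrable on $[0,t+h]$ (Remark \ref{remark:IntegralWellDefinedDimension}), so Lemma \ref{lemma:DistributionDeterministicFunction} applies and yields
\begin{equation*}
B_{t+h}^H(i)-B_h^H(i)\sim\mathcal{N}\big(0,[\underline{\sigma}_i^2 A_{t,h},\overline{\sigma}_i^2 A_{t,h}]\big),\qquad A_{t,h}:=\int_0^{t+h}g_{t,h}^2(s)\,ds.
\end{equation*}

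Next I would compute $A_{t,h}$. Expanding the square,
\begin{equation*}
A_{t,h}=\int_0^{t+h}K_H(t+h,s)^2\,ds-2\int_0^{h}K_H(t+h,s)K_H(h,s)\,ds+\int_0^{h}K_H(h,s)^2\,ds,
\end{equation*}
and applying the kernel identity \eqref{eq:IntegralKernels} to each of the three integrals gives
\begin{equation*}
A_{t,h}=(t+h)^{2H}-\big((t+h)^{2H}+h^{2H}-t^{2H}\big)+h^{2H}=t^{2H}.
\end{equation*}
On the other hand, Lemma \ref{lemma:DistributionFractionalGBM} gives $B_t^H(i)\sim\mathcal{N}(0,[\underline{\sigma}_i^2 t^{2H},\overline{\sigma}_i^2 t^{2H}])$, and since two $G$-normal laws with the same variance interval coincide, we conclude $B_{t+h}^H(i)-B_h^H(i)\sim B_t^H(i)$.

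I expect the only genuinely delicate point to be the justification of the first display, namely that the difference of the two $G$-It\^o integrals equals the integral of the difference of the integrands; this rests on the linearity and $L_G^2$-continuity of the $G$-It\^o integral, together with the observation that both kernels belong to $M_G^2(0,t+h)$. Everything else reduces to the elementary computation with \eqref{eq:IntegralKernels} carried out above.
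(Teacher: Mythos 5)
Your proof is correct and follows essentially the same route as the paper: both write $B_{t+h}^H(i)-B_h^H(i)$ as a single $G$-It\^{o} integral of the deterministic kernel difference $K_H(t+h,\cdot)-K_H(h,\cdot)$ and then apply Lemma \ref{lemma:DistributionDeterministicFunction}, matching the resulting law with that of $B_t^H(i)$ from Lemma \ref{lemma:DistributionFractionalGBM}. The only (harmless) deviation is in one step: you evaluate the variance $\int_0^{t+h}g_{t,h}^2(s)\,ds=t^{2H}$ directly by expanding the square and using \eqref{eq:IntegralKernels}, whereas the paper infers this identity indirectly from the stationarity of the increments of a classical fractional Brownian motion; your direct computation is equally valid and somewhat more self-contained.
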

\begin{proof}
First, note that a one-dimensional fractional Brownian motion $(\bar{B}^H_t)_{t \geq 0}$ on a standard probability space {$(\Omega, \mathcal{F}, {P})$} has stationary increments. In particular, $\bar{B}_{t+h}^H-\bar{B}_t^H$ has the same distribution as $\bar{B}_t^H$ for every $t,h>0$ {under $P$}. Let now $H \in (\frac{1}{2},1)$. Then, by the Volterra representation of a fractional Brownian motion and since $K_H(t,s)=0$ for \blue{$0 \leq t \leq s$}, we have
	\begin{align}
		\bar{B}_{t+h}^H-\bar{B}_h^H&=\int_{0}^{t+h} K_H(t+h,s) d\bar{B}_s - \int_{0}^{h} K_H(h,s) d\bar{B}_s \nonumber \\
		&=\int_0^{t+h} \left( K_H(t+h,s)-K_H(h,s) \right)d\bar{B}_s \label{eq:StationarityI},
	\end{align}
	 where $(\bar{B}_t)_{t \geq 0}$ denotes a standard Brownian motion. Therefore, applying Lemma \ref{lemma:DistributionDeterministicFunction} in the classical case when $\overline{\sigma}_i^2=\underline{\sigma}^2_i=1$, we get
	$$
	\bar{B}_{t+h}^H-\bar{B}_h^H \sim \bar{\mathcal{N}} \left( 0,  g(t+h)\right) \quad \text{ and } \quad \bar{B}_t^H \sim \bar{\mathcal{N}}(0,h(t))
	$$
	with $g(t+h):=\int_0^{t+h}  \left( K_H(t+h,s)-K_H(h,s) \right)^2 ds$, $h(t):=\int_0^t \blue{(K_H(t,s))^2} ds$ \blue{and where $\bar{\mathcal{N}}$ denotes the standard normal distribution}. Thus, by the stationarity of the increments of a fractional Brownian motion, it follows that 
	\begin{equation} \label{eq:StationarityII}
		g(t+h)=h(t).
	\end{equation}
 For the components of a $d$-dimensional fractional $G$-Brownian motion, by similar considerations as in \eqref{eq:StationarityI} and by Lemma \ref{lemma:DistributionDeterministicFunction} we get for $i=1,...,d$
	$$
	B_{t+h}^H(i)-B_h^H(i) \sim \mathcal{N} \left( 0, \left[ \underline{\sigma}_i^2 g(t+h), \overline{\sigma}_i^2 g(t+h)  \right]\right) 
	$$
	and 
	$$
	B_t^H(i) \sim \mathcal{N} \left( 0, \left[ \underline{\sigma}_i^2 h(t), \overline{\sigma}_i^2 h(t) \right]\right).
	$$
	Thus, \eqref{eq:StationarityII} implies that the components of {a} $d$-dimensional fractional $G$-Brownian motion have stationary increments. Similar arguments can be used for $H \in (0,\frac{1}{2})$. Moreover, for $H=\frac{1}{2}$ the result follows directly by the stationarity of the increments of \blue{a} $d$-dimensional $G$-Brownian motion.
\end{proof}

{
\begin{lemma}\label{lemma:condexpectationintegrals}
Fix $i, j=1,...,d$ with $i\neq j$ and $0 \le w \le s < r < u < t$. \blue{Let $B^H=(B_t^H)_{t \geq 0}$ be a $d$-dimensional fractional $G$-Brownian motion of Hurst index} $H \in (0,\frac{1}{2}) \cup (\frac{1}{2},1)$. Then it holds 
	\small{\begin{align}
	&\hat{\mathbb{E}}\left[\left(\int_w^r K_H(r,v)dB_v(j) -\int_w^s K_H(s,v)dB_v(j)\right)\left(\int_w^t K_H(t,v)dB_v(i) -\int_w^u K_H(u,v)dB_v(i)\right) \bigg| \Omega_w \right] \nonumber \\
	&\leq \frac{1}{8} \overline \sigma_{ij}^2\left((t-s)^{2H}-(u-s)^{2H}-(t-r)^{2H}+(u-r)^{2H} \right)\nonumber
	\intertext{and}
	&\hat{\mathbb{E}}\left[\left(\int_w^r K_H(r,v)dB_v({i}) -\int_w^s K_H(s,v)dB_v({i})\right)\left(\int_w^t K_H(t,v)dB_v(i) -\int_w^u K_H(u,v)dB_v(i)\right) \bigg| \Omega_w \right]\nonumber \\
	&=\overline \sigma_i^2 \left((t-s)^{2H}-(u-s)^{2H}-(t-r)^{2H}+(u-r)^{2H} \right) \nonumber \\ 
	& \quad -\overline{\sigma}_i^2 \int_0^w \left(K_H(r,v)- K_H(s,v)\right)\left(K_H(t,v)-K_H(u,v)\right) dv.\nonumber
	\end{align}}
	{Furthermore, it holds
	\small{
	\begin{align}
	&-\hat{\mathbb{E}}\left[-\left(\int_w^r K_H(r,v)dB_v(j) -\int_w^s K_H(s,v)dB_v(j)\right)\left(\int_w^t K_H(t,v)dB_v(i) -\int_w^u K_H(u,v)dB_v(i)\right) \bigg| \Omega_w \right] \nonumber \\
	&\geq \frac{1}{8} \underline \sigma_{ij}^2\left((t-s)^{2H}-(u-s)^{2H}-(t-r)^{2H}+(u-r)^{2H} \right)\nonumber
	\intertext{and}
	&-\hat{\mathbb{E}}\left[-\left(\int_w^r K_H(r,v)dB_v({i}) -\int_w^s K_H(s,v)dB_v({i})\right)\left(\int_w^t K_H(t,v)dB_v(i) -\int_w^u K_H(u,v)dB_v(i)\right) \bigg| \Omega_w \right]\nonumber \\
	&=\underline \sigma_i^2 \left((t-s)^{2H}-(u-s)^{2H}-(t-r)^{2H}+(u-r)^{2H} \right) \nonumber \\ 
	& \quad -\underline{\sigma}_i^2 \int_0^w \left(K_H(r,v)- K_H(s,v)\right)\left(K_H(t,v)-K_H(u,v)\right) dv. \nonumber
	\end{align}}}
\end{lemma}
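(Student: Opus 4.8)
The plan is to reduce all four estimates --- exactly along the lines of the proofs of Propositions~\ref{Prop:CovariationComponentwise} and~\ref{prop:sitj} --- to an integral against the (mutual) quadratic variation of $B$, which is then evaluated through the $G$-martingale property behind \eqref{eq:MartingaleProperty} and the kernel identity \eqref{eq:IntegralKernels}. First I would rewrite each of the two factors as a single $G$-It\^o integral: since $K_H(a,\cdot)\equiv 0$ on $(a,\infty)$, the first factor (with index $k\in\{i,j\}$) equals $\int_0^\infty\phi_v\,dB_v(k)$ with $\phi_v:=\bigl(K_H(r,v)-K_H(s,v)\bigr)\textbf{1}_{(w,r]}(v)$, and the second factor equals $\int_0^\infty\psi_v\,dB_v(i)$ with $\psi_v:=\bigl(K_H(t,v)-K_H(u,v)\bigr)\textbf{1}_{(w,t]}(v)$. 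Both kernels vanish on $[0,w]$, so the processes $X_\tau:=\int_0^\tau\phi\,dB(k)$ and $Y_\tau:=\int_0^\tau\psi\,dB(i)$ vanish at $\tau=w$, while for $T\ge t$ one has $X_T=\int_0^\infty\phi\,dB(k)$ and $Y_T=\int_0^\infty\psi\,dB(i)$, i.e. $X_TY_T$ is precisely the product appearing in the statement.

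Applying the $G$-It\^o formula (Theorem~3.6.3 in \cite{peng_nonlinearExpectation_book}) to $\tau\mapsto X_\tau Y_\tau$ on $[w,T]$ and using $X_w=Y_w=0$ gives
\begin{equation*}
X_TY_T=\int_w^T Y_v\phi_v\,dB_v(k)+\int_w^T X_v\psi_v\,dB_v(i)+\int_w^T\phi_v\psi_v\,d\langle B(k),B(i)\rangle_v.
\end{equation*}
After checking that $Y\phi$ and $X\psi$ belong to $M_G^2(0,T)$ (as one checks for the integrands in the proof of Proposition~\ref{Prop:CovariationComponentwise}), the first two terms are $G$-It\^o integrals whose integrands vanish on $[0,w]$, hence symmetric $G$-martingales vanishing at time $w$ (Proposition~3.3.6 in \cite{peng_nonlinearExpectation_book}); therefore they are annihilated by both $\hat{\mathbb{E}}[\,\cdot\mid\Omega_w]$ and $-\hat{\mathbb{E}}[-\,\cdot\mid\Omega_w]$, and
\begin{equation*}
\hat{\mathbb{E}}\bigl[X_TY_T\mid\Omega_w\bigr]=\hat{\mathbb{E}}\Bigl[\int_w^T\phi_v\psi_v\,d\langle B(k),B(i)\rangle_v\,\Big|\,\Omega_w\Bigr],
\end{equation*}
with the analogous identity for $-\hat{\mathbb{E}}[-\,\cdot\mid\Omega_w]$.

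For the same-index case $k=i$ I would add and subtract $2\int_w^T G_i(\phi_v\psi_v)\,dv$ and invoke that $\overline M_\tau=\int_0^\tau\phi_v\psi_v\,d\langle B(i)\rangle_v-2\int_0^\tau G_i(\phi_v\psi_v)\,dv$ is a $G$-martingale with $\overline M_w=0$ (see \eqref{eq:MartingaleProperty}); since the subtracted term is deterministic, $\hat{\mathbb{E}}[\int_w^T\phi\psi\,d\langle B(i)\rangle\mid\Omega_w]=2\int_w^T G_i(\phi_v\psi_v)\,dv$. On $(w,r]$ the product $\phi_v\psi_v$ has a definite sign --- for $H>\frac12$ it is positive because $K_H>0$ and $t\mapsto K_H(t,s)$ is increasing, and for $H<\frac12$ one determines the sign of $K_H(r,\cdot)-K_H(s,\cdot)$ separately on $(w,s]$ and on $(s,r]$ from the explicit form \eqref{eq:KernelVolterra} --- so $G_i(\phi_v\psi_v)=\frac12\overline{\sigma}_i^2\phi_v\psi_v$ and the conditional expectation equals $\overline{\sigma}_i^2\int_w^r\bigl(K_H(r,v)-K_H(s,v)\bigr)\bigl(K_H(t,v)-K_H(u,v)\bigr)dv$. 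Writing $\int_w^r=\int_0^\infty-\int_0^w$ and applying \eqref{eq:IntegralKernels} to each of the four products $K_H(a,v)K_H(b,v)$ with $a\in\{s,r\}$, $b\in\{u,t\}$ that occur in the $[0,\infty)$-integral produces the combination $(t-s)^{2H}-(u-s)^{2H}-(t-r)^{2H}+(u-r)^{2H}$, hence the stated equality. For $k=j\ne i$ I would instead use $\langle B(i),B(j)\rangle=\frac14\bigl(\langle B^{\textbf{e}_i+\textbf{e}_j}\rangle-\langle B^{\textbf{e}_i-\textbf{e}_j}\rangle\bigr)$ from \eqref{eq:CovariationMatrixGBrownianmotionD}, subadditivity of $\hat{\mathbb{E}}[\,\cdot\mid\Omega_w]$, and the $G_{\textbf{e}_i\pm\textbf{e}_j}$-martingales exactly as in the proof of Proposition~\ref{prop:sitj}; the sign of $\phi\psi$ turns the $G_{\textbf{e}_i\pm\textbf{e}_j}$-terms into the $\sup_\gamma/\inf_\gamma$ combinations defining $\overline{\sigma}_{ij}^2$, and \eqref{eq:IntegralKernels} yields the upper bound. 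The two lower estimates follow verbatim with $-\hat{\mathbb{E}}[-\,\cdot\mid\Omega_w]$ in place of $\hat{\mathbb{E}}[\,\cdot\mid\Omega_w]$: the stochastic-integral terms still drop, and for the variation integral one uses the $G$-martingale $\tau\mapsto 2\int_0^\tau\bigl(-G_i(-\phi_v\psi_v)\bigr)dv-\int_0^\tau\phi_v\psi_v\,d\langle B(i)\rangle_v$ (respectively superadditivity together with $-G_{\textbf{e}_i\pm\textbf{e}_j}(-\,\cdot)$ when $k\ne i$), which exchanges $\overline{\sigma}_i^2$ with $\underline{\sigma}_i^2$ and $\overline{\sigma}_{ij}^2$ with $\underline{\sigma}_{ij}^2$.

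The step I expect to be the main obstacle is the conditional-expectation bookkeeping: making rigorous that $Y\phi,X\psi\in M_G^2(0,T)$, so that the two $G$-It\^o integrals really are symmetric $G$-martingales that may be peeled off inside $\hat{\mathbb{E}}[\,\cdot\mid\Omega_w]$, and that $\overline M$ (and its ``lower'' counterpart) are $G$-martingales, so that conditioning on $\Omega_w$ collapses them to the value $0$ at time $w$ rather than merely to mean zero. A secondary but genuinely delicate point is the sign control of the Volterra-kernel differences $K_H(r,\cdot)-K_H(s,\cdot)$ and $K_H(t,\cdot)-K_H(u,\cdot)$ on the subintervals of $(w,r]$: immediate for $H>\frac12$, but for $H<\frac12$ requiring the explicit estimates contained in \eqref{eq:KernelVolterra}, since $G_i$ (and $G_{\textbf{e}_i\pm\textbf{e}_j}$) linearises only on intervals where $\phi\psi$ keeps a constant sign.
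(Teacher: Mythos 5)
Your proposal is correct and follows essentially the same route as the paper's proof: reduce the product to the integral $\int_w^r\left(K_H(r,v)-K_H(s,v)\right)\left(K_H(t,v)-K_H(u,v)\right)d\langle B(k),B(i)\rangle_v$ (the paper does this by first conditioning on $\Omega_r$ and using $K_H(s,v)=0$ for $v\ge s$, you do it by applying the $G$-It\^{o} product formula directly and annihilating the symmetric-martingale terms under $\hat{\mathbb{E}}[\,\cdot\mid\Omega_w]$), then evaluate it via the compensated $G$-martingale, the sign of the kernel product, the identity \eqref{eq:IntegralKernels}, and, for $i\ne j$, the polarization \eqref{eq:CovariationMatrixGBrownianmotionD} with subadditivity. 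The only substantive difference is that you flag the sign of the kernel differences for $H<\tfrac12$ as a delicate point, whereas the paper simply asserts pointwise positivity of the product; your caution is warranted, since for $H<\tfrac12$ the kernel is decreasing in its first argument and on $(s,r]$ one has $K_H(s,v)=0$, so the definite-sign claim needs the separate case analysis you indicate rather than being immediate.
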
}

\begin{proof}
{Consider first some general $i, j=1,...,d$. We have 
\allowdisplaybreaks
\small{
\begin{align}
& \hat{\mathbb{E}}\left[ \left(\int_w^r K_H(r,v)dB_v(j)-\int_w^s K_H(s,v)dB_v(j)\right)   \left(\int_w^t K_H(t,v)dB_v(i)-\int_w^u K_H(u,v)dB_v(i)\right) \bigg| \Omega_w \right] \nonumber \\
& = \hat{\mathbb{E}}\bigg[\hat{\mathbb{E}}\bigg[ \left(\int_w^r K_H(r,v)dB_v(j)-\int_w^s K_H(s,v)dB_v(j)\right)   \left(\int_w^r K_H(t,v)dB_v(i)-\int_w^r K_H(u,v)dB_v(i)\right)  \nonumber \\
&\quad + \left(\int_w^r K_H(r,v)dB_v(j)-\int_w^s K_H(s,v)dB_v(j)\right)   \left(\int_r^t K_H(t,v)dB_v(i)-\int_r^u K_H(u,v)dB_v(i)\right)\bigg| \Omega_r \bigg] \bigg| \Omega_w \bigg] \nonumber \\ 
& = \hat{\mathbb{E}}\bigg[\left(\int_w^r K_H(r,v)dB_v(j)-\int_w^s K_H(s,v)dB_v(j)\right)   \left(\int_w^r K_H(t,v)dB_v(i)-\int_w^r K_H(u,v)dB_v(i)\right)   \nonumber \\
&\quad + \hat{\mathbb{E}}\bigg[ \left(\int_w^r K_H(r,v)dB_v(j)-\int_w^s K_H(s,v)dB_v(j)\right)   \left(\int_r^t K_H(t,v)dB_v(i)-\int_r^u K_H(u,v)dB_v(i)\right)\bigg| \Omega_r \bigg] \bigg| \Omega_w\bigg]\label{eq:becausekerneldeterministic} \\ 
& = \hat{\mathbb{E}}\left[\left(\int_w^r K_H(r,v)dB_v(j)-\int_w^s K_H(s,v)dB_v(j)\right)  \int_w^r (K_H(t,v)-K_H(u,v))dB_v(i) \bigg| \Omega_w  \right] \label{eq:becausecondexpzero} \\
& = \hat{\mathbb{E}}\left[\left(\int_w^r K_H(r,v)dB_v(j)-\int_w^r K_H(s,v)dB_v(j)\right)  \int_w^r (K_H(t,v)-K_H(u,v))dB_v(i) \bigg| \Omega_w \right] \label{eq:becauseintegrandzero} \\
& = \hat{\mathbb{E}}\left[\int_w^r \left(K_H(r,v)- K_H(s,v)\right)dB_v(j)   \int_w^r \left(K_H(t,v)-K_H(u,v)\right)dB_v(i) \bigg| \Omega_w \right] \label{eq:forgeneraliandj},
\end{align}}
where \eqref{eq:becausekerneldeterministic} follows \blue{by Remark 3.2.4 in \cite{peng_nonlinearExpectation_book}} whereas \eqref{eq:becauseintegrandzero} comes from the fact that $K_H(s,v)=0$ for any $0 \le s \le v$, so that 
$$
\int_s^r K_H(s,v)dB_v(j)=0.
$$
Moreover, \eqref{eq:becausecondexpzero} follows because from Proposition 3.2.3(iv) in \cite{peng_nonlinearExpectation_book} we get
  \begin{align}
	&\hat{\mathbb{E}}\bigg[ \left(\int_w^r K_H(r,v)dB_v(j)-\int_w^s K_H(s,v)dB_v(j)\right)   \left(\int_r^t K_H(t,v)dB_v(i)-\int_r^u K_H(u,v)dB_v(i)\right)\bigg| \Omega_r \bigg] \nonumber \\ 
	&= \left(\int_w^r K_H(r,v)dB_v(j)-\int_w^s K_H(s,v)dB_v(j)\right)^+\hat{\mathbb{E}}\bigg[\int_r^t K_H(t,v)dB_v(i)-\int_r^u K_H(u,v)dB_v(i)\bigg|\Omega_{{r}}\bigg] \nonumber \\ & \quad + \left(\int_w^r K_H(r,v)dB_v(j)-\int_w^s K_H(s,v)dB_v(j)\right)^-\hat{\mathbb{E}}\bigg[-\int_r^t K_H(t,v)dB_v(i)+\int_r^u K_H(u,v)dB_v(i)\bigg|\Omega_{{r}}\bigg] \nonumber \\ & = 0 \nonumber,
 \end{align}
Suppose now $i=j$ {and set
 \begin{equation} \nonumber
 F(v):=F^{r,s,t,u}(v):=\left(K_H(r,v)- K_H(s,v)\right)\left(K_H(t,v)-K_H(u,v)\right), \quad v \ge 0.
 \end{equation}
}By \eqref{eq:forgeneraliandj} and the $G$-It\^{o}'s formula applied to the process 
 $$
(Y^{r,s,t,u,w}_z(i))_{z \geq w}=\left(\int_w^z \left(K_H(r,v)- K_H(s,v)\right)dB_v(j)   \int_w^z \left(K_H(t,v)-K_H(u,v)\right)dB_v(i)\right)_{z \ge w},
 $$
 it holds
\begin{align}
	&  \hat{\mathbb{E}}\left[ \left(\int_w^r K_H(r,v)dB_v(i)-\int_w^s K_H(s,v)dB_v(i)\right) \left(\int_w^t K_H(t,v)dB_v(i)-\int_w^u K_H(u,v)dB_v(i)\right) \bigg| \Omega_w \right]\nonumber \\ 
	&=\hat{\mathbb{E}}\left[  \int_w^r F(v)d\langle B(i) \rangle_v - 2\int_w^r G_i \left( F(v)\right)dv  \bigg| \Omega_w \right] + 2\int_w^r G_i \left( F(v)\right)dv \nonumber \\
	&= 2\int_w^r G_i \left( F(v)\right)dv \label{eq:noexpanymore} \\
	&= \overline{\sigma}_i^2 \int_w^r \left(K_H(r,v)- K_H(s,v)\right)\left(K_H(t,v)-K_H(u,v)\right) dv  \label{eq:onlyoverlinesigma} \\
	&= \overline{\sigma}_i^2 \int_0^r \left(K_H(r,v)- K_H(s,v)\right)\left(K_H(t,v)-K_H(u,v)\right) dv \nonumber \\ 
	& \quad- \overline{\sigma}_i^2 \int_0^w \left(K_H(r,v)- K_H(s,v)\right)\left(K_H(t,v)-K_H(u,v)\right) dv \nonumber \\
	&= \overline{\sigma}_i^2 \int_0^r K_H(r,v))\left(K_H(t,v)-K_H(u,v)\right) dv - \overline{\sigma}_i^2 \int_0^s K_H(s,v)\left(K_H(t,v)-K_H(u,v)\right) dv \nonumber \\ 
	& \qquad- \overline{\sigma}_i^2 \int_0^w \left(K_H(r,v)- K_H(s,v)\right)\left(K_H(t,v)-K_H(u,v)\right) dv \nonumber \\
	&=\overline{\sigma}_{i}^2 \left(t^{2H}-u^{2H}-(t-r)^{2H}+(u-r)^{2H} - \left(t^{2H}-u^{2H}-(t-s)^{2H}+(u-s)^{2H} \right) \right)\notag \\
	& \quad- \overline{\sigma}_i^2 \int_0^w \left(K_H(r,v)- K_H(s,v)\right)\left(K_H(t,v)-K_H(u,v)\right) dv \label{eq:finalfromincrements} \\
 	&=\overline \sigma_i^2 \left((t-s)^{2H}-(u-s)^{2H}-(t-r)^{2H}+(u-r)^{2H} \right) \nonumber \\ 
	& \quad -\overline{\sigma}_i^2 \int_0^w \left(K_H(r,v)- K_H(s,v)\right)\left(K_H(t,v)-K_H(u,v)\right) dv. \nonumber
\end{align}
	Here \eqref{eq:noexpanymore} holds since $\left(\int_0^s F(v) d \langle B(i)\rangle_v - 2 \int_0^s G_i(F(v))dv\right)_{s \ge 0}$ is a $G_i$-martingale {by Proposition 4.1.4 in  \cite{peng_nonlinearExpectation_book}}, and we get \eqref{eq:onlyoverlinesigma} since 
$$
 \left(K_H(r,v)- K_H(s,v)\right)\left(K_H(t,v)-K_H(u,v)\right)>0.
 $$ 
 Finally, \eqref{eq:finalfromincrements} follows directly from \eqref{eq:IntegralKernels}.
}	
	
\small{Suppose now $i \ne j$.  In this case, using similar arguments as above, we have
\allowdisplaybreaks
\begin{align}
	&  \hat{\mathbb{E}}\left[ \left(\int_w^r K_H(r,v)dB_v(j)-\int_w^s K_H(s,v)dB_v(j)\right)   \left(\int_w^t K_H(t,v)dB_v(i)-\int_w^u K_H(u,v)dB_v(i)\right) \bigg| \Omega_w \right]\nonumber \\ 
	&= \frac{1}{4} \hat{\mathbb{E}}\bigg[ \int_w^r  \left(K_H(r,v)- K_H(s,v)\right)\left( K_H(t,v) - K_H(u,v)\right) d\langle B(j)+B(i)\rangle_v \notag \\ 
	& \qquad \qquad \pm 2 \int_w^r G_{\textbf{e}_i+\textbf{e}_j}\left( \left(K_H(r,v)- K_H(s,v)\right)\left( K_H(t,v) - K_H(u,v)\right)\right)dv \nonumber \\
	&\quad \qquad +  \int_w^r - \left(K_H(r,v)- K_H(s,v)\right)\left( K_H(t,v) - K_H(u,v)\right) d\langle B(j)-B(i)\rangle_v \notag \\
	& \qquad \qquad \pm 2 \int_w^r G_{\textbf{e}_i-\textbf{e}_j}\left(- \left(K_H(r,v)- K_H(s,v)\right)\left( K_H(t,v) - K_H(u,v)\right)\right)dv \bigg| \Omega_w \bigg] \nonumber \\
	&\leq \frac{1}{2} \bigg( \int_w^r G_{\textbf{e}_i+\textbf{e}_j}\left( \left(K_H(r,v)- K_H(s,v)\right)\left( K_H(t,v) - K_H(u,v)\right)\right)dv \notag \\
	&\qquad \quad + \int_w^r G_{\textbf{e}_i-\textbf{e}_j}\left(- \left(K_H(r,v)- K_H(s,v)\right)\left( K_H(t,v) - K_H(u,v)\right)\right)dv  \bigg) \nonumber \\
	&= \frac{1}{4} \bigg[ \int_w^r \sup_{\gamma \in \Theta}(\gamma_{ii}+ \gamma_{jj}+ 2 \gamma_{ij}) \left(K_H(r,v)- K_H(s,v)\right)\left( K_H(t,v) - K_H(u,v)\right)dv \nonumber \\
	&\quad - \int_w^r \inf_{\gamma \in \Theta }(\gamma_{ii} + \gamma_{jj}-2 \gamma_{ij}) \left(K_H(r,v)- K_H(s,v)\right)\left( K_H(t,v) - K_H(u,v)\right)dv  \bigg]  \nonumber \\
	& = \frac{1}{8} \overline{\sigma}_{ij}^2  \int_w^r \left(K_H(r,v)-K_H(s,v)\right)\left( K_H(t,v) - K_H(u,v)\right)dv  \nonumber \\
	& \le \frac{1}{8} \overline{\sigma}_{ij}^2  \int_0^r \left(K_H(r,v)-K_H(s,v)\right)\left( K_H(t,v) - K_H(u,v)\right)dv  \nonumber \\
	& = \frac{1}{8} \overline{\sigma}_{ij}^2 \left( \int_0^rK_H(r,v)\left( K_H(t,v) - K_H(u,v)\right)dv -  \int_0^sK_H(s,v)\left( K_H(t,v) - K_H(u,v)\right)dv \right) \nonumber \\
	&=\frac{1}{8} \overline{\sigma}_{ij}^2 \left(t^{2H}-u^{2H}-(t-r)^{2H}+(u-r)^{2H} - \left(t^{2H}-u^{2H}-(t-s)^{2H}+(u-s)^{2H} \right) \right)\notag \\
	&=\frac{1}{8} \overline{\sigma}_{ij}^2 \left((t-s)^{2H}-(u-s)^{2H}-(t-r)^{2H}+(u-r)^{2H} \right)\notag.
\end{align}}

With similar arguments the two remaining results of the lemma follow.
\end{proof}

The next results regard the correlation between two increments of \blue{a} fractional $G$-Brownian motion. Their proofs \blue{for $H \in (0,\frac{1}{2}) \cup (\frac{1}{2},1)$} follow directly from Lemma \ref{lemma:condexpectationintegrals}. \blue{When $H=\frac{1}{2}$, the results can be proved using points (v) and (iv) of Proposition 3.2.3 in \cite{peng_nonlinearExpectation_book} together with Example 3.2.8 in \cite{peng_nonlinearExpectation_book}, in a similar way as in the proof of Proposition \ref{prop:sitj} for $H=\frac{1}{2}$.}
	
	\begin{cor}\label{prop:fracGincrements}
	Let $B^H=(B^H_t)_{t \geq 0}$ be a $d$-dimensional fractional $G$-Brownian motion of Hurst index $H \in \left( 0,1\right)$. Fix $0 \le s < r < u  < t$ \blue{and $i=1,...,d$}. Then for any $H \in (0,\frac{1}{2}) \cup (\frac{1}{2},1)$ it holds  
	\small{\begin{align}
	\hat{\mathbb{E}}\left[\left(B_r^H(i)-B_s^H(i)\right)\left(B_t^H(i)-B_u^H(i)\right)\right]&=\overline \sigma_i^2\left((t-s)^{2H}-(u-s)^{2H}-(t-r)^{2H}+(u-r)^{2H} \right)\label{eq:incrementsfirst}
	\intertext{and}
	-\hat{\mathbb{E}}\left[-\left(B_r^H(i)-B_s^H(i)\right)\left(B_t^H(i)-B_u^H(i)\right)\right]&=\underline \sigma_i^2\left((t-s)^{2H}-(u-s)^{2H}-(t-r)^{2H}+(u-r)^{2H} \right)\label{eq:incrementssecond}.
	\end{align}}
	For $H=\frac{1}{2}$, \blue{we have}
	\small{\begin{align}
	\hat{\mathbb{E}}\left[\left(B_r^{\frac{1}{2}}\blue{(i)}-B_s^{\frac{1}{2}}\blue{(i)}\right)\left(B_t^{\frac{1}{2}}\blue{(i)}-B_u^{\frac{1}{2}}\blue{(i)}\right)\right]= -\hat{\mathbb{E}}\left[-\left(B_r^{\frac{1}{2}}\blue{(i)}-B_s^{\frac{1}{2}}\blue{(i)}\right)\left(B_t^{\frac{1}{2}}\blue{(i)}-B_u^{\frac{1}{2}}\blue{(i)}\right)\right]=0.
	\end{align}}
	\end{cor}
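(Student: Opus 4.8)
The plan is to obtain the two identities for $H\in(0,\tfrac12)\cup(\tfrac12,1)$ as the special case $w=0$ of Lemma \ref{lemma:condexpectationintegrals}, and to treat $H=\tfrac12$ by a direct conditioning argument, as announced before the statement.

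For $H\in(0,\tfrac12)\cup(\tfrac12,1)$ and $i=1,\dots,d$, first I would note that by Definition \ref{def:FractionalGBrownianMotionOurDDimension} and since $K_H(s,v)=0$ for $v\ge s$ one has $B_r^H(i)-B_s^H(i)=\int_0^r K_H(r,v)dB_v(i)-\int_0^s K_H(s,v)dB_v(i)$ and $B_t^H(i)-B_u^H(i)=\int_0^t K_H(t,v)dB_v(i)-\int_0^u K_H(u,v)dB_v(i)$. Hence the products appearing in \eqref{eq:incrementsfirst} and \eqref{eq:incrementssecond} coincide with the products in the second and the fourth identities of Lemma \ref{lemma:condexpectationintegrals} for the choice $w=0$, which is admissible since the chain $0\le w\le s<r<u<t$ holds because $0\le s$. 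As $\Omega_0$ is trivial, $\hat{\mathbb{E}}[\,\cdot\,|\Omega_0]=\hat{\mathbb{E}}[\,\cdot\,]$, and the correction term $\overline\sigma_i^2\int_0^w(K_H(r,v)-K_H(s,v))(K_H(t,v)-K_H(u,v))dv$ (resp. with $\underline\sigma_i^2$) vanishes at $w=0$; so Lemma \ref{lemma:condexpectationintegrals} yields \eqref{eq:incrementsfirst} and \eqref{eq:incrementssecond} at once.

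For $H=\tfrac12$ we have $B^{\frac12}=B$, a $d$-dimensional $G$-Brownian motion, and $0\le s<r<u<t$. I would condition on $\Omega_u$ via the tower property, Proposition 3.2.3(v) in \cite{peng_nonlinearExpectation_book}. Since $B_r(i)-B_s(i)$ is $\Omega_u$-measurable, Proposition 3.2.3(iv) in \cite{peng_nonlinearExpectation_book} gives
\[\hat{\mathbb{E}}\big[(B_r(i)-B_s(i))(B_t(i)-B_u(i))\,\big|\,\Omega_u\big]=(B_r(i)-B_s(i))^+\,\hat{\mathbb{E}}[B_t(i)-B_u(i)\,|\,\Omega_u]+(B_r(i)-B_s(i))^-\,\hat{\mathbb{E}}[-(B_t(i)-B_u(i))\,|\,\Omega_u],\]
and both conditional expectations on the right-hand side are $0$, because $B_t(i)-B_u(i)$ is independent of $\Omega_u$ with $\hat{\mathbb{E}}[B_t(i)-B_u(i)]=\hat{\mathbb{E}}[-(B_t(i)-B_u(i))]=0$ by Example 3.2.8 in \cite{peng_nonlinearExpectation_book}. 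Applying $\hat{\mathbb{E}}$ to both sides then proves $\hat{\mathbb{E}}[(B_r^{\frac12}(i)-B_s^{\frac12}(i))(B_t^{\frac12}(i)-B_u^{\frac12}(i))]=0$, and running the same computation with the sign reversed gives $-\hat{\mathbb{E}}[-(B_r^{\frac12}(i)-B_s^{\frac12}(i))(B_t^{\frac12}(i)-B_u^{\frac12}(i))]=0$.

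Since all the analytic work is already contained in Lemma \ref{lemma:condexpectationintegrals}, I do not anticipate a genuine obstacle here; the only points deserving a line of care are verifying that $w=0$ is an admissible choice in that lemma, that conditioning on $\Omega_0$ collapses to the ordinary $G$-expectation, and the bookkeeping identifications $\int_0^r K_H(r,v)dB_v(i)=B_r^H(i)$ and $\int_0^s K_H(s,v)dB_v(i)=B_s^H(i)$ together with $K_H(s,v)=0$ for $v\ge s$.
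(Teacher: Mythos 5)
Your proposal is correct and matches the paper's intended argument: the paper derives \eqref{eq:incrementsfirst} and \eqref{eq:incrementssecond} directly from Lemma \ref{lemma:condexpectationintegrals} (your $w=0$ specialization, where the conditional expectation collapses and the correction integral vanishes), and handles $H=\tfrac{1}{2}$ exactly via points (iv) and (v) of Proposition 3.2.3 and Example 3.2.8 in \cite{peng_nonlinearExpectation_book}, as in the $H=\tfrac{1}{2}$ part of the proof of Proposition \ref{prop:sitj}. Your choice to condition on $\Omega_u$ rather than on the smaller time is an inessential variation of the same conditioning technique.
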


\begin{cor}
Let $B^H=(B^H_t)_{t \geq 0}$ be a $d$-dimensional fractional $G$-Brownian motion of Hurst index $H \in \left( 0,1\right)$. Fix $i, j=1,...,d$ with $i\neq j$ and $0 \le s < r < u < t$. Then for any $H \in (0,\frac{1}{2}) \cup (\frac{1}{2},1)$  it holds 
	\begin{align}
	&\hat{\mathbb{E}}\left[\left(B_r^H(j)-B_s^H(j)\right)\left(B_t^H(i)-B_u^H(i)\right)\right]\leq \frac{1}{8} \overline \sigma_{ij}^2\left((t-s)^{2H}-(u-s)^{2H}-(t-r)^{2H}+(u-r)^{2H} \right)\nonumber
	\intertext{and}
	&-\hat{\mathbb{E}}\left[-\left(B_r^H(j)-B_s^H(j)\right)\left(B_t^H(i)-B_u^H(i)\right)\right] \geq \frac{1}{8} \underline \sigma_{ij}^2\left((t-s)^{2H}-(u-s)^{2H}-(t-r)^{2H}+(u-r)^{2H} \right)\nonumber.
	\end{align}
		\blue{For $H=\frac{1}{2}$, we have
	\begin{align}
	\hat{\mathbb{E}}\left[\left(B_r^{\frac{1}{2}}(j)-B_s^{\frac{1}{2}}(j)\right)\left(B_t^{\frac{1}{2}}(i)-B_u^{\frac{1}{2}}(i)\right)\right] = -\hat{\mathbb{E}}\left[-\left(B_r^{\frac{1}{2}}(j)-B_s^{\frac{1}{2}}(j)\right)\left(B_t^{\frac{1}{2}}(i)-B_u^{\frac{1}{2}}(i)\right)\right]=0.\nonumber
	\end{align}
	}
\end{cor}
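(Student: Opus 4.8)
The plan is to obtain both statements as the special case $w=0$ of Lemma \ref{lemma:condexpectationintegrals}. First I would note that $\Omega_0$ is the trivial $\sigma$-algebra, so that $\hat{\mathbb{E}}[\,\cdot\,|\,\Omega_0]=\hat{\mathbb{E}}[\,\cdot\,]$, and that with $w=0$ the correction term $\int_0^w(\cdots)\,dv$ appearing in Lemma \ref{lemma:condexpectationintegrals} vanishes. Since $K_H(\tau,\cdot)$ is supported on $[0,\tau]$, the definition \eqref{eq:FractionalGBrownianMotionOurDefintionD} gives $\int_0^r K_H(r,v)\,dB_v(j)-\int_0^s K_H(s,v)\,dB_v(j)=B_r^H(j)-B_s^H(j)$ and likewise $\int_0^t K_H(t,v)\,dB_v(i)-\int_0^u K_H(u,v)\,dB_v(i)=B_t^H(i)-B_u^H(i)$. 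Plugging $w=0$ into the first and third displayed (in)equalities of Lemma \ref{lemma:condexpectationintegrals}, i.e.\ the ones for the case $i\ne j$, then yields exactly the two claimed bounds for $H\in(0,\tfrac12)\cup(\tfrac12,1)$.

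For $H=\tfrac12$ the process is, by \eqref{eq:FractionalGBrownianMotionOurDefintionD2}, the $d$-dimensional $G$-Brownian motion itself, so $B_r^{1/2}(j)-B_s^{1/2}(j)=B_r(j)-B_s(j)$ is $\Omega_u$-measurable (because $r<u$), while $B_t^{1/2}(i)-B_u^{1/2}(i)=B_t(i)-B_u(i)$ is independent of $\Omega_u$ with $\hat{\mathbb{E}}[\pm(B_t(i)-B_u(i))\,|\,\Omega_u]=0$. I would then condition on $\Omega_u$ using Proposition 3.2.3(v) in \cite{peng_nonlinearExpectation_book}, split the $\Omega_u$-measurable factor into its positive and negative parts via Proposition 3.2.3(iv), and apply Example 3.2.8 in \cite{peng_nonlinearExpectation_book}, exactly as in the $H=\tfrac12$ part of the proof of Proposition \ref{prop:sitj}; this gives $\hat{\mathbb{E}}[(B_r^{1/2}(j)-B_s^{1/2}(j))(B_t^{1/2}(i)-B_u^{1/2}(i))]=0$. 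Replacing the product by its negative and repeating the argument gives $-\hat{\mathbb{E}}[-(B_r^{1/2}(j)-B_s^{1/2}(j))(B_t^{1/2}(i)-B_u^{1/2}(i))]=0$.

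I do not expect any genuine obstacle here: the corollary is a direct specialization of Lemma \ref{lemma:condexpectationintegrals}, whose proof already performed the substantial work (the tower property, $G$-It\^o's formula, positivity of the Volterra kernels, and the covariance identity \eqref{eq:IntegralKernels}). The only points deserving a line of justification are that the difference of the two Volterra integrals is honestly the increment of the $G$-fBm — immediate from the support of $K_H$ — and, in the case $H=\tfrac12$, that the ordering $s<r<u<t$ makes $B_r^{1/2}(j)-B_s^{1/2}(j)$ measurable with respect to the conditioning $\sigma$-algebra, so that Proposition 3.2.3(iv) of \cite{peng_nonlinearExpectation_book} is applicable.
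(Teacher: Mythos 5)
Your proposal is correct and follows exactly the route the paper indicates: for $H \in (0,\tfrac12)\cup(\tfrac12,1)$ the corollary is the specialization $w=0$ of Lemma \ref{lemma:condexpectationintegrals} (using that the Volterra integral differences are the increments of $B^H$), and for $H=\tfrac12$ one conditions on $\Omega_u$ and argues with Proposition 3.2.3(iv),(v) and Example 3.2.8 of \cite{peng_nonlinearExpectation_book} as in the $H=\tfrac12$ part of the proof of Proposition \ref{prop:sitj}. No gaps.
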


	Using \eqref{eq:incrementsfirst} and \eqref{eq:incrementssecond}, we now show that the components of a $d$-dimensional fractional $G$-Brownian motion \blue{exhibit} long-range dependence as the classical fractional Brownian motion for \blue{$H \in (\frac{1}{2},1)$,} see Section 1.4 in \cite{biagini_hu_oksendal_zhang_2008}. To do so, we introduce \blue{an} autocovariance function in the $G$-expectation setting.

	\begin{defi} \label{defi:Autocovariance}
		Let $X=(X_n)_{n \in \mathbb{N}}$ be a stationary process. Then we define the \emph{autocovariance function} ${\rho}:=(\overline{\rho}, \underline{\rho})$ by
		\begin{align}
		\overline{\rho}(n)&:= \hat{\mathbb{E}}\left[ \left( X_k-\hat{\mathbb{E}}[X_k]\right) \left( X_{k+n}-\hat{\mathbb{E}}[X_{k+n}]\right)\right], \label{eq:AutoCovariance1}\\
		\underline{\rho}(n)&:=- \hat{\mathbb{E}}\left[- \left( X_k-\hat{\mathbb{E}}[X_k]\right) \left( X_{k+n}-\hat{\mathbb{E}}[X_{k+n}]\right)\right]. \label{eq:AutoCovariance2}
		\end{align}
\end{defi}
	We now generalize the definition of a long-memory process introduced in \cite{Beran_Terrin_1996}. 
\begin{defi} \label{defi:LongMemory}
		Let $X=(X_n)_{n \in \mathbb{N}}$ be a stationary process. Then $X$ satisfies the property of \emph{long memory} if the autocovariance function $\rho=(\overline{\rho}, \underline{\rho})$ in Definition \ref{defi:Autocovariance} satisfies  
	\begin{align*}
		\sum_{n=1}^{\infty} \underline{\rho}(n)= \infty, \quad \sum_{n=1}^{\infty} \overline{\rho}(n)= \infty.
	\end{align*}
	If on the contrary
	\begin{align*}
		\sum_{n=1}^{\infty} \underline{\rho}(n)< \infty, \quad \sum_{n=1}^{\infty} \overline{\rho}(n)< \infty.
	\end{align*}
	we say that $X$ has a \emph{short memory}.
\end{defi}
The following proposition is a direct implication of   \eqref{eq:incrementsfirst} and \eqref{eq:incrementssecond}.	
	\begin{prop}
	Let $B^H=(B^H_t)_{t \geq 0}$ be a $d$-dimensional fractional $G$-Brownian motion of Hurst index $H \in \left( 0,1\right)$. {For any $i=1, \dots, d$, the increments process $X(i)=(X_k(i))_{k\in \mathbb{N}}$ with $X_{k}(i):=B^H_k(i)-B^H_{k-1}(i)$, $k\in \mathbb{N}$, has} long memory for \blue{$H \in (\frac{1}{2},1)$}, and short memory for \blue{$H \in (0,\frac{1}{2}]$}. 
	 \end{prop}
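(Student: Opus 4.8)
The plan is to compute the autocovariance function of $X(i)$ in closed form for lags $n\ge 2$ and then deduce divergence or convergence of the defining series from the asymptotics of a second-order difference of $n\mapsto n^{2H}$. First I would use Proposition \ref{prop:statincr} to guarantee that $X(i)$ is stationary, so that Definition \ref{defi:Autocovariance} applies, and observe that $X_k(i)=B^H_k(i)-B^H_{k-1}(i)\sim B^H_1(i)$ for every $k\in\mathbb{N}$ by Proposition \ref{prop:statincr}. Since $\hat{\mathbb{E}}[B^H_1(i)]=-\hat{\mathbb{E}}[-B^H_1(i)]=0$ by Proposition \ref{Prop:CovariationComponentwise}, this gives $\hat{\mathbb{E}}[X_k(i)]=-\hat{\mathbb{E}}[-X_k(i)]=0$ for all $k$, so the autocovariance function reduces to $\overline{\rho}(n)=\hat{\mathbb{E}}[X_k(i)X_{k+n}(i)]$ and $\underline{\rho}(n)=-\hat{\mathbb{E}}[-X_k(i)X_{k+n}(i)]$, with no dependence on $k$.

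Next, for $n\ge 2$ I would apply Corollary \ref{prop:fracGincrements} with $s=k-1<r=k<u=k+n-1<t=k+n$. Setting $\psi_H(n):=(n+1)^{2H}-2n^{2H}+(n-1)^{2H}$, equations \eqref{eq:incrementsfirst} and \eqref{eq:incrementssecond} then yield, for $H\in(0,\frac12)\cup(\frac12,1)$,
\begin{equation*}
\overline{\rho}(n)=\overline{\sigma}_i^2\,\psi_H(n),\qquad \underline{\rho}(n)=\underline{\sigma}_i^2\,\psi_H(n).
\end{equation*}
The lag $n=1$ must be treated separately, since the strict ordering in Corollary \ref{prop:fracGincrements} degenerates there; but $X_k(i),X_{k+1}(i)\in L_G^2(\Omega)$ by Remark \ref{remark:IntegralWellDefinedDimension}, so $\overline{\rho}(1)$ and $\underline{\rho}(1)$ are finite and hence irrelevant for the convergence of $\sum_n\overline{\rho}(n)$ and $\sum_n\underline{\rho}(n)$. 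For $H=\frac12$, Corollary \ref{prop:fracGincrements} gives $\overline{\rho}(n)=\underline{\rho}(n)=0$ for $n\ge 2$, and a direct computation with conditional $G$-expectations along the lines of the proof of Proposition \ref{prop:sitj} for $H=\frac12$ (using parts (iv) and (v) of Proposition 3.2.3 in \cite{peng_nonlinearExpectation_book}) gives $\overline{\rho}(1)=\underline{\rho}(1)=0$ as well; hence both series are trivially finite and $X(i)$ has short memory.

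Finally, for $H\neq\frac12$ I would settle the asymptotics of $\psi_H(n)$. Writing $\phi(x)=x^{2H}$, two successive applications of the mean value theorem give $\psi_H(n)=\phi''(\xi_n)$ for some $\xi_n\in(n-1,n+1)$, and since $\phi''(x)=2H(2H-1)x^{2H-2}$ there are constants $0<c\le C$ with $c\,n^{2H-2}\le|\psi_H(n)|\le C\,n^{2H-2}$ for all large $n$, with $\psi_H(n)$ eventually positive when $H>\frac12$ and eventually negative when $H<\frac12$. If $H\in(\frac12,1)$, then $2H-2\in(-1,0)$, so $\sum_n n^{2H-2}=\infty$, and since $\psi_H$ is eventually positive and $\underline{\sigma}_i^2,\overline{\sigma}_i^2>0$, both $\sum_n\overline{\rho}(n)$ and $\sum_n\underline{\rho}(n)$ diverge, so by Definition \ref{defi:LongMemory} $X(i)$ has long memory. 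If $H\in(0,\frac12)$, then $2H-2<-1$, so $\sum_n n^{2H-2}<\infty$, hence $\sum_n|\overline{\rho}(n)|<\infty$ and $\sum_n|\underline{\rho}(n)|<\infty$, giving short memory. The only genuinely computational point is the two-sided bound on the second difference $\psi_H(n)$, which is elementary; the remaining care is bookkeeping, namely the degenerate cases $n=1$ and $H=\frac12$ and the sign tracking together with the non-degeneracy $\underline{\sigma}_i^2,\overline{\sigma}_i^2>0$.
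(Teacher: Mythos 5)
Your proposal is correct and follows essentially the same route as the paper: stationarity of the increments (Proposition \ref{prop:statincr}), the covariance formulas \eqref{eq:incrementsfirst}--\eqref{eq:incrementssecond} from Corollary \ref{prop:fracGincrements} to identify $\rho(n)$ with a multiple of the second difference $(n+1)^{2H}-2n^{2H}+(n-1)^{2H}$, and its $n^{2H-2}$ asymptotics to decide convergence of the series. Your extra bookkeeping (the degenerate lag $n=1$, the case $H=\tfrac12$, the mean-value-theorem bound, and the non-degeneracy of $\underline{\sigma}_i^2,\overline{\sigma}_i^2$) only makes explicit what the paper leaves implicit.
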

	 \begin{proof}
	 Note that the increments of \blue{a} fractional $G$-Brownian motion \blue{are} stationary by Proposition \blue{\ref{prop:statincr}}. Furthermore, we have
	\begin{align}
		\overline{\rho}(n)&=\hat{\mathbb{E}}\left[ \left( X_k(i)-\hat{\mathbb{E}}[X_k(i)]\right) \left( X_{k+n}(i)-\hat{\mathbb{E}}[X_{k+n}(i)]\right)\right] \nonumber \\
		&=\hat{\mathbb{E}}\left[ \left(B^H_k(i)-B^H_{k-1}(i)\right) \left(B^H_{k+n}(i)-B^H_{k+n-1}(i)\right)\right] \nonumber \\
		&=\hat{\mathbb{E}}\left[ B_1^H(i) \left(B^H_{n+1}(i)-B^H_{n}(i)\right)\right] \nonumber \\
		&= \frac{1}{2} \overline{\sigma}_i^2 \left( (n+1)^{2H}+(n-1)^{2H}-2n^{2H} \right), \nonumber
	\end{align}
	where the last equality follows directly from \eqref{eq:incrementsfirst}. In the same way, we get
	\begin{align}
		\underline{\rho}(n)= \frac{1}{2} \underline{\sigma}_i^2 \left( (n+1)^{2H}+(n-1)^{2H}-2n^{2H} \right)\blue{.} \nonumber
	\end{align}
	The result then follows since for large $n$ we have
	\begin{equation} \nonumber
		\frac{1}{2}\left( (n+1)^{2H}+(n-1)^{2H}-2n^{2H} \right) \sim H(2H-1) n^{2H-2}.
	\end{equation}
	\end{proof}
{We now show that our approach coincides with the one of \cite{fBrownianMotionGSetting1} in the one-dimensional case.}
{\begin{lemma} \label{lemma:DefinitionCorrespondsD1}
Let $B^H=(B^H_t)_{t \geq 0}$ be a one-dimensional fractional $G$-Brownian motion of Hurst index $H \in \left( 0,1\right)$ given in Definition \ref{def:FractionalGBrownianMotionOurDDimension}. Then, any continuous modification of $B^H$ is a one-dimensional fractional $G$-Brownian motion in the sense of Definition \ref{def:fBrownianMotionSetting1}. 
 \end{lemma}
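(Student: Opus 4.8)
The plan is to fix a continuous modification $\tilde B^H$ of $B^H$ and verify the four conditions of Definition \ref{def:fBrownianMotionSetting1} one at a time, each of them reducing to a result already established in this section. The preliminary observation is that passing to a modification does not affect the sublinear finite-dimensional distributions: since $\tilde B^H_t = B^H_t$ quasi-surely for every $t \ge 0$ and $\hat{\mathbb{E}}[X] = \sup_{P \in \mathcal{P}} E^P[X]$ with every $P \in \mathcal{P}$ vanishing on polar sets, one has $\hat{\mathbb{E}}[\varphi(\tilde B^H_{t_1},\dots,\tilde B^H_{t_n})] = \hat{\mathbb{E}}[\varphi(B^H_{t_1},\dots,B^H_{t_n})]$ for all $\varphi \in C_{l,lip}(\mathbb{R}^n)$ and all $t_1,\dots,t_n \ge 0$. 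Hence every distributional statement proved for $B^H$ transfers verbatim to $\tilde B^H$, while continuity of $\tilde B^H$ holds by construction.

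Conditions 1 and 2 are then immediate from Proposition \ref{Prop:CovariationComponentwise} specialised to $d=1$ (and $i=1$): it yields $B^H_0 = 0$, $\hat{\mathbb{E}}[B^H_t] = -\hat{\mathbb{E}}[-B^H_t] = 0$, and precisely the covariance identities \eqref{eq:CovarianceChen1}--\eqref{eq:CovarianceChen2} with $\overline\sigma^2 = \overline\sigma_1^2$ and $\underline\sigma^2 = \underline\sigma_1^2$. Evaluating \eqref{eq:VarianceStructureDDimension1}--\eqref{eq:VarianceStructureDDimension2} at $s=t=1$, or invoking Lemma \ref{lemma:DistributionFractionalGBM} at $t=1$ (which gives $B^H_1 \sim B_1$), identifies these constants as $\overline\sigma_1^2 = \hat{\mathbb{E}}[(B^H_1)^2]$ and $\underline\sigma_1^2 = -\hat{\mathbb{E}}[-(B^H_1)^2]$, matching the normalisation required in Definition \ref{def:fBrownianMotionSetting1}. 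Condition 3, i.e. $B^H_{t+s} - B^H_s \sim B^H_t$, is exactly Proposition \ref{prop:statincr}, the case $s=0$ being trivial and $H = \tfrac12$ being covered there through the $G$-Brownian motion case.

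For condition 4 I would combine self-similarity (Proposition \ref{prop:SelfSimilar_and_StationarityIncrements}), which for $t>0$ gives $B^H_t \sim t^H B^H_1$, with Lemma \ref{lemma:DistributionFractionalGBM} at $t=1$, so that $B^H_1 \sim B_1$ is $G_1$-normally distributed. Consequently $\hat{\mathbb{E}}[|B^H_t|^m] = t^{mH}\,\hat{\mathbb{E}}[|B_1|^m]$ for every $m \in \mathbb{N}$, and $\hat{\mathbb{E}}[|B_1|^m] < \infty$ since $|B_1|^m = \psi(B_1)$ with $\psi(x) = |x|^m \in C_{l,lip}(\mathbb{R})$, hence $|B_1|^m \in Lip(\Omega) \subseteq L^1_G(\Omega)$. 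Therefore $\hat{\mathbb{E}}[|B^H_t|^m]\,t^{-2H} = t^{(m-2)H}\,\hat{\mathbb{E}}[|B_1|^m] \longrightarrow 0$ as $t \downarrow 0$ whenever $m \ge 3$, because then $(m-2)H > 0$. This closes the verification of all four conditions. The argument is essentially an assembly of previously proved facts; the only genuinely non-routine step is the bookkeeping that a continuous modification preserves the sublinear finite-dimensional distributions, and the small scaling/finiteness argument needed for condition 4, neither of which I expect to pose a real obstacle.
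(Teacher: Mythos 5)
Your proof is correct and follows essentially the same route as the paper: it verifies the four conditions of Definition \ref{def:fBrownianMotionSetting1} via Proposition \ref{Prop:CovariationComponentwise}, Lemma \ref{lemma:DistributionFractionalGBM}, Proposition \ref{prop:statincr}, and the self-similarity scaling argument for the $m$-th moments, exactly as the paper does. Your two small additions only tighten the argument: the explicit remark that a continuous modification preserves the sublinear finite-dimensional distributions (the paper leaves this implicit), and taking the limit $t\downarrow 0$ with exponent $(m-2)H>0$, which matches condition 4 as stated, whereas the paper's own computation carries a sign typo in the exponent and writes the limit as $t\to\infty$.
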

 \begin{proof}
 By Proposition \ref{Prop:CovariationComponentwise} and Lemma \ref{lemma:DistributionFractionalGBM} it follows that $B^H$ satisfies the Points 1. and 2. in Definition \ref{def:fBrownianMotionSetting1}. Moreover, by Proposition \ref{prop:statincr} Point 3. follows. Finally, for $m \in \mathbb{N}$ and $m \geq 3$ we have
		\begin{align}
			\hat{\mathbb{E}} \left[\vert B^{H}_t\vert^m\right]t^{-2H}&=\hat{\mathbb{E}} \left[\vert B^{H}_1 t^{H}\vert^m\right]t^{-2H}= \hat{\mathbb{E}} \left[\vert B^{H}_1 \vert^m\right]t^{H(2-m)} \nonumber \\
			& = \hat{\mathbb{E}} \left[\vert B_1 \vert^m\right]t^{H(2-m)} \leq C_m t^{H(2-m)} \to 0 \quad \text{ for }t \to \infty,
		\end{align}
		where we used Exercise 3.10.1 in \cite{peng_nonlinearExpectation_book} for the last inequality \blue{and again Lemma \ref{lemma:DistributionFractionalGBM} for the last equality}. 
		This shows that in the one-dimensional case $B^H$ is a fractional $G$-Brownian motion in the sense of Definition \ref{def:fBrownianMotionSetting1}.
\end{proof}}

\subsection{Sample paths properties of \blue{a} multi-dimensional {$G$-fBm}}
From now on, we denote the Euclidean norm by $\lVert \cdot \rVert$.

\begin{prop} \label{prop:PathsHoelder}
{Let $B^H=(B^H_t)_{t \geq 0}$ be a $d$-dimensional fractional $G$-Brownian motion of Hurst \blue{index} $H \in \left( 0,1\right)$.} Consider $q \in \mathbb{R}^+$ such that {$q\geq 2H$}. Then for any $0<s<t$ it holds
\begin{align}\nonumber
	\hat{\mathbb{E}}\left[ \lVert B_t^H-B_s^H \rVert^{\frac{q}{H}}\right]  \le C_{q,H} (t-s)^q, 
\end{align}
for a given constant $C_{q,H}>0$.
{In particular, for any $\alpha < H$, $B^H$ admits a modification $\tilde{B}^H$ {which} has quasi-surely H\"older continuous paths of order $\alpha$.} 
\end{prop}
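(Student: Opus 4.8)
The plan is to prove the moment estimate by reducing it, component by component, to a single moment of the $G$-normal law, and then to obtain the H\"older modification from a Kolmogorov--Chentsov argument carried out against the capacity $c$ rather than against a single probability measure.

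First I would fix $0<s<t$, $i\in\{1,\dots,d\}$ and set $p:=q/H\ge 2$. Chaining the distributional identities already at our disposal, namely $B_t^H(i)-B_s^H(i)\sim B_{t-s}^H(i)$ by the stationarity of the increments (Proposition \ref{prop:statincr}), $B_{t-s}^H(i)\sim (t-s)^H B_1^H(i)$ by self-similarity (Proposition \ref{prop:SelfSimilar_and_StationarityIncrements}), and $B_1^H(i)\sim B_1(i)$ by Lemma \ref{lemma:DistributionFractionalGBM}, and using that $x\mapsto|x|^{p}\in C_{l,lip}(\mathbb{R})$ for $p\ge1$, one gets
$$\hat{\mathbb{E}}\big[|B_t^H(i)-B_s^H(i)|^{p}\big]=(t-s)^{pH}\,\hat{\mathbb{E}}\big[|B_1(i)|^{p}\big]=(t-s)^{q}\,\hat{\mathbb{E}}\big[|B_1(i)|^{p}\big],$$
where $\hat{\mathbb{E}}[|B_1(i)|^{p}]<\infty$ since all moments of a $G$-normal random variable are finite (Exercise 3.10.1 in \cite{peng_nonlinearExpectation_book}). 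Then I would pass to the Euclidean norm by $\|x\|^{p}\le d^{\,p-1}\sum_{i=1}^{d}|x_i|^{p}$ (power-mean inequality, valid since $p\ge1$) together with sub-additivity and positive homogeneity of $\hat{\mathbb{E}}$, obtaining
$$\hat{\mathbb{E}}\big[\|B_t^H-B_s^H\|^{p}\big]\le d^{\,p-1}\sum_{i=1}^{d}\hat{\mathbb{E}}\big[|B_t^H(i)-B_s^H(i)|^{p}\big]\le d^{\,p}\Big(\max_{1\le i\le d}\hat{\mathbb{E}}\big[|B_1(i)|^{p}\big]\Big)(t-s)^{q},$$
which is the claimed bound with $C_{q,H}:=d^{\,q/H}\max_{i}\hat{\mathbb{E}}[|B_1(i)|^{q/H}]$. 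This argument is uniform in $H\in(0,1)$, the case $H=\frac12$ being covered since there $B^H$ is a $G$-Brownian motion with the same stationarity, self-similarity and distributional properties.

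For the modification, given $\alpha<H$ I would choose $q$ so large that $q\ge 2H$, $q>1$ and $H(1-1/q)>\alpha$ (possible because $H(1-1/q)\uparrow H$ as $q\to\infty$). With $\beta:=q/H$ the first part gives $\hat{\mathbb{E}}[\|B_t^H-B_s^H\|^{\beta}]\le C_{q,H}(t-s)^{1+(q-1)}$ with exponent $1+(q-1)>1$. Since $\hat{\mathbb{E}}[\cdot]=\sup_{P\in\mathcal{P}}E^{P}[\cdot]$, this estimate holds with the \emph{same} constant for every $P\in\mathcal{P}$ simultaneously, so one can run the classical dyadic proof of the Kolmogorov--Chentsov continuity theorem on each interval $[0,n]$ but with the Choquet capacity $c$ in place of a fixed measure: a Chebyshev inequality for $\hat{\mathbb{E}}$ combined with a Borel--Cantelli argument for $c$ shows that the set of $\omega$ for which $t\mapsto B_t^H(\omega)$, restricted to dyadic rationals, fails to be $\alpha$-H\"older on $[0,n]$ is polar; on its complement one defines $\tilde B^H$ by uniform-continuity extension, and $\tilde B_t^H=B_t^H$ quasi-surely for each $t$ because $\hat{\mathbb{E}}[\|B_{t_k}^H-B_t^H\|^{\beta}]\to0$ along dyadics $t_k\to t$.

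The step I expect to be the main obstacle is precisely this last one: one must be careful that the exceptional set is genuinely polar, i.e. negligible for \emph{all} $P\in\mathcal{P}$ at once and not merely for each $P$ separately, which is where the uniformity of the moment bound over $\mathcal{P}$ (equivalently, the fact that it is an estimate on $\hat{\mathbb{E}}$ and not on a single $E^{P}$) is essential. Everything else is routine bookkeeping on top of the stationarity, self-similarity and distributional results established earlier.
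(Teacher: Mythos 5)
Your proof is correct, and for the moment estimate it follows the same backbone as the paper: stationarity of increments (Proposition \ref{prop:statincr}), self-similarity (Proposition \ref{prop:SelfSimilar_and_StationarityIncrements}) and the identity $B_1^H(i)\sim B_1(i)$ (Lemma \ref{lemma:DistributionFractionalGBM}) reduce everything to a moment of $B_1$. The difference is in how that reduction is organized and closed. The paper applies the distributional identities directly inside the vector functional $\hat{\mathbb{E}}\bigl[\bigl(\sum_{i=1}^d(B_t^H(i)-B_s^H(i))^2\bigr)^{q/2H}\bigr]$ and then bounds $\hat{\mathbb{E}}[\lVert B_1\rVert^{q/H}]$ by invoking equation (4.1) of \cite{geng_qian_yang_2013} (this is where $q\ge 2H$ enters there); you instead argue component by component, pass to the Euclidean norm via the power-mean inequality and sublinearity of $\hat{\mathbb{E}}$, and close with the finiteness of all moments of a $G$-normal variable (Exercise 3.10.1 in \cite{peng_nonlinearExpectation_book}), the same fact the paper itself uses in Lemma \ref{lemma:DefinitionCorrespondsD1}. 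Your componentwise route is slightly more self-contained and, arguably, more faithful to what is actually proved earlier, since Propositions \ref{prop:SelfSimilar_and_StationarityIncrements} and \ref{prop:statincr} are stated componentwise, whereas using them inside a function of the whole vector implicitly requires the joint (vector) distributional identity; the price is only a dimension-dependent constant, which is harmless. For the modification, the paper states the conclusion without proof, while you spell out the capacity version of the Kolmogorov--Chentsov argument (Chebyshev for $\hat{\mathbb{E}}$, Borel--Cantelli via countable subadditivity of $c$, extension from dyadics, and identification $\tilde B^H_t=B^H_t$ q.s. through convergence in capacity); your choice of exponents, giving H\"older order up to $H(1-1/q)\uparrow H$, is exactly what is needed, so this part is a genuine and welcome addition rather than a deviation.
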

\begin{proof} 
	Fix $H \in \left(0,1\right)$, $0<s<t$ and $q\in \mathbb{R}^+$ such that {$q\geq 2H$}. We have
\begin{align}
	\hat{\mathbb{E}}\left[ \lVert B_t^H-B_s^H \rVert^{\frac{q}{H}}\right] &= \hat{\mathbb{E}}\left[ \left (\sum_{i=1}^d(B_t^H(i)-B_s^H(i))^2  \right)^{\frac{q}{2H}}\right] \nonumber\\
	&\blue{= \hat{\mathbb{E}}\left[ \left (\sum_{i=1}^d ( B_{t-s}^H(i))^2  \right)^{\frac{q}{2H}}\right]}\label{eq:Paths1Ddim} \\
	&= \hat{\mathbb{E}}\left[ \left (\sum_{i=1}^d ((t-s)^H B_1^H(i))^2  \right)^{\frac{q}{2H}}\right]\label{eq:Paths2Ddim} \\
	%&= \hat{\mathbb{E}}\left[ \left (\sum_{i=1}^d (B_1^H(i))^2  \right)^{\frac{q}{2H}}\right](t-s)^q \nonumber\\
	&= \hat{\mathbb{E}}\left[ \left (\sum_{i=1}^d (B_1(i))^2  \right)^{\frac{q}{2H}}\right](t-s)^q \label{eq:Paths3Ddim}\\
	&= \hat{\mathbb{E}}\left[  \lVert  B_1-B_0  \rVert^{\frac{q}{H}}\right](t-s)^q \nonumber\\
	&\le C_{q,H} (t-s)^q,\label{eq:Paths4Ddim}
\end{align}
where \eqref{eq:Paths1Ddim}, \eqref{eq:Paths2Ddim} and \eqref{eq:Paths3Ddim} follow by Proposition \ref{prop:statincr}, Proposition \ref{prop:SelfSimilar_and_StationarityIncrements} and from the results in the proof of Proposition \ref{prop:PDEFractionalBrownianMotion}, respectively. Moreover, we get 
\eqref{eq:Paths4Ddim} by equation (4.1) of  \cite{geng_qian_yang_2013} using  {$q\geq 2H$}.
\end{proof}
\begin{lemma} \label{lemma:ConvergenceLemmaSquared}
	{Let $B^H=(B^H_t)_{t \geq 0}$ be a $d$-dimensional fractional $G$-Brownian motion of Hurst \blue{index} $H \in \left( 0,1\right)$.} \blue{Let $\blue{\pi_t^N}=\lbrace t_0^N,\dots, t_{N}^N \rbrace$, $N \in \mathbb{N}$,} be a \blue{sequence of partitions} of $[0,t]$ such that the mesh size \blue{$\mu(\blue{\pi_t^N})$} converges to $0$ for $N \to \infty$. Then for $p>\frac{1}{H}$  it holds
	\begin{align} \label{eq:ConvegenceL1Capacity}
		 \lim_{\blue{N} \to \infty}\sum_{i=0}^{\blue{N}-1} \vert| B_{t_{i+1}^{\blue{N}}}^H-B_{t_i^{\blue{N}}}^H\vert|^p =0  \quad \text{ in } L^1 \text{ and  capacity.}
	\end{align}
\end{lemma}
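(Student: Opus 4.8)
The plan is to derive the result from the single-increment moment estimate of Proposition \ref{prop:PathsHoelder} by using the subadditivity of $\hat{\mathbb{E}}$ together with a Riemann-sum estimate exploiting $pH>1$, and then to upgrade the resulting $L^1_G$-convergence to convergence in capacity via a Markov-type inequality. The one point requiring a little care is that Proposition \ref{prop:PathsHoelder} controls only the $L^r_G$-moments of the increments for $r\ge 2$ (it is stated for exponents of the form $q/H$ with $q\ge 2H$), whereas when $H>\tfrac12$ the exponent $p$ may lie in $(\tfrac1H,2)$. I would circumvent this by setting $r:=\max\{p,2\}\ge 2$ and $q:=rH\ge 2H$, so that Proposition \ref{prop:PathsHoelder} gives, for all $0<s<t$,
\[
\hat{\mathbb{E}}\big[\lVert B^H_t-B^H_s\rVert^{r}\big]\le C_{rH,H}\,(t-s)^{rH}.
\]
Since $p\le r$, Hölder's inequality for the $G$-expectation (equivalently, monotonicity of $\rho\mapsto(\hat{\mathbb{E}}[|\cdot|^{\rho}])^{1/\rho}$, which follows immediately from the representation $\hat{\mathbb{E}}=\sup_{P\in\mathcal{P}}E^P$ and classical Jensen) then yields
\[
\hat{\mathbb{E}}\big[\lVert B^H_t-B^H_s\rVert^{p}\big]\le\big(\hat{\mathbb{E}}\big[\lVert B^H_t-B^H_s\rVert^{r}\big]\big)^{p/r}\le (C_{rH,H})^{p/r}(t-s)^{pH}.
\]

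Next I would combine this with subadditivity. Write $S_N:=\sum_{i=0}^{N-1}\lVert B^H_{t_{i+1}^N}-B^H_{t_i^N}\rVert^{p}$; each summand lies in $L^1_G(\Omega_t)$ (the increments are $G$-It\^o integrals of deterministic kernels and hence, by Lemma \ref{lemma:DistributionDeterministicFunction}, have all polynomial moments, while $x\mapsto\lVert x\rVert^p\in C_{l,lip}(\mathbb{R}^d)$ since $p>\tfrac1H>1$), so $S_N\in L^1_G(\Omega_t)$. Using subadditivity of $\hat{\mathbb{E}}$ and the bound above,
\[
\hat{\mathbb{E}}[S_N]\le\sum_{i=0}^{N-1}\hat{\mathbb{E}}\big[\lVert B^H_{t_{i+1}^N}-B^H_{t_i^N}\rVert^{p}\big]\le (C_{rH,H})^{p/r}\sum_{i=0}^{N-1}(t_{i+1}^N-t_i^N)^{pH}.
\]
Since $p>\tfrac1H$ we have $pH-1>0$, so $(t_{i+1}^N-t_i^N)^{pH}\le\mu(\pi_t^N)^{pH-1}(t_{i+1}^N-t_i^N)$, and therefore $\hat{\mathbb{E}}[S_N]\le (C_{rH,H})^{p/r}\,t\,\mu(\pi_t^N)^{pH-1}\to 0$ as $N\to\infty$ because $\mu(\pi_t^N)\to 0$. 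As $S_N\ge 0$ this is exactly $\hat{\mathbb{E}}[|S_N-0|]\to 0$, i.e. $S_N\to 0$ in $L^1_G$.

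Finally, convergence in capacity follows from the $L^1_G$-convergence: for every $\varepsilon>0$ and every $P\in\mathcal{P}$, Markov's inequality gives $P(S_N\ge\varepsilon)\le\varepsilon^{-1}E^P[S_N]\le\varepsilon^{-1}\hat{\mathbb{E}}[S_N]$, whence $c(S_N\ge\varepsilon)=\sup_{P\in\mathcal{P}}P(S_N\ge\varepsilon)\le\varepsilon^{-1}\hat{\mathbb{E}}[S_N]\to 0$. The only genuine obstacle is the exponent bookkeeping described above: one must notice that Proposition \ref{prop:PathsHoelder} cannot be invoked directly at exponent $p$ when $p<2$ (which can occur for $H>\tfrac12$) and must instead first pass through an $L^r_G$-moment with $r\ge 2$ by monotonicity of the $L^r_G$-norms; once the estimate $\hat{\mathbb{E}}[\lVert B^H_t-B^H_s\rVert^p]\le C(t-s)^{pH}$ is available, the rest is the standard mesh-refinement estimate together with the usual Markov inequality for the capacity.
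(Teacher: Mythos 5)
Your proof is correct and follows essentially the same route as the paper: subadditivity of $\hat{\mathbb{E}}$, the increment moment bound from Proposition \ref{prop:PathsHoelder}, the mesh-refinement estimate using $pH-1>0$, and the fact that $L^1$-convergence implies convergence in capacity (which the paper cites from the literature and you re-derive via Markov's inequality under each $P\in\mathcal{P}$). Your detour through $r=\max\{p,2\}$ and the monotonicity of the $L^r_G$-norms is a sensible refinement rather than a different approach: the paper invokes Proposition \ref{prop:PathsHoelder} directly at exponent $p$, which strictly speaking requires $p\ge 2$, so your bookkeeping closes a small gap that the paper's proof leaves implicit when $H>\frac{1}{2}$ and $p\in(\frac{1}{H},2)$.
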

\begin{proof}
	It is enough to show that $\lim_{\blue{N} \to \infty}\sum_{i=1}^{\blue{N}} \| B_{t_{i+1}^{\blue{N}}}^H-B_{t_i^{\blue{N}}}^H\|^p =0 $ in $L^1$ as $L^1$-convergence implies convergence in capacity by Point (c), page 5 in \cite{hu_zhou_2018}. It holds 
	\begin{align}
\hat{\mathbb{E}} \left[ \sum_{i=0}^{\blue{N}-1} \left \| B_{t_{i+1}^{\blue{N}}}^H-B_{t_i^{\blue{N}}}^H\right \|^p\right]	& \leq \sum_{i=0}^{\blue{N}-1} \hat{\mathbb{E}}  \left[ \left \| B_{t_{i+1}^{\blue{N}}}^H-B_{t_i^{\blue{N}}}^H\right \|^p\right]\nonumber \\
& \leq C_H \sum_{i=0}^{\blue{N}-1}  \left \vert  t_{i+1}^{\blue{N}} -t_i^{\blue{N}} \right \vert^{p{H}} \label{eq:L1Convergence1}  \\
& {=} C_H \blue{\cdot \left(\mu(\blue{\pi_t^N})\right)}^{pH-1} \sum_{i=0}^{\blue{N}-1} \left \vert  t_{i+1}^{\blue{N}} -t_i^{\blue{N}} \right \vert \left(\frac{\left \vert  t_{i+1}^{\blue{N}} -t_i^{\blue{N}} \right \vert}{\blue{\mu(\blue{\pi_t^N})}}\right)^{pH-1} \nonumber \\
& \leq C_H \blue{\cdot \left(\mu(\blue{\pi_t^N})\right)}^{pH-1} \sum_{i=0}^{\blue{N}-1} \left \vert  t_{i+1}^{\blue{N}} -t_i^{\blue{N}} \right \vert \nonumber \\
&\leq C_H  \blue{\cdot t \cdot \left(\mu(\blue{\pi_t^N})\right)}^{pH-1} \to 0 \quad \text{ for } \blue{N} \to \infty. \nonumber
\end{align}
where we use Proposition \ref{prop:PathsHoelder} for $	p=2H$ in \eqref{eq:L1Convergence1}. Moreover, the limit follows as $pH-1>0$.
\end{proof}
\begin{remark}
\begin{enumerate}
	\item Note that Lemma \ref{lemma:ConvergenceLemmaSquared} is in line with the well-known result that for a ``classical'' fractional Brownian motion $\bar{B}^H$ it holds 
	$$\lim_{n\to \infty} \sum_{i=0}^{\blue{N}-1} \|  \bar{B}_{t_{i+1}^{\blue{N}}}^H-\bar{B}_{t_i^{\blue{N}}}^H \|^{p}=0$$ 
	for $p>\frac{1}{H}$, where the limit is understood in a $L^1$-sense, see e.g. \cite{Guerra_Nualart_2005} and \cite{rogers_arbitrage_1997}. However, we cannot deduce any result for $p<\frac{1}{H}$ and $p=\frac{1}{H}$ as this would be based on ergodic theory, which is yet not available in the $G$-expectation framework and is beyond the scope of this paper.
	\item As in Section 3.2 in \cite{peng_zhang_2017}, we define the quadratic variation of a one-dimensional fractional $G$-Brownian motion $B^H$ as the $L^1$-limit in \eqref{eq:ConvegenceL1Capacity} for $p=2$. Then it follows by Lemma \ref{lemma:ConvergenceLemmaSquared} that for $H \in (\frac{1}{2},1)$ the quadratic variation process of \blue{a} fractional $G$-Brownian motion equals $0$. 
	\end{enumerate} 
\end{remark}

The next result shows that \blue{a} fractional $G$-Brownian motion has quasi-surely non-differentiable sample paths.
\begin{prop}
	{Let $B^H=(B^H_t)_{t \geq 0}$ be a $d$-dimensional fractional $G$-Brownian motion of Hurst \blue{index} $H \in \left( 0,1\right)$.} Then,
	\begin{equation*}
		c\left( \limsup_{t \to t_0} \bigg\lVert \frac{B_t^H-B_{t_0}^H}{t-t_0}\bigg\rVert<\infty\right)=0.
	\end{equation*}
\end{prop}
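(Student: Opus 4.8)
The statement claims that quasi-surely the paths of a $d$-dimensional fractional $G$-Brownian motion are nowhere differentiable, in the strong sense that $\limsup_{t\to t_0}\lVert (B_t^H-B_{t_0}^H)/(t-t_0)\rVert = \infty$ for (quasi-)every $t_0$; here the capacity of the complementary event is zero. The plan is to reduce the claim to a single component and to mimic the classical Paley--Wiener--Zygmund argument, replacing Borel--Cantelli under a fixed measure by the sub-additivity of the capacity $c$ in \eqref{eq:capacity} and the moment estimates we already have.

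First I would fix $t_0 \in [0,\infty)$ (and by the self-similarity and stationarity of increments, Propositions \ref{prop:SelfSimilar_and_StationarityIncrements} and \ref{prop:statincr}, reduce to, say, a compact interval around $t_0$). For the event in question to fail we would need some finite $L$ with $\lVert B_t^H - B_{t_0}^H\rVert \le L\,|t-t_0|$ for all $t$ near $t_0$; covering a dyadic subinterval by $n$ equal pieces and using the triangle inequality, this forces, for each fixed $n$ large and some index $k$, that $\lVert B^H_{(k+j)/n}(i) - B^H_{(k+j-1)/n}(i)\rVert \le C L/n$ simultaneously for a fixed number $m$ (say $m=3$) of consecutive increments. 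Call this event $A_{n,k}^{L}$. Then the bad set is contained in $\bigcup_{L\in\mathbb N}\bigcap_{n}\bigcup_{k} A_{n,k}^L$, and it suffices to show $c\big(\bigcap_n \bigcup_k A_{n,k}^L\big)=0$ for each $L$.

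To bound $c(\bigcup_k A_{n,k}^L)$ I would use sub-additivity of $c$ together with an estimate on $c(A_{n,k}^L)$. By Lemma \ref{lemma:DistributionFractionalGBM} each increment $B^H_{(k+j)/n}(i)-B^H_{(k+j-1)/n}(i)$ is $\mathcal N\big(0,[\underline\sigma_i^2 n^{-2H},\overline\sigma_i^2 n^{-2H}]\big)$-distributed, so $\hat{\mathbb E}$ controls the probability that a single increment is small by a Chebyshev-type inequality applied to a suitable smooth bump function; because under every $P\in\mathcal P$ the rescaled increment has variance bounded below by $\underline\sigma_i^2>0$, one gets $c\big(\lVert\text{increment}\rVert\le CL/n\big)\le \kappa(L/n)\,n^{H}\cdot n^{-H}$-type bounds — concretely a bound of order $(L n^{-H})$ from a density argument, i.e. $\le C' L n^{-H}$. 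Using the (distributional) near-independence of non-overlapping increments, the product over $m$ consecutive increments gives $c(A_{n,k}^L)\le (C'Ln^{-H})^{m}$ up to constants; summing over the $\sim n$ choices of $k$ yields $c(\bigcup_k A_{n,k}^L)\le C''L^m n^{1-mH}$. Choosing $m$ so that $mH>1$ (possible since $H>0$: take $m=\lceil 1/H\rceil+1$, which is $\ge 3$) makes this tend to $0$ as $n\to\infty$, hence $c\big(\bigcap_n\bigcup_k A_{n,k}^L\big)=0$, and sub-additivity over $L\in\mathbb N$ finishes the proof. Finally one takes a countable dense set of $t_0$'s and, using path continuity (Proposition \ref{prop:PathsHoelder}) and monotonicity of the bad set in a neighbourhood, upgrades to all $t_0$, quasi-surely.

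The main obstacle is making the small-ball estimate $c\big(\lVert B^H_{(k+j)/n}(i)-B^H_{(k+j-1)/n}(i)\rVert\le CL/n\big)\le C'Ln^{-H}$ rigorous in the $G$-setting and then multiplying these across $m$ consecutive increments: one cannot invoke ordinary independence, only Definition \ref{defi:independenceRandomVariables}, so the product bound must be obtained by iterating the tower property $\hat{\mathbb E}[\cdot]=\hat{\mathbb E}[\hat{\mathbb E}[\cdot\mid\Omega_{\cdot}]]$ and at each step estimating $\hat{\mathbb E}$ of the indicator of a small-value event for a single $G$-normal increment, which requires either Markov's inequality against $\hat{\mathbb E}[\exp(-c|X|^2)]$-type quantities or an explicit use of the representation $\hat{\mathbb E}[\cdot]=\sup_{P\in\mathcal P}E^P[\cdot]$ combined with the uniform lower bound $\underline\sigma_i^2>0$ on the conditional variance under every $P\in\mathcal P$. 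Care is also needed because $c$ is only sub-additive (not additive), so the Borel--Cantelli-type step must be phrased purely through the $\bigcap_n\bigcup_k$ structure and the decay $n^{1-mH}\to 0$, rather than through summability of a series.
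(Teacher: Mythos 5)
Your plan targets a Paley--Wiener--Zygmund style argument, and its central quantitative step is the bound $c(A_{n,k}^L)\le (C'Ln^{-H})^m$ obtained by ``multiplying'' small-ball estimates over $m$ consecutive increments of $B^H$. This is where the proposal has a genuine gap. First, consecutive increments of a fractional Brownian motion are not independent --- neither classically nor in Peng's sense --- because each increment $B^H_t(i)-B^H_s(i)=\int_0^s\big(K_H(t,u)-K_H(s,u)\big)dB_u(i)+\int_s^t K_H(t,u)\,dB_u(i)$ carries a past-measurable part through the Volterra kernel; so the increments you condition on are not ``single $G$-normal increments'' independent of the past, and the tower-property iteration you sketch does not reduce to the unconditional small-ball bound. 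To make it work you would need (i) an anti-concentration estimate for $\int_s^t K_H(t,u)\,dB_u(i)$ around an arbitrary ($\Omega_s$-measurable) centre, valid uniformly over all $P\in\mathcal{P}$, and (ii) a uniform lower bound on $\int_s^t K_H(t,u)^2\,du$ of order $(t-s)^{2H}$ (a local nondeterminism property); neither is established in the paper or in your proposal. Second, the single-increment bound ``of order $Ln^{-H}$ from a density argument'' is not available as stated: under the sublinear expectation there is no density, and a small-ball estimate for the capacity must come from a bound that holds uniformly over $\mathcal{P}$ (this is exactly what Lemma 3.6 of Wang and Zheng provides for $\|B_1\|$, which you do not invoke, and whose exponent $2\alpha=\underline{\sigma}^2$ would enter your exponents).

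The paper's own proof is much less ambitious and avoids all of this: it treats a fixed $t_0$ (effectively $t_0=0$ after stationarity), introduces for a fixed constant the increasing events $A^H(t)=\{\sup_{0\le s\le t}\|B^H_s/s\|\le d\}$ along $t_n\downarrow 0$, uses continuity of the capacity along this monotone sequence, and then bounds
\begin{equation*}
c\big(A^H(t_n)\big)\;\le\; c\left(\Big\|\tfrac{B^H_{t_n}}{t_n}\Big\|\le d\right)\;=\;c\big(\|B_1\|\le d\,t_n^{1-H}\big)\;\le\; e^{1/2}\big(d\,t_n^{1-H}\big)^{\underline{\sigma}^2}\;\longrightarrow\;0,
\end{equation*}
using only self-similarity ($B^H_{t}\sim t^H B^H_1$, and $B^H_1\sim B_1$) and the small-deviation estimate for $\|B_1\|$ under the capacity from Wang--Zheng. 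No product over increments, no independence, and no uniform-in-$t_0$ covering argument is needed. If you want to keep your stronger ``simultaneously for all $t_0$'' formulation, you must first supply the conditional small-ball/local nondeterminism inputs described above; as written, the multiplicative step fails and the proof does not go through.
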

\begin{proof}
	We define the sets
	\begin{equation*}
		A^H(t):=\bigg \lbrace{ \sup_{0 \leq s \leq t } \bigg \Vert \frac{B^H_s(\omega)}{s} \bigg \Vert \leq d \bigg\rbrace}.
	\end{equation*}
	For any $(t_n)_{n \geq 0}$ with $t_n \downarrow 0$ \blue{for $n \to \infty$} it holds $A^{H}(t_{n}) \subset A^{H}(t_{n+1})$. Thus
	\begin{equation*}
		c\left(\lim_{n \to \infty} A^H(t_n)\right)=\lim_{n \to \infty} c\left(A^H(t_n)\right)
	\end{equation*}
	and we now prove that $\lim_{n \to \infty} c\left(A^H(t_n)\right)=0.$
	We have 
	\begin{align}
		\lim_{n \to \infty} c\left(A^H(t_n)\right)&=\lim_{n \to \infty} c \left( \sup_{0 \leq s \leq t_n} \bigg \Vert \frac{B^H_s(\omega)}{s} \bigg \Vert \leq d  \right) \nonumber \\
		&\leq \lim_{n \to \infty} c \left( \bigg \Vert \frac{B^H_{t_n}(\omega)}{t_n} \bigg \Vert \leq d  \right) \nonumber \\
		&=\lim_{n \to \infty} c \left( \Vert B_1 t_n^{H-1}\Vert \leq d \right) \label{eq:NowhereDifferentiable1} \\
		&=\lim_{n \to \infty} c \left( \Vert B_1 \Vert \leq d t_n^{1-H} \right) \nonumber \\
		&\leq \lim_{n \to \infty } \exp \left( \frac{1}{2} \right) (t_n^{1-H} d)^{2\alpha} \label{eq:NowhereDifferentiable2} \\
		&=0 \nonumber 
	\end{align}
	with $\alpha:=\frac{\underline{\sigma}^2}{2}$. 
	In \eqref{eq:NowhereDifferentiable1} we use Proposition \ref{prop:SelfSimilar_and_StationarityIncrements} and in \eqref{eq:NowhereDifferentiable2} Lemma 3.6 in \cite{Wang_Zheng_2018}.
\end{proof}
\section{{Stochastic calculus} for  \blue{a multi-dimensional} $G$-fBm \blue{with} $H \in (\frac{1}{2},1)$} \label{sec:StochasticCalculus}
{In this section we introduce the definition of a stochastic integral with respect to a $d$-dimensional fractional $G$-Brownian motion \greenNew{of Hurst index $H \in (\frac{1}{2},1)$} by following the approach of \cite{zaehle}.}
\subsection{Fractional calculus and deterministic differential equations}
In order to define the generalized Lebesgue-Stieltjes integral as in \cite{zaehle} we introduce some notation about fractional calculus.
For a finite interval $[a,b] \subset \mathbb{R}$ and $1 \leq p \leq \infty$ we set
\begin{equation*}
L^p(a,b):= \lbrace{ f: [a,b] \to \mathbb{R}^n: \ f  \ \text{{Borel} mesurable, } \| f \|_{L^p([a,b])}<\infty \rbrace},
\end{equation*}
where 
\begin{align*}
	\| f \|_{L^p(a,b)}&:= \left( \int_a^b \vert f(x) \vert^p dx \right)^{\frac{1}{p}}, \quad 1 \leq p < \infty ,\\
	\| f \|_{L^{\infty}(a,b)}&:=\textnormal{ess sup} \lbrace \vert f (x) \vert: x \in [a,b]\rbrace.
\end{align*}
{For any $0 < \lambda \leq 1$ we define $C^{\lambda}(0,T; \mathbb{R}^d)$ as the space of H\"older continuous functions $f:[0,T] \to \mathbb{R}^d$ with exponent $\lambda$.}
\begin{defi}
Let $f \in L^1(a,b)$ and $\alpha >0$. The \emph{left-sided fractional Riemann-Louiville integral of $f$ of order $\alpha$} is given for almost all $x \in (a,b)$ by
\begin{equation*}
	I_{a+}^{\alpha}f(x):=\frac{1}{\Gamma(\alpha)} \int_a^x \frac{f(y)}{(x-y)^{1-\alpha}}dy,
\end{equation*}
whereas the \emph{{right}-sided fractional Riemann-Louiville integral of $f$ of order $\alpha$} is given for almost all $x \in (a,b)$ by
\begin{equation*}
	I_{b+}^{\alpha}f(x):=\frac{(-1)^{-\alpha}}{\Gamma(\alpha)} \int_x^b \frac{f(y)}{(y-x)^{1-\alpha}}dy,
\end{equation*}
where $(-1)^{-\alpha}=e^{-i \pi \alpha}$ and $\Gamma(\alpha)=\int_0^{\infty}r^{\alpha-1}e^{-r}dr. $
\end{defi}
The class $I_{+}^{\alpha}(L^p(a,b))$ contains all functions $f$ that can be represented as $f=I_{a+}^{\alpha}\varphi$ for $\varphi \in L^p(a,b)$ and $I_{-}^{\alpha}(L^p(a,b))$ contains all functions $f$ that can be represented as $f=I_{a-}^{\alpha}\varphi$ for $\varphi \in L^p(a,b)$, respectively.  Moreover, for $f \in I_{a+}^{\alpha}(L^p(a,b))$  (respectively $f \in I_{a-}^{\alpha}(L^p(a,b))$) and $0< \alpha < 1$ we introduce the \emph{Weyl derivatives} by
\begin{align*}
	D_{a+}^{\alpha}f(x)&:= \frac{1}{\Gamma(1-\alpha)} \left(\frac{f(x)}{(x-a)^{\alpha}}+\alpha \int_a^x \frac{f(x)-f(y)}{(x-y)^{\alpha +1}}dy \right) \textbf{1}_{(a,b)}(x), \\
	D_{b-}^{\alpha}f(x)&:= \frac{(-1)^{\alpha}}{\Gamma(1-\alpha)} \left(\frac{f(x)}{(b-\blue{x})^{\alpha}}+\alpha\int_x^b \frac{f(x)-f(y)}{(y-x)^{\alpha +1}}dy \right)\textbf{1}_{(a,b)}(x), 
\end{align*}
respectively, where the convergence of the integral holds pointwise for almost all $x \in (a,b)$ for $p=1$ and in $L^p(a,b)$ for $p>1$.\\
For a function $f:[a,b] \to \mathbb{R}$ we use the notation 
\begin{align} \label{eq:Limit1}
	f(u+):= \lim_{\delta \blue{\downarrow} 0}f(u + \delta) \quad \text{ and } \quad f(u-):=\lim_{\delta \blue{\downarrow} 0} f(u-\delta) \text{ for } a \leq u \leq b,
\end{align}
and
\begin{align} \label{eq:Limit2}
	f_{a+}(x):=(f(x)-f(a+))\textbf{1}_{(a,b)}(x) \quad \text{ and } \quad f_{b-}(x):=(\blue{f(x)-f(b-)}) \textbf{1}_{(a,b)}(x),
\end{align}
where we assume that the limits in \eqref{eq:Limit1} and \eqref{eq:Limit2} exist and are finite.
\begin{defi} \label{def:GeneralizedLebegueStieltjes}
Let $f,g: [a,b] \to \mathbb{R}$ such that $f(a+),g(a+),g(b-)$ exist, and $f_{a+} \in I_{a+}^{\alpha} (L^p([a,b]))$, $g_{b-} \in I_{b+}^{1-\alpha} (L^	q([a,b]))$ for some $p,q \geq 1$ with $\frac{1}{p}+\frac{1}{q}\leq 1$ and $0 \leq \alpha \leq 1$. Then the \emph{generalized fractional Lebesgue-Stieltjes integral} is defined by
\begin{equation}\label{eq:DefLebStieltjes}
	\int_a^b f(x) dg(x):=(-1)^{\alpha} \int_a^b D^{\alpha}_{a+} f_{a+}(x)D_{b-}^{1-\alpha}g_{b-}(x)dx + f(a+)(g(b-)-g(a+)).
\end{equation}
\end{defi}
\begin{remark} \label{remark:GeneralizedLebesgueStieltjes}
For $f \in C^{\lambda}([a,b]), g \in C^{\mu}([a,b])$ with $\mu+ \lambda >1${,} the conditions in Definition \ref{def:GeneralizedLebegueStieltjes} are satisfied and we can choose $p=q=\infty$ and $\alpha <\lambda, 1-\alpha<\mu$. The integral of $\int_a^b f(x)dg(x)$ in  \eqref{eq:DefLebStieltjes} coincides then with the Riemann-Stieltjes integral of $f$ with respect to $g$. 
\end{remark}
For $0<\alpha<1 / 2$ we denote by $W_T^{1-\alpha, \infty}(0, T)$ the space of measurable functions $g:[0, T] \rightarrow \mathbb{R}$ such that
$$
\|g\|_{1-\alpha, \infty, T}:=\sup _{0<s<t<T}\left(\frac{|g(t)-g(s)|}{(t-s)^{1-\alpha}}+\int_s^t \frac{|g(y)-g(s)|}{(y-s)^{2-\alpha}} d y\right)<\infty .
$$
In particular, it holds
\begin{align} \label{eq:InclusionsSpaces}
C^{1-\alpha+\varepsilon}(0, T) \subset W_T^{1-\alpha, \infty}(0, T) \subset C^{1-\alpha}(0, T), \quad \forall \varepsilon>0 .
\end{align}
For $0 < \alpha<\frac{1}{2}$, $W_0^{\alpha, 1}(0, T)$ {is defined as the space of} measurable functions $f$ on $[0, T]$ such that
$$
\|f\|_{\alpha, 1}:=\int_0^T \frac{|f(s)|}{s^\alpha} d s+\int_0^T \int_0^s \frac{|f(s)-f(y)|}{(s-y)^{\alpha+1}} d y d s<\infty .
$$
Note that the restriction of $ g \in W_T^{1-\alpha, \infty}(0, T)$ to $(0, t)$ belongs to $I_{t-}^{1-\alpha}\left(L^{\infty}(0, t)\right)$ for all \blue{$t \in [0,T]$} and the restriction of $f \in W_0^{\alpha, 1}(0, T)$ to $(0, t)$ belongs to $I_{0+}^\alpha\left(L^1(0, t)\right)$ for all  \blue{$t \in [0,T]$}.
Furthermore, for any  $f \in W_0^{\alpha, 1}(0, T)$ and $g\in W_T^{1-\alpha, \infty}(0, T)$,  the integral $\int_0^t f d g$ exists for all $t \in[0, T]$ and $$
\int_0^t f d g=\int_0^T f \mathbf{1}_{(0, t)} d g.
$$
{For $0<\alpha < \frac{1}{2}$ we denote by $W_0^{\alpha,\infty}(0,T;\mathbb{R}^d)$ the space of all measurable functions $f:[0,T] \to \mathbb{R}^d$ such that
\begin{align*}
	\| f \|_{\alpha, \infty}:= \sup_{t \in [0,T]} \left( \vert f (t) \vert + \int_0^t \frac{\vert f(t)-f(s) \vert}{(t-s)^{\alpha+1}} ds \right)< \infty.
\end{align*}}
\begin{asum} \label{assump:SDE}
Fix $m \in \mathbb{N}$. For $i=1,...,d$ and $j=1,...,m$ let \linebreak $\sigma^{i,j},b^i:{([0,T] \times \mathbb{R}^d, \mathcal{B}([0,T] \times \mathbb{R}^d)) \to (\mathbb{R},\mathcal{B}(\mathbb{R}))}$ be measurable functions. {Given} $\sigma:[0,T] \times \mathbb{R}^d \to \mathbb{R}^{d \times m}$ and $b:[0,T] \times \mathbb{R}^d \to \mathbb{R}^d$ by $\sigma(t,x):=\left( \sigma^{i,j}(t,x)\right)_{i=1,...,d; j=1,...,m}$ and $b(t,x):=(b^i(t,x))_{i=1,...,d}${, we assume that}
\begin{enumerate}
	\item $\sigma(t, x)$ is differentiable in $x$,  there exist some constants $0<\beta, \delta \leq 1,$ and for every $N \geq 0$ there exists $M_N>0$ such that the following properties hold for each $i=1, \ldots, d$
\begin{enumerate}
\item  $\sigma$ is Lipschitz continuous \blue{in space}, i.e. for all  $ x \in \mathbb{R}^d, t \in[0, T]$ it holds 
$$\|\sigma(t, x)-\sigma(t, y)\| \leq M_0\|x-y\|\blue{;}$$
\item $\sigma$ is local H\"older continuous \blue{in space}, i.e. for all $\|x\|,\|y\| \leq N,  t \in[0, T]$, it holds
$$\|\partial_{x_i} \sigma(t, x)-\partial_{y_i} \sigma(t, y)\| \leq M_N\|x-y\|^\delta\blue{;}$$
\item $\sigma$ is H\"older continuous in time, i.e. for all $ x \in \mathbb{R}^d, t, s \in[0, T]$ it holds
$$\|\sigma(t, x)-\sigma(s, x)\|+\|\partial_{x_i} \sigma(t, x)-\partial_{x_i} \sigma(s, x)\| \leq M_0|t-s|^\beta.$$
\end{enumerate}
\item 
\blue{T}here exists $b_0 \in L^\rho\left(0, T ; \mathbb{R}^d\right)$, where $\rho \geq 2$, and for every $N \geq 0$ there exists $L_N>0$ such that the following properties hold
\begin{enumerate}
	\item $b$ is local Lipschitz continuous \blue{in space}, i.e. for all $ \|x\|,\|y\| \leq N, t \in [0,T]$ it holds $$\|b(t, x)-b(t, y)\| \leq L_N\|x-y\|\blue{;}$$
	\item $b$ is bounded, i.e. for all $x \in \mathbb{R}^d, t \in [0,T]$ it holds $$\|b(t, x)\| \leq L_0\|x\|+b_0(t).$$ 
\end{enumerate}
\end{enumerate}
Here we use the notation 
$$
\| y \|:= \left(\sum_{i=1}^d \vert y^i \vert^2\right)^{\frac{1}{2}} \quad  \text{ and } \quad \| A \|:=\left(\sum_{i=1}^d \sum_{j=1}^m \vert a^{i,j} \vert^2 \right)^{\frac{1}{2}}
$$
for $y \in \mathbb{R}^d$ and $A=(a^{i,j})_{i=1,...,d; j=1,...,m} \in \mathbb{R}^{d \times m}.$
\end{asum}
We now state Theorem 5.1 in \cite{nualart2002}.
\begin{theorem} \label{theorem:ExistenceSDE}
	Let $0<\alpha<\frac{1}{2}$ be fixed. Let $g \in W_T^{1-\alpha, \infty}\left(0, T ; \mathbb{R}^m\right)$. Consider the deterministic differential equation on $\mathbb{R}^d$
\begin{align} \label{eq:DifferentialEquation}
x_t^i=x_0^i+\int_0^t b^i\left(s, x_s\right) d s+\sum_{j=1}^m \int_0^t \sigma^{i, j}\left(s, x_s\right) d g_s^j, \quad t \in[0, T],
\end{align}
$i=1, \ldots, d$, where $x_0 \in \mathbb{R}^d$, and the coefficients $\sigma^{i, j}, b^i:[0, T] \times \mathbb{R}^d \rightarrow \mathbb{R}$ are measurable functions satisfying Assumption \ref{assump:SDE}, respectively with $\rho=\frac{1}{\alpha}, 0<$ $\beta, \delta \leq 1$ and
$$
0<\alpha<\alpha_0=\min \left\{\frac{1}{2}, \beta, \frac{\delta}{1+\delta}\right\}.
$$
Then, equation \eqref{eq:DifferentialEquation} has a unique solution $x \in W_0^{\alpha,\infty}(0,T;\mathbb{R}^d)$, which is $(1-\alpha)$-H\"older continuous. 
\end{theorem}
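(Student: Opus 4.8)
The strategy is the fractional-calculus approach of \cite{zaehle} and \cite{nualart2002}: recast \eqref{eq:DifferentialEquation} as a fixed-point problem for the solution operator $\mathcal{T}$ acting on the Banach space $W_0^{\alpha,\infty}(0,T;\mathbb{R}^d)$, defined componentwise by
\[
\mathcal{T}(x)_t^i := x_0^i + \int_0^t b^i(s,x_s)\,ds + \sum_{j=1}^m \int_0^t \sigma^{i,j}(s,x_s)\,dg_s^j, \qquad t\in[0,T],\ i=1,\dots,d,
\]
where each $\int_0^t \sigma^{i,j}(s,x_s)\,dg_s^j$ is a generalized fractional Lebesgue--Stieltjes integral in the sense of Definition \ref{def:GeneralizedLebegueStieltjes}. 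This integral is well defined since $g^j\in W_T^{1-\alpha,\infty}(0,T)$ restricts to $I_{t-}^{1-\alpha}(L^\infty)$ and, using the Lipschitz-in-space and H\"older-in-time bounds of Assumption \ref{assump:SDE} together with the regularity of $x$ inherited from $W_0^{\alpha,\infty}$, the integrand $s\mapsto\sigma^{i,j}(s,x_s)$ lies in $W_0^{\alpha,1}(0,T)$.

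First I would establish the basic analytic estimates for the integral. From the representation \eqref{eq:DefLebStieltjes} and the pointwise bounds on the Weyl derivatives one obtains, for $0\le s<t\le T$,
\[
\left|\int_s^t f_u\,dg_u\right| \le C\,\|g\|_{1-\alpha,\infty,T}\int_s^t\left(\frac{|f_r|}{(r-s)^\alpha}+\int_s^r\frac{|f_r-f_y|}{(r-y)^{\alpha+1}}\,dy\right)dr,
\]
which, fed with the hypotheses on $\sigma$ and the linear growth of $b$, yields estimates for $\|\mathcal{T}(x)\|_{\alpha,\infty}$ and, on balls where the coefficients are globally Lipschitz, for $\|\mathcal{T}(x)-\mathcal{T}(\tilde x)\|_{\alpha,\infty}$. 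The condition $\alpha<\alpha_0=\min\{1/2,\beta,\delta/(1+\delta)\}$ enters exactly here: it makes the singular kernels $(r-s)^{-\alpha}$, $(r-y)^{-\alpha-1}$ produce finite exponents, and the bound $\alpha<\delta/(1+\delta)$ is what controls the increments of $\sigma(s,x_s)-\sigma(s,\tilde x_s)$ through the locally $\delta$-H\"older derivative $\partial_x\sigma$, which is what uniqueness ultimately relies on.

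Because $b$ is only locally Lipschitz and $\partial_x\sigma$ only locally H\"older, there is no global contraction, so the next step is a truncation argument: for $N\in\mathbb{N}$ replace $b,\sigma$ by coefficients $b_N,\sigma_N$ coinciding with them on $\{\|x\|\le N\}$ and globally Lipschitz/bounded, so that the truncated operator $\mathcal{T}_N$ becomes a contraction on $W_0^{\alpha,\infty}$ after passing to an equivalent weighted norm $\|x\|^{(\lambda)}:=\sup_{t\in[0,T]}e^{-\lambda t}(|x_t|+\int_0^t|x_t-x_s|(t-s)^{-\alpha-1}ds)$ with $\lambda$ large; Banach's fixed-point theorem then gives a unique solution $x^N$. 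The crux — and the step I expect to be the main obstacle — is an a priori bound: any solution satisfies $\|x\|_{\alpha,\infty}\le K$ with $K$ depending only on $x_0$, $T$, $\|g\|_{1-\alpha,\infty,T}$ and the structural constants, not on $N$. To get this one inserts the integral estimate above into a \emph{Gronwall-type lemma adapted to fractional (singular) kernels}, since the classical Gronwall inequality does not handle the extra $\int_s^r(r-y)^{-\alpha-1}dy$ terms. Choosing $N>K$, the solution $x^N$ of the truncated equation never reaches level $N$ and hence solves \eqref{eq:DifferentialEquation}; uniqueness follows from the difference estimate together with the same Gronwall-type lemma, using that any two solutions are a priori bounded.

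Finally, once a bounded solution $x\in W_0^{\alpha,\infty}(0,T;\mathbb{R}^d)$ is available, its $(1-\alpha)$-H\"older continuity is read off from the equation: $t\mapsto\int_0^t b(s,x_s)\,ds$ is Lipschitz because $s\mapsto b(s,x_s)$ is bounded (linear growth of $b$, boundedness of $x$), while each $t\mapsto\int_0^t\sigma^{i,j}(s,x_s)\,dg_s^j$ is $(1-\alpha)$-H\"older because the generalized Lebesgue--Stieltjes integral of a bounded integrand against $g^j\in W_T^{1-\alpha,\infty}(0,T)\subset C^{1-\alpha}(0,T)$, by \eqref{eq:InclusionsSpaces}, inherits the modulus $(t-s)^{1-\alpha}$; summing over $i,j$ gives the asserted regularity of $x$.
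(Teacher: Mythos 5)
The paper does not prove this statement at all: it is quoted verbatim as Theorem 5.1 of \cite{nualart2002} (introduced with ``We now state Theorem 5.1 in \cite{nualart2002}''), so there is no internal proof to compare against; your outline is essentially the strategy of that reference (fractional-calculus estimates for the generalized Lebesgue--Stieltjes integral, a fixed point in $W_0^{\alpha,\infty}$ under an equivalent weighted norm, truncation plus an a priori bound via a Gronwall-type lemma with singular kernels, uniqueness from the $\delta$-H\"older derivative of $\sigma$), and as a plan it is sound. Two minor inaccuracies, neither affecting the conclusion: the drift term $t\mapsto\int_0^t b(s,x_s)\,ds$ is in general only $(1-\alpha)$-H\"older rather than Lipschitz, since $\|b(s,x_s)\|\le L_0\|x_s\|+b_0(s)$ with $b_0$ merely in $L^{1/\alpha}(0,T)$, so one must invoke H\"older's inequality to get the modulus $(t-s)^{1-\alpha}$; and since Assumption \ref{assump:SDE} already makes $\sigma$ globally Lipschitz in space, truncation is only needed for the locally Lipschitz drift and the locally $\delta$-H\"older derivative $\partial_x\sigma$, which is exactly how \cite{nualart2002} proceeds. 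Also, the $(1-\alpha)$-H\"older continuity of $t\mapsto\int_0^t\sigma(s,x_s)\,dg_s$ requires the integrand to lie in a $W_0^{\alpha,\infty}$-type space (Proposition 4.1 of \cite{nualart2002}), not merely to be bounded, but this regularity does follow from $x\in W_0^{\alpha,\infty}$ and the assumptions on $\sigma$.
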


\subsection{Stochastic integrals with respect to \blue{a multi-dimensional} {$G$-fBm} \blue{with} $H \in (\frac{1}{2},1)$}
From now on we fix $H \in (\frac{1}{2},1)$ \blue{and $T>0$}. {{We denote by $L^0\left(\Omega, \mathcal{F}, \mathcal{P}; W_0^{\alpha,\infty}(0,T)\right)$ the space of functions $X$ \blue{on $(\Omega, \mathcal{F})$} such that $X \in W_0^{\alpha,\infty}(0,T)$ quasi surely. Analogously, we define the space $L^0\left(\Omega, \mathcal{F}, \mathcal{P}; W_T^{\alpha, 1}(0,T)\right)$.} Here, {$\mathcal{F}=\mathcal{B}(\Omega)$} and $\mathcal{P}$ \blue{are} given in \eqref{eq:GExpectationUpperExpectation}. From now on\blue{,} we will call {the}  elements of the spaces above \emph{stochastic processes} with a small abuse of terminology.  
{With a similar notation, $L^{\infty}\left(\Omega, \mathcal{F}, \mathcal{P} ; \mathbb{R}^d\right)$ is the space of functions {$X:(\Omega, \mathcal{F}) \to \mathbb{R}^d$ such that $\| X(\omega)\|_{L^\infty}<\infty $ for quasi every $\omega$. }
} 

 First note that by Proposition \ref{prop:PathsHoelder} \blue{a $d$-dimensional fractional $G$-Brownian motion} $B^H$ is quasi-surely H\"older continuous for any $\beta < H$. Thus, by \eqref{eq:InclusionsSpaces} it follows that quasi-surely for any $T>0$ the trajectories of $B^H$ belong to $W_T^{1-\alpha, \infty}(0,T)$ for $1-H <\alpha <\frac{1}{2}$. \blue{From now on, we fix \greenNew{$\alpha \in (1-H,\frac{1}{2})$}.}
\begin{defi} \label{def:IntegralFractionalGBrownianMotion}
{Let $B^H=(B^H_t)_{\blue{t \in [0,T]}}$ be a $d$-dimensional fractional $G$-Brownian motion of Hurst \blue{index} $H \in \left(\frac{1}{2},1\right)$.} Let {$u=(u_t)_{t \in [0,T]} \in L^0\left(\Omega, \mathcal{F}, \mathcal{P}; W_T^{\alpha, 1}(0,T)\right)$}. Then we define the pathwise integral
\begin{equation} \label{eq:DefinitionIntegralfGBMPathwise}
\int_0^t u_s d B^H_s, \quad t \in [0,T]
\end{equation}
in the sense of Definition \ref{def:GeneralizedLebegueStieltjes}.
\end{defi}
We now provide an inequality for the pathwise increments of a fractional $G$-Brownian motion, which is a generalization of Lemma 7.4 in \cite{nualart2002}. 
\begin{lemma}\label{lemma:firstnualart}
	Let $B^H=(B_t^H)_{t \in [0,T]}$ be a {$d$-dimensional} fractional $G$-Brownian motion \blue{of Hurst index} $H \in (\frac{1}{2},1)$. Then for every $0<\varepsilon<H$ and $T>0$ there exists a positive {one-dimensional} random variable $\eta_{\varepsilon,H, T}$ with
	$$
	\hat{\mathbb{E}}\left[\left|\eta_{\varepsilon, H, T}\right|^p\right]<\infty
	$$
	for all $p \in[1, \infty)$ and such that for all $s, t \in[0, T]$ it holds
$$
\|B_t^H-B_s^H\| \leq \eta_{\varepsilon, H, T}|t-s|^{H-\varepsilon} \quad \text { q.s. }
$$
\end{lemma}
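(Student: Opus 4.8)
The plan is to deduce the stated Hölder estimate from the moment bound of Proposition \ref{prop:PathsHoelder} via a Kolmogorov-type continuity argument adapted to the $G$-setting. First I would fix $0<\varepsilon<H$ and $T>0$ and choose a real number $q$ large enough that $q\geq 2H$ and, simultaneously, $\varepsilon > H/p$ where $p:=q/H$; concretely any $q$ with $q>H/\varepsilon$ and $q\geq 2H$ works, so the exponent $p=q/H$ can be taken arbitrarily large. Proposition \ref{prop:PathsHoelder} then gives, for all $0\le s<t\le T$,
\begin{equation*}
\hat{\mathbb{E}}\left[\|B_t^H-B_s^H\|^{p}\right] = \hat{\mathbb{E}}\left[\|B_t^H-B_s^H\|^{q/H}\right] \le C_{q,H}\,(t-s)^{q} = C_{q,H}\,(t-s)^{1+(pH-1)},
\end{equation*}
which is exactly the hypothesis of a Kolmogorov continuity theorem with parameters yielding Hölder exponent $(pH-1)/p = H - 1/p < H-\varepsilon$ (after possibly enlarging $q$). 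Since $\hat{\mathbb{E}}$ is the upper expectation $\sup_{P\in\mathcal{P}}E^{P}[\cdot]$ of the weakly compact family $\mathcal{P}$, the bound holds uniformly in $P\in\mathcal{P}$, so one may invoke the quasi-sure version of Kolmogorov's criterion (e.g. the argument in \cite{peng_nonlinearExpectation_book} used to construct continuous modifications, or the version used in the proof of Proposition \ref{prop:PathsHoelder}) to obtain a modification — here already identified with $B^H$ itself, which is continuous by Proposition \ref{prop:PathsHoelder} — together with a Hölder-continuity modulus.

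The key quantitative output of that argument is a random variable of the form
\begin{equation*}
\eta_{\varepsilon,H,T} := \sup_{\substack{s,t\in[0,T]\\ s\neq t}} \frac{\|B_t^H-B_s^H\|}{|t-s|^{H-\varepsilon}},
\end{equation*}
which is finite quasi-surely (this is precisely the conclusion of the Hölder part of the Kolmogorov criterion), and for which one has an $L^p$-type tail bound coming from the dyadic chaining estimate: the standard Garsia–Rodemich–Rumsey / dyadic argument expresses $\eta_{\varepsilon,H,T}$ (up to a deterministic constant) as a convergent sum over dyadic scales of the quantities $\max_{k}\|B^H_{(k+1)2^{-n}T}-B^H_{k2^{-n}T}\|$, and applying $\hat{\mathbb{E}}$ to the $r$-th power of this sum, using subadditivity together with the moment bound above for a sufficiently large exponent, gives $\hat{\mathbb{E}}[|\eta_{\varepsilon,H,T}|^{r}]<\infty$. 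Since we are free to choose $q$ (hence $p$) as large as we like at the start, the resulting integrability holds for every $r\in[1,\infty)$, as claimed. I would carry this out by: (i) fixing the dyadic partitions $t^n_k = k2^{-n}T$; (ii) bounding $\hat{\mathbb{E}}[\|B^H_{t^n_{k+1}}-B^H_{t^n_k}\|^{p}]\le C_{q,H}(2^{-n}T)^{q}$; (iii) summing a geometric-type series in $n$ after a Borel–Cantelli / Chebyshev step under the capacity $c$, exactly as in the proof of Lemma \ref{lemma:ConvergenceLemmaSquared} where $L^1$-convergence was upgraded to convergence in capacity; and (iv) reading off both the pathwise Hölder bound $\|B_t^H-B_s^H\|\le \eta_{\varepsilon,H,T}|t-s|^{H-\varepsilon}$ q.s. and the moment bound for $\eta_{\varepsilon,H,T}$.

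The main obstacle is bookkeeping the Kolmogorov / Garsia–Rodemich–Rumsey argument in the sublinear framework: one must make sure that every step — the dyadic decomposition of increments, the union bound over the finitely many $k$ at scale $n$, and the passage from the $L^p(\hat{\mathbb{E}})$ bound to an almost-everywhere (here quasi-sure) statement — is valid using only subadditivity, positive homogeneity and the representation $\hat{\mathbb{E}}=\sup_{P\in\mathcal{P}}E^P$, rather than properties special to a single probability measure. This is essentially routine because the family $\mathcal{P}$ is fixed and the estimates are uniform in $P\in\mathcal{P}$, but it is where care is needed; alternatively one can run the entire classical Kolmogorov–Čentsov argument under each $P\in\mathcal{P}$ and then take the supremum, exploiting that the exceptional null sets can be chosen within the common polar sets since the moment bounds are $P$-independent. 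Everything else — the choice of $q$, the self-similarity and stationarity inputs already packaged inside Proposition \ref{prop:PathsHoelder} — is immediate.
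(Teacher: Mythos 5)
Your argument is correct in substance, but it follows a genuinely different route from the paper. The paper does not run a Kolmogorov--\v{C}entsov chaining argument at all: it invokes the deterministic Garsia--Rodemich--Rumsey-type estimate of Lemma 7.3 in \cite{nualart2002}, which gives the pathwise bound $\|B^H_t-B^H_s\|\le C_{H,\varepsilon}|t-s|^{H-\varepsilon}\xi$ quasi-surely with $\xi$ the double integral in \eqref{eq:xi}; the only stochastic step is then a single moment estimate $\hat{\mathbb{E}}[|\xi|^q]<\infty$ for $q\ge 2/\varepsilon$, obtained by writing $\hat{\mathbb{E}}=\sup_{P\in\mathcal{P}}E^P$ as in \eqref{eq:GExpectationUpperExpectation}, pulling the expectation inside the double time integral with Minkowski's integral inequality (uniformly in $P$), and feeding in the increment bound of Proposition \ref{prop:PathsHoelder}; setting $\eta_{\varepsilon,H,T}=C_{H,\varepsilon}\xi$ finishes the proof, with lower-order moments handled by monotonicity. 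Your dyadic chaining construction of $\eta_{\varepsilon,H,T}$ as the H\"older seminorm, with the union bound over dyadic points, the Minkowski inequality for sums under $\hat{\mathbb{E}}$, and the same input from Proposition \ref{prop:PathsHoelder}, also works and is self-contained, but it forces you to redo the quasi-sure bookkeeping (countable subadditivity, upward monotone convergence for the upper expectation, extension from dyadic to general times via path continuity) that the paper avoids entirely because its GRR inequality is purely deterministic. Two small points of care in your write-up: the condition you need for H\"older exponent $H-\varepsilon$ is $1/p<\varepsilon$ (not $\varepsilon>H/p$), and the displayed inequality ``$H-1/p<H-\varepsilon$'' has the direction reversed --- you want $H-1/p>H-\varepsilon$, which your ``take $q$ arbitrarily large'' indeed ensures; and note that the moment bound for a fixed $\varepsilon$ holds for the series only when the moment order exceeds $1/\varepsilon$, the remaining orders following by monotonicity, exactly as in the paper's treatment of $p<2/\varepsilon$.
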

\begin{proof}
By Lemma 7.3 in \cite{nualart2002} {and since $B^H$ has quasi-surely continuous paths,} it holds quasi-surely for $s, t \in[0, T]$
\begin{align} \label{eq:DifferencePathwise}
\|B_t^H-B_s^H\| \leq C_{H, \varepsilon}|t-s|^{H-\varepsilon} \xi
\end{align}
with
\begin{equation}
\xi=\left(\int_0^T \int_0^T \frac{\|B_r^H-B_{v}^H\|^{\frac{2}{\varepsilon}}}{|r-v|^{\frac{2H}{ \varepsilon}}} d r d v\right)^{\frac{\varepsilon}{2}} .\label{eq:xi}
\end{equation}
\allowdisplaybreaks
Fix $q \geq 2 / \varepsilon$. Then we get
\begin{align} 
	\hat{\mathbb{E}} \left[ \vert \xi \vert^q\right] &= \hat{\mathbb{E}} \left[ \left(\int_0^T \int_0^T \frac{\|B_r^H-B_{v}^H\|^{\frac{2}{\varepsilon}}}{|r-v|^{\frac{2H}{\varepsilon}}} d r d v\right)^{\frac{q \varepsilon}{2}} \right] \nonumber \\
	&=\sup_{P \in \mathcal{P}} {E}^P \left[ \left(\int_0^T \int_0^T \frac{\|B_r^H-B_{v}^H\|^{\frac{2}{\varepsilon}}}{|r-v|^{\frac{2 H}{\varepsilon}}} d r d v\right)^{\frac{q \varepsilon}{2}} \right] \label{eq:MinkowskiInequality1} \\
	&\leq\left( \sup_{P \in \mathcal{P}} \int_0^T \int_0^T  \left(\int_{\Omega} \left( \frac{\|B_r^H(\omega)-B_{v}^H(\omega)\|^{\frac{2}{\varepsilon}}}{|r-v|^{\frac{2H}{\varepsilon}}} \right)^{\frac{q \varepsilon}{2}} dP(\omega) \right)^{\frac{2}{q \varepsilon}}  d r d v \right)^{\frac{q\epsilon}{2}}\label{eq:MinkowskiInequality2} \\
	&\leq\left( \int_0^T \int_0^T  \frac{1}{|r-v|^{\frac{2H}{\varepsilon}}}\sup_{P \in \mathcal{P}}\left(\int_{\Omega} \left( \|B_r^H(\omega)-B_{v}^H(\omega)\|^{\frac{2}{\varepsilon}} \right)^{\frac{q \varepsilon}{2}} dP(\omega) \right)^{\frac{2}{q \varepsilon}}  d r d v \right)^{\frac{q\epsilon}{2}}\label{eq:MinkowskiInequality3} \\
	&= {\left( \int_0^T \int_0^T  \frac{1}{|r-v|^{\frac{2H}{\varepsilon}}}\left(\hat{\mathbb{E}}\left[ \|B_r^H(\omega)-B_{v}^H(\omega)\|^{q} \right]\right)^{\frac{2}{q \varepsilon}}  d r d v \right)^{\frac{q\epsilon}{2}}} \nonumber \\
	& \leq {\left( \int_0^T \int_0^T  \frac{1}{|r-v|^{\frac{2H}{\varepsilon}}}\left(C_{\varepsilon,H}\vert r- v \vert^{qH}\right)^{\frac{2}{q \varepsilon}}  d r d v \right)^{\frac{q\epsilon}{2}}} \label{eq:MinkowskiInequality4} \\
	& \leq {C_{\varepsilon,H} \left( \int_0^T \int_0^T    d r d v \right)^{\frac{q\epsilon}{2}}}\nonumber \\
	& \leq C_{\varepsilon,H} T^{q \varepsilon},\label{eq:EsitmateXi}
	\end{align}
where we use in \eqref{eq:MinkowskiInequality1} the representation of the $G$-expectation in \eqref{eq:GExpectationUpperExpectation}. Inequality \eqref{eq:MinkowskiInequality2} follows by the Minkowski inequality. {The inequality in} \eqref{eq:MinkowskiInequality3} {follows} since 
$
\sup_{P \in \mathcal{P}} E^P\left[\left( \|B_r^H(\omega)-B_{v}^H(\omega)\|^{\frac{2}{\varepsilon}} \right)^{\frac{q \varepsilon}{2}}\right ]
$ is measurable. Moreover, Proposition \ref{prop:PathsHoelder} yields \eqref{eq:MinkowskiInequality4}. By setting 
\begin{equation*}
\eta_{\epsilon,T,H}=C_{\varepsilon, H} \xi
\end{equation*}
 the result follows by \eqref{eq:DifferencePathwise} and \blue{\eqref{eq:EsitmateXi}}.
\end{proof}

\begin{lemma}
	Let $B^H=(B_t^H)_{\blue{t \in [0,T]}}$ be a fractional $G$-Brownian motion {of Hurst \blue{index} $H \in \left( \frac{1}{2},1\right)$} and $u=(u_t)_{t \in [0,T]} \in L^0\left(\Omega, \mathcal{F}, \mathcal{P}; W_T^{\alpha, 1}(0,T)\right)$. Then, the pathwise integral defined in \eqref{eq:DefinitionIntegralfGBMPathwise} satisfies 
\begin{equation} \label{eq:BoundIntegral}
\left|\int_0^Tu_sdB^H_s \right| \le \tilde{G} \lVert u \rVert_{\alpha,1} \quad \text{{quasi-surely}},
\end{equation}
where $\tilde{G}$ is defined by
\begin{equation}\label{eq:nualartG}
\tilde{G}:=\frac{1}{\Gamma(1-\alpha)}\sup_{0< s< t< T} \left|\left( D_{t-}^{1-\alpha}B^{H}_{t-}\right)(s)\right|,
\end{equation}
and has {finite} moments of all orders for $p \in [1,\infty)$.
If $u \in  L^0\left(\Omega, \mathcal{F}, \mathcal{P}; W_0^{\alpha, \infty}(0,T)\right)$, then the integral $(\int_0^t u_sdB_s^H)_{0 \le t \le T}$ is H\"older continuous of order $1-\alpha$ {quasi-surely.}
\end{lemma}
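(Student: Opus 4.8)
The plan is to view both assertions as pathwise statements, reducing them to the classical estimates for the generalized Lebesgue--Stieltjes integral of \cite{nualart2002} applied for quasi every fixed $\omega$, the only genuinely probabilistic ingredient being a moment bound for the random constant $\tilde G$. For \eqref{eq:BoundIntegral} I would argue as follows: as already observed right before the statement, for $1-H<\alpha<\frac{1}{2}$ the trajectories of $B^H$ belong quasi-surely to $W_T^{1-\alpha,\infty}(0,T)$, while by hypothesis $u(\omega)$ belongs quasi-surely to the space $W_0^{\alpha,1}(0,T)$ on which $\lVert\cdot\rVert_{\alpha,1}$ is finite. Fixing such an $\omega$, the hypotheses of Proposition~4.1 in \cite{nualart2002} are met with $f=u(\omega)$ and $g=B^H(\omega)$, so that the integral in Definition~\ref{def:GeneralizedLebegueStieltjes} --- i.e. the one of Definition~\ref{def:IntegralFractionalGBrownianMotion} --- exists and satisfies $\big|\int_0^T u_s(\omega)\,dB^H_s(\omega)\big|\le \Lambda_\alpha(B^H(\omega))\,\lVert u(\omega)\rVert_{\alpha,1}$, where $\Lambda_\alpha(g)=\Gamma(1-\alpha)^{-1}\sup_{0<s<t<T}\big|(D^{1-\alpha}_{t-}g_{t-})(s)\big|$ is exactly $\tilde G(\omega)$ from \eqref{eq:nualartG}. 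This gives \eqref{eq:BoundIntegral}.

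Next I would prove that $\tilde G$ has all moments. I would fix $\varepsilon\in(0,H+\alpha-1)$, which is possible since $\alpha>1-H$, so that $H-\varepsilon>\max\{1-\alpha,\alpha\}$. Writing out the Weyl derivative of $B^H_{t-}(x)=(B^H_x-B^H_t)\mathbf{1}_{(0,t)}(x)$ (using continuity of $B^H$), one obtains for $0<s<t<T$
\[
\big|(D^{1-\alpha}_{t-}B^H_{t-})(s)\big|\le\frac{1}{\Gamma(\alpha)}\left(\frac{\lVert B^H_t-B^H_s\rVert}{(t-s)^{1-\alpha}}+(1-\alpha)\int_s^t\frac{\lVert B^H_y-B^H_s\rVert}{(y-s)^{2-\alpha}}\,dy\right),
\]
and inserting the quasi-sure H\"older bound $\lVert B^H_v-B^H_w\rVert\le\eta_{\varepsilon,H,T}\,|v-w|^{H-\varepsilon}$ of Lemma~\ref{lemma:firstnualart} --- the exponent $H+\alpha-1-\varepsilon$ being positive makes both terms finite, and $t-s\le T$ --- yields $\sup_{0<s<t<T}\big|(D^{1-\alpha}_{t-}B^H_{t-})(s)\big|\le C_{\alpha,H,\varepsilon,T}\,\eta_{\varepsilon,H,T}$ quasi-surely, the left-hand side being measurable thanks to the continuity of the paths. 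Hence $\tilde G\le C'_{\alpha,H,\varepsilon,T}\,\eta_{\varepsilon,H,T}$ quasi-surely, and since $\hat{\mathbb{E}}$ is monotone with respect to quasi-sure inequalities, Lemma~\ref{lemma:firstnualart} gives $\hat{\mathbb{E}}[|\tilde G|^p]\le (C'_{\alpha,H,\varepsilon,T})^p\,\hat{\mathbb{E}}[|\eta_{\varepsilon,H,T}|^p]<\infty$ for every $p\in[1,\infty)$.

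For the last assertion I would again argue pathwise. Assuming $u\in L^0(\Omega,\mathcal{F},\mathcal{P};W_0^{\alpha,\infty}(0,T))$ and fixing $\omega$ outside the relevant polar set, I would use the additivity of the generalized Lebesgue--Stieltjes integral to write $\int_0^t u\,dB^H-\int_0^s u\,dB^H=\int_s^t u\,dB^H$ for $0\le s<t\le T$, and then apply on $[s,t]$ the estimate of Proposition~4.1 in \cite{nualart2002}, which yields
\[
\left|\int_s^t u_r\,dB^H_r\right|\le \tilde G\int_s^t\left(\frac{|u_r|}{(r-s)^{\alpha}}+\alpha\int_s^r\frac{|u_r-u_y|}{(r-y)^{\alpha+1}}\,dy\right)dr\le \tilde G\,\lVert u\rVert_{\alpha,\infty}\left(\frac{(t-s)^{1-\alpha}}{1-\alpha}+\alpha(t-s)\right),
\]
using that $\Lambda_\alpha$ on $[s,t]$ is dominated by $\tilde G$. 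Since $t-s\le T^{\alpha}(t-s)^{1-\alpha}$, the right-hand side is at most $C_{\alpha,T}\,\tilde G\,\lVert u\rVert_{\alpha,\infty}(t-s)^{1-\alpha}$, and as $\tilde G$ and $\lVert u\rVert_{\alpha,\infty}$ are finite quasi-surely this shows that $t\mapsto\int_0^t u_s\,dB^H_s$ is $(1-\alpha)$-H\"older continuous quasi-surely.

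The step I expect to be the main obstacle is the moment bound for $\tilde G$: one has to transfer the quasi-sure H\"older regularity of the $G$-fBm --- whose random constant $\eta_{\varepsilon,H,T}$ has finite $\hat{\mathbb{E}}$-moments of all orders by virtue of the representation $\hat{\mathbb{E}}=\sup_{P\in\mathcal{P}}E^P$ and Proposition~\ref{prop:PathsHoelder} --- into an $L^p$-bound on the random constant $\tilde G$ controlling the Weyl derivative of the trajectories, and one has to handle the measurability of the supremum over the triangle $\{0<s<t<T\}$. Once this is in place, Steps~1 and~3 become routine, purely deterministic applications of the Lebesgue--Stieltjes inequalities of \cite{nualart2002}.
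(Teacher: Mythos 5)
Your proposal is correct and follows essentially the same route as the paper: reduce both assertions pathwise to the deterministic estimate (4.10) and Proposition 4.1 of Nualart--Rascanu, and obtain the $L^p$-bound on $\tilde G$ by bounding the Weyl derivative through the quasi-sure H\"older estimate of Lemma \ref{lemma:firstnualart} and the monotonicity of $\hat{\mathbb{E}}$. The explicit computation you give for the moment bound is precisely what the paper compresses into ``following the same steps of the proof of Lemma 7.5 in \cite{nualart2002}''.
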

{\begin{proof}
Following the same steps of the proof of Lemma 7.5 in \cite{nualart2002}, it can be seen that Lemma {\ref{lemma:firstnualart}} implies that, if $1-H<\alpha<\frac{1}{2}$, then
\begin{equation}\label{eq:lemma75nualart}
\hat{\mathbb{E}}\left[\sup_{0 < s < t < T}\left|D_{t-}^{1-\alpha}B_{t-}^{H}(s)\right|^p\right]<\infty
\end{equation}
for all $T>0$ and $p \in [1,\infty)$. In particular, \eqref{eq:lemma75nualart} implies that the random variable $\tilde{G}$ defined in \eqref{eq:nualartG} has moments of all orders for $p \in [1,\infty)$. Thus, {the inequality in \eqref{eq:BoundIntegral} follows by} equation (4.10) in \cite{nualart2002}. The H\"older continuity of the integral $(\int_0^t u_sdB_s^H)_{0 \le t \le T}$ for $u \in  L^0\left(\Omega, \mathcal{F}, \mathcal{P}; W_0^{\alpha, \infty}(0,T)\right)$ {is a consequence of} by Proposition 4.1 in \cite{nualart2002}.  
\end{proof}}
\begin{theorem}
Let $B^H=(B^H_t)_{t \in [0,T]}$ be a $d$-dimensional fractional $G$-Brownian motion of Hurst \blue{index} $H \in \left(\frac{1}{2},1\right)$ defined on {$(\Omega,\mathcal{F}, \mathcal{P}).$}
We consider 
\begin{equation} \label{eq:SDE}
X_t^i=X_0^i+\sum_{j=1}^m \int_0^t \sigma^{i, j}\left(s, X_s\right) d B_s^H(j)+\int_0^t b^i\left(s, X_s\right) d s, \quad t \in[0, T],
\end{equation}
$i=1, \ldots, d$, where {$X_0:(\Omega, \mathcal{F}) \to (\mathbb{R}^d, \mathcal{B}(\mathbb{R}^d))$ is a measurable function.} {Here we assume that 
$$
\sigma^{i, j}, b^i: \Omega \times[0, T] \times \mathbb{R}^d \rightarrow \mathbb{R}, \quad i=1,...,d, \quad j=1,...,m
$$
are such that for quasi all $\omega \in \Omega$, the functions $\sigma^{\omega}: [0,T] \times \mathbb{R}^d \to \mathbb{R}^{d \times m}$ and $b^{\omega}: [0,T] \times \mathbb{R}^d \to \mathbb{R}^{d}$ defined by $\sigma^{\omega}(t,x):=\left( \sigma^{i,j}(\omega,t,x)\right)_{i=1,...,d; j=1,...,m}$ and $b^{\omega}(t,x):=(b^i(\omega,t,x))_{i=1,...,d}$ {are measurable and} satisfy points 1. and 2. of Assumption \ref{assump:SDE}} for $\beta >1 - H, \delta >\frac{1}{H}-1$, where the constants $M_N, L_N, K_0$ and the function $b_0$ may depend on $\omega$.
If $\alpha \in\left(1-H, \alpha_0\right)$ and $\rho \geq 1 / \alpha$, there exists a unique {solution} $X \in {L^0\left(\Omega, \mathcal{F}, \mathcal{P} ; W_0^{\alpha, \infty}\left(0, T ; \mathbb{R}^d\right)\right)}$ of the stochastic differential equation \eqref{eq:SDE}. Moreover, for quasi all $\omega \in \Omega$
$$
X(\omega, \cdot)=\left(X(\omega, \cdot)(i)\right)_{i=1,...,d} \in C^{1-\alpha}\left(0, T ; \mathbb{R}^d\right).
$$
\end{theorem}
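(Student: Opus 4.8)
The plan is to reduce the statement to a purely pathwise application of Theorem \ref{theorem:ExistenceSDE}, handling the exceptional set and the measurability separately. First I would fix $\alpha \in (1-H,\alpha_0)$ — this interval is nonempty since $H>\frac12$ forces $\tfrac12>1-H$, while $\beta>1-H$ and $\delta>\tfrac1H-1 \Longleftrightarrow \tfrac{\delta}{1+\delta}>1-H$, so $\alpha_0=\min\{\tfrac12,\beta,\tfrac{\delta}{1+\delta}\}>1-H$. By Proposition \ref{prop:PathsHoelder} the $d$-dimensional $G$-fBm $B^H$ admits a modification whose trajectories are, quasi-surely, $\beta$-H\"older continuous for every $\beta<H$; denote by $\Omega_0\subseteq\Omega$ the complement of the polar set on which this fails. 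For $\omega\in\Omega_0$ we may pick $\beta\in(1-\alpha,H)$, so by the chain of inclusions \eqref{eq:InclusionsSpaces} the path $B^H(\omega,\cdot)$ belongs to $W_T^{1-\alpha,\infty}(0,T;\mathbb{R}^m)$.

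Next, for each fixed $\omega\in\Omega_0$ the coefficients $\sigma^{\omega}(t,x)=(\sigma^{i,j}(\omega,t,x))_{i,j}$ and $b^{\omega}(t,x)=(b^i(\omega,t,x))_i$ satisfy Assumption \ref{assump:SDE} with the prescribed $\beta>1-H$, $\delta>\tfrac1H-1$ and $\rho\ge 1/\alpha$ (the constants $M_N,L_N,K_0$ and $b_0$ may depend on $\omega$, but this is irrelevant for a statement that is pathwise in $\omega$). Since $\alpha<\alpha_0$, Theorem \ref{theorem:ExistenceSDE} applies with $g=B^H(\omega,\cdot)$: reading \eqref{eq:SDE} for this fixed $\omega$ as the deterministic equation \eqref{eq:DifferentialEquation}, there is a unique solution $X(\omega,\cdot)\in W_0^{\alpha,\infty}(0,T;\mathbb{R}^d)$, which moreover lies in $C^{1-\alpha}(0,T;\mathbb{R}^d)$. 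On the polar set $\Omega\setminus\Omega_0$ we simply put $X\equiv 0$; this does not affect membership in $L^0(\Omega,\mathcal{F},\mathcal{P};W_0^{\alpha,\infty})$, which only requires the relevant properties to hold quasi-surely. Uniqueness in $L^0(\Omega,\mathcal{F},\mathcal{P};W_0^{\alpha,\infty})$ follows at once: two such solutions coincide for every $\omega\in\Omega_0$ by the pathwise uniqueness in Theorem \ref{theorem:ExistenceSDE}, hence quasi-surely.

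The one point requiring genuine work is that $\omega\mapsto X(\omega,\cdot)$ is $\mathcal{F}$-measurable as a map into $W_0^{\alpha,\infty}(0,T;\mathbb{R}^d)$, so that $X$ is a stochastic process in the sense of Section \ref{sec:StochasticCalculus}. I would obtain this from the Picard scheme underlying Theorem \ref{theorem:ExistenceSDE}: set $X^0_\cdot(\omega):=X_0(\omega)$ and, componentwise,
$$
X^{n+1}_t(\omega)(i):=X_0(\omega)(i)+\sum_{j=1}^m\int_0^t\sigma^{i,j}\!\left(\omega,s,X^n_s(\omega)\right)dB^H_s(j)(\omega)+\int_0^t b^i\!\left(\omega,s,X^n_s(\omega)\right)ds .
$$
By induction each $X^n$ is jointly measurable: the generalized Lebesgue–Stieltjes integral of Definition \ref{def:GeneralizedLebegueStieltjes} is, for fixed integrand and integrator, a Lebesgue integral of Weyl derivatives, and the map $(u,g)\mapsto\int_0^\cdot u_s\,dg_s$ is continuous from $W_T^{\alpha,1}(0,T)\times W_T^{1-\alpha,\infty}(0,T)$ into $C([0,T];\mathbb{R})$ — this is exactly the content of the bound \eqref{eq:BoundIntegral} together with its analogue in the driver — hence Borel measurable; composing with the measurable maps $\omega\mapsto B^H(\omega,\cdot)$, $\omega\mapsto X_0(\omega)$ and $(\omega,x)\mapsto\sigma^{i,j}(\omega,s,x)$ preserves measurability. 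The iterates converge — first on a sufficiently small interval by the Nualart–R\u{a}\c{s}canu contraction estimate, then globally by concatenation — to $X(\omega,\cdot)$ for every $\omega\in\Omega_0$, so $X$, being a quasi-sure limit of measurable maps, is measurable. The main obstacle is thus this joint-measurability bookkeeping (and in particular pinning down the continuity of the pathwise integral operator in the $W^{\alpha,1}$/$W^{1-\alpha,\infty}$ norms); the rest is a direct transcription of Theorem \ref{theorem:ExistenceSDE} to quasi-every fixed path, with the observation that a polar set is negligible for the spaces $L^0(\Omega,\mathcal{F},\mathcal{P};\cdot)$.
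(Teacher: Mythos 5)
Your proposal is correct and coincides with the paper's (implicit) argument: the paper gives no separate proof, since the theorem is intended as an immediate pathwise application of Theorem \ref{theorem:ExistenceSDE} to quasi-every trajectory of $B^H$, which lies in $W_T^{1-\alpha,\infty}(0,T)$ by Proposition \ref{prop:PathsHoelder} and \eqref{eq:InclusionsSpaces}, exactly as you argue. Your additional Picard-iteration measurability bookkeeping goes beyond what the paper requires, because $L^0\left(\Omega,\mathcal{F},\mathcal{P};W_0^{\alpha,\infty}\right)$ is defined there only through quasi-sure pathwise membership (and note that the Nualart--R\u{a}\c{s}canu existence proof is not a plain local contraction plus concatenation, so if you do include that step you should justify measurable dependence differently, e.g.\ via measurable selection or continuity of the solution map in the driver).
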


\subsection{Pathwise It\^{o}'s formula for \blue{a one-dimensional} $G$-fBm \blue{with} $H \in (\frac{1}{2},1)$}

Next, we present an It\^{o} formula for \blue{a} fractional $G$-Brownian motion  for $H > \frac{1}{2}$ involving the integral introduced in \eqref{eq:DefinitionIntegralfGBMPathwise}. This is {an extension} of the main result of \cite{shiryaev1998arbitrage} to the $G$-setting {under an additional \blue{assumption} on the function $f$}.

\begin{prop} \label{prop:Ito}
	Let $B^H=(B_t^H)_{\blue{t \in [0,T]}}$ be a one-dimensional fractional $G$-Brownian motion \blue{of} Hurst index $H \in \left( \frac{1}{2},1 \right)$.
	\begin{enumerate}
		\item {Let $f \in C^2(\mathbb{R})$ with bounded second derivative and fix} \blue{$t \in [0,T]$}. Then it holds
		\begin{equation} \label{eq:Ito1}
		f(B_t^H)=f(0)+ \int_0^t \frac{\partial }{\partial x}f(B_u^H)dB_u^H,
	\end{equation}
	\blue{where the integral with respect to $B^H$ is given according to Definition \ref{def:IntegralFractionalGBrownianMotion}.}
		\item {Let} $f \in C^{1,2}(\blue{[0,T]}\times \mathbb{R})$ {such that  $\frac{\partial^2}{\partial^2 x}f$ is bounded} and $\int_0^t\left| \frac{\partial f}{\partial t}(s,B^H_s)\right|ds<\infty$ quasi-surely for any $t \blue{\in [0,T]}$ .  
		Then for any $\blue{t \in [0,T]}$ it holds
		\begin{equation} \label{eq:Ito2}
		f(t,B_t^H)=f(0,0)+ \int_0^t \frac{\partial}{\partial t}f(u,B_u^H)du + \int_0^t \frac{\partial}{\partial x} f(u,B_u^H)dB_u^H.
	\end{equation}
	{Here} $\frac{\partial}{\partial t}f$ and $\frac{\partial}{\partial x}f$  denote the partial derivatives of $f$ with respect to $t$ and $x$, respectively, {and $\frac{\partial^2}{\partial^2 x}f$ the second partial derivative of $f$ with respect to $x$.}
	\end{enumerate}
\end{prop}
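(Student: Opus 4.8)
The plan is to reduce everything to the classical pathwise Föllmer/Shiryaev-type Itô formula by exploiting the fact, established in Lemma~\ref{lemma:ConvergenceLemmaSquared} and the remark following it, that a one-dimensional fractional $G$-Brownian motion with $H\in(\tfrac12,1)$ has vanishing quadratic variation quasi-surely, together with the quasi-sure $(H-\varepsilon)$-Hölder continuity of its trajectories from Proposition~\ref{prop:PathsHoelder} and Lemma~\ref{lemma:firstnualart}. The key point is that quasi-surely the path $t\mapsto B^H_t(\omega)$ is a continuous function of finite $p$-variation for some $p<2$ (any $p>1/H$ works and $1/H<2$), so all the machinery is purely pathwise and the sublinear expectation only enters through the word ``quasi-surely''. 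First I would fix a polar set $\mathcal N$ outside of which: (i) $t\mapsto B^H_t(\omega)$ is $(1-\alpha)$-Hölder continuous for our fixed $\alpha\in(1-H,\tfrac12)$, hence lies in $W_T^{1-\alpha,\infty}(0,T)$ by \eqref{eq:InclusionsSpaces}; and (ii) $\sum_i\lvert B^H_{t_{i+1}^N}-B^H_{t_i^N}\rvert^2\to 0$ along any sequence of partitions with mesh tending to zero. All statements below are made for $\omega\notin\mathcal N$.

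For part~1, fix $f\in C^2(\mathbb R)$ with bounded $f''$ and fix $t\in[0,T]$. Since $B^H(\omega)\in C^{1-\alpha}$ and $f'(B^H(\omega))\in C^{1-\alpha}$ with $(1-\alpha)+(1-\alpha)>1$, the pathwise integral $\int_0^t f'(B^H_u)\,dB^H_u$ from Definition~\ref{def:IntegralFractionalGBrownianMotion} coincides, by Remark~\ref{remark:GeneralizedLebesgueStieltjes}, with the Riemann--Stieltjes integral, i.e.\ with $\lim_{N}\sum_i f'(B^H_{t_i^N})\bigl(B^H_{t_{i+1}^N}-B^H_{t_i^N}\bigr)$ along partitions of $[0,t]$ with vanishing mesh. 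On the other hand a second-order Taylor expansion gives
\begin{equation*}
f(B^H_t)-f(0)=\sum_{i}f'(B^H_{t_i^N})\bigl(B^H_{t_{i+1}^N}-B^H_{t_i^N}\bigr)+\tfrac12\sum_i f''(\xi_i^N)\bigl(B^H_{t_{i+1}^N}-B^H_{t_i^N}\bigr)^2,
\end{equation*}
with $\xi_i^N$ between the two values; the remainder is bounded by $\tfrac12\lVert f''\rVert_\infty\sum_i\bigl(B^H_{t_{i+1}^N}-B^H_{t_i^N}\bigr)^2$, which tends to $0$ by property~(ii) of $\mathcal N$. Letting $N\to\infty$ yields \eqref{eq:Ito1}. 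This argument is valid for quasi every $\omega$, hence \eqref{eq:Ito1} holds quasi-surely.

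For part~2 I would run the same discretisation in two variables. Writing $\Delta_i t=t_{i+1}^N-t_i^N$ and $\Delta_i B=B^H_{t_{i+1}^N}-B^H_{t_i^N}$, a joint Taylor expansion of $f\in C^{1,2}([0,T]\times\mathbb R)$ gives
\begin{equation*}
f(t,B^H_t)-f(0,0)=\sum_i \partial_t f(t_i^N,B^H_{t_i^N})\,\Delta_i t+\sum_i \partial_x f(t_i^N,B^H_{t_i^N})\,\Delta_i B+R_N,
\end{equation*}
where $R_N$ collects the $O((\Delta_i t)^2)$ terms (negligible since $\sum_i\Delta_i t=t$ and the mesh tends to $0$, using continuity of $\partial_t f$ along the compact path), the mixed $\partial_{tx}f$ terms (bounded by $\max_i\lvert\Delta_i B\rvert\cdot t\cdot\sup\lvert\partial_{tx}f\rvert\to0$ by uniform continuity of the path), and the $\tfrac12\partial_{xx}f\,(\Delta_i B)^2$ terms (controlled by $\tfrac12\lVert\partial_{xx}f\rVert_\infty\sum_i(\Delta_i B)^2\to0$ again by property~(ii)). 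The first Riemann sum converges to $\int_0^t\partial_t f(u,B^H_u)\,du$ since $u\mapsto\partial_t f(u,B^H_u)$ is continuous, hence Riemann integrable, and the assumption $\int_0^t\lvert\partial_t f(s,B^H_s)\rvert\,ds<\infty$ merely guarantees the limiting integral is well defined; the second Riemann sum converges to the pathwise integral $\int_0^t\partial_x f(u,B^H_u)\,dB^H_u$, because $u\mapsto\partial_x f(u,B^H_u)$ is $(1-\alpha)$-Hölder (composition of a $C^{1}$ function of $(u,x)$ with the Hölder path and the Lipschitz identity in $u$) and $(1-\alpha)+(1-\alpha)>1$, so Remark~\ref{remark:GeneralizedLebesgueStieltjes} applies and identifies this with the generalized Lebesgue--Stieltjes integral of Definition~\ref{def:IntegralFractionalGBrownianMotion}. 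Passing to the limit gives \eqref{eq:Ito2} for quasi every $\omega$.

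The main obstacle, and the part that deserves the most care, is making rigorous the claim that the pathwise (fractional Weyl-derivative) integral of Definition~\ref{def:IntegralFractionalGBrownianMotion} equals the Riemann--Stieltjes limit: this is exactly Remark~\ref{remark:GeneralizedLebesgueStieltjes}, but one has to verify that the integrands $f'(B^H_\cdot)$, resp.\ $\partial_x f(\cdot,B^H_\cdot)$, genuinely lie in the Hölder class $C^{1-\alpha}$ for the same $\alpha\in(1-H,\tfrac12)$ for which the $G$-fBm paths lie in $C^{1-\alpha}$ — so that $\lambda+\mu>1$ in that remark — and to check the measurability/quasi-sure statements so that ``quasi-surely'' is preserved throughout. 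A secondary subtlety is that in part~2 the hypothesis only requires $\partial_{xx}f$ bounded and $\partial_t f(\cdot,B^H_\cdot)$ integrable, not that $\partial_t f$ or $\partial_{tx}f$ be globally bounded; this is handled because the trajectory $B^H([0,t])(\omega)$ is a compact subset of $\mathbb R$ for each fixed $\omega$, so one may work with the (finite) suprema of $\partial_t f$ and $\partial_{tx}f$ over the compact set $[0,t]\times B^H([0,t])(\omega)$ when bounding $R_N$.
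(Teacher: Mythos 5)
Your plan for Point 1 is essentially the paper's own proof: Taylor expansion along a partition, remainder bounded by $\tfrac12\sup|f''|\sum_i\bigl(B^H_{t_{i+1}^N}-B^H_{t_i^N}\bigr)^2$, which vanishes because the squared-increment sums tend to zero, and identification of the Riemann sums with the integral of Definition \ref{def:IntegralFractionalGBrownianMotion} through Remark \ref{remark:GeneralizedLebesgueStieltjes}. Two small remarks on how you get there: the quasi-sure statement (ii) about the squared-increment sums does not follow from Lemma \ref{lemma:ConvergenceLemmaSquared} as cited (that lemma gives $L^1$/capacity convergence only, and indeed the paper argues with convergence in capacity), but it does follow immediately from your item (i), since $(1-\alpha)$-H\"older continuity with $\alpha<\tfrac12$ gives $\sum_i(\Delta_iB^H)^2\le \eta^2\, t\,\mu(\pi^N_t)^{1-2\alpha}\to0$ pathwise, so your purely pathwise route is fine; and where the paper verifies admissibility of the integrand by checking directly, via the mean value theorem and the boundedness of $f''$, that $f'(B^H_\cdot)\in W_T^{\alpha,1}(0,T)$ quasi-surely, you instead check $(1-\alpha)$-H\"older continuity of $f'(B^H_\cdot)$ and invoke Remark \ref{remark:GeneralizedLebesgueStieltjes}; both are valid and yours also yields the Riemann--Stieltjes identification in the same stroke.

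For Point 2 the paper only states that the proof is analogous, and your sketch is in the same spirit, but it uses more regularity than the hypotheses provide. First, your expansion invokes the mixed derivative $\partial_{tx}f$, which is not part of $f\in C^{1,2}([0,T]\times\mathbb{R})$; this is easily repaired by splitting the increment as $f(t_{i+1},B^H_{t_{i+1}})-f(t_i,B^H_{t_{i+1}})$ plus $f(t_i,B^H_{t_{i+1}})-f(t_i,B^H_{t_i})$, so no mixed term appears. Second, and more substantively, your claim that $u\mapsto\partial_x f(u,B^H_u)$ is $(1-\alpha)$-H\"older rests on Lipschitz (or H\"older) dependence of $\partial_x f$ on the time variable, which $C^{1,2}$ alone does not guarantee: the space increment is controlled by the bounded $\partial_{xx}f$, but the time increment $\partial_x f(u,y)-\partial_x f(v,y)$ is only known to be continuous, which is not enough either for the H\"older bound you need in Remark \ref{remark:GeneralizedLebesgueStieltjes} or for membership of $\partial_x f(\cdot,B^H_\cdot)$ in $W_T^{\alpha,1}(0,T)$ as required by Definition \ref{def:IntegralFractionalGBrownianMotion}. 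You should either impose (or observe that the intended reading of the hypotheses supplies) some time-regularity of $\partial_x f$, or adapt the paper's explicit $W_T^{\alpha,1}$-norm verification to the time-dependent case; as written, this step of your Point 2 argument has a gap, albeit one the paper's laconic ``follows similarly'' also leaves unaddressed.
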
 
\begin{proof}
\blue{We start by proving Point 1. Let $\blue{\pi_{t}^N}=\lbrace t_0^{\blue{N}},...,t_{\blue{N}}^{\blue{N}} \rbrace$, $\blue{N \in \mathbb{N}}$, be a sequence of partitions of $[0,t]$ such that the mesh size \blue{$\mu(\pi_{t}^N)$ defined in \eqref{eq:MeshSize}} converges to $0$ for $\blue{N} \to \infty$.} By applying Taylor\blue{'s theorem} we get
\begin{align}
&f(B_t^H)-f(B_0^H)\nonumber \\ &= \sum_{i=0}^{\blue{N}-1}	 f(B_{t_{i+1}^{\blue{N}}}^H)-f(B_{t_i^{\blue{N}}}^H) \nonumber \\
& = \sum_{i=0}^{\blue{N}-1} \frac{\partial}{\partial x}f(B_{t_{i+1}^{\blue{N}}}^H)\left( B_{t_{i+1}^{\blue{N}}}^H-B_{t_i^{\blue{N}}}^H\right)+ \sum_{i=0}^{\blue{N}-1}\int_{B_{t_{i+1}^{\blue{N}}}}^{B_{t_i^{\blue{N}}}^H} \frac{\partial^2}{\partial x^2}f(u)(B_{t_i^{\blue{N}}}^H-u)du \nonumber \\
& \leq \sum_{i=0}^{\blue{N}-1}  \frac{\partial}{\partial x}f(B_{t_{i+1}^{\blue{N}}}^H)\left( B_{t_{i+1}^{\blue{N}}}^H-B_{t_i^{\blue{N}}}^H\right)+ \frac{1}{2} \sup_{u \in [0,t]} \left \vert  \frac{\partial^2}{\partial x^2}f(B_u^H)\right\vert \sum_{i=0}^{\blue{N}-1}\left \vert B_{t_{i\blue{+1}}^{\blue{N}}}^H-B_{t_i^{\blue{N}}}^H \right \vert^2.  \label{eq:Taylor}
\end{align}
By Lemma \ref{lemma:ConvergenceLemmaSquared} we have that
\begin{equation}\label{eq:convergencecapacity}
\lim_{\blue{N}\to \infty} {\sum_{i=0}^{\blue{N}-1}}\left \vert B_{t_{i\blue{+1}}^{\blue{N}}}^H-B_{t_i^{\blue{N}}}^H \right \vert^2=0 \text{ in capacity,}
\end{equation} 
and since $\frac{\partial^2}{\partial x^2}f$ is bounded by assumption this implies that the second term in \eqref{eq:Taylor} also converges to $0$ in capacity. \blue{We now focus on the first integral. }
\blue{First, we prove that $\frac{\partial }{\partial x}f\left(B_{\cdot}^H\right) \in L^0\left(\Omega, \mathcal{F}, \mathcal{P}; W_T^{\alpha, 1}(0,T)\right)$ \blue{i.e.} that for quasi-all $\omega \in \Omega$ it holds
\begin{align}
&\left\|\frac{\partial }{\partial x}f\left(B_T^H(\omega)\right)\right \|_{\alpha,1}\nonumber \\
&=\int_0^T \frac{\left|\frac{\partial }{\partial x}f\left(B_s^H(\omega)\right)\right|}{s^{\alpha}} d s+\int_0^T \int_0^s \frac{\left|\frac{\partial }{\partial x}f\left(B_s^H(\omega)\right)-\frac{\partial }{\partial x}f\left(B_y^H(\omega)\right)\right|}{(s-y)^{\alpha+1}} d y d s<\infty. \label{eq:normtocheck}
\end{align}
We have
\begin{align}
\int_0^T \frac{\left|\frac{\partial }{\partial x}f\left(B_s^H(\omega)\right)\right|}{s^{\alpha}} d s\leq \sup_{s \in [0, T]}\left|\frac{\partial }{\partial x}f\left(B_s^H(\omega)\right)\right| \int_0^T \frac{1}{s^{\alpha}} d s<\infty \text{ for quasi-all $\omega \in \Omega$} \label{eq:firstfiniteintegral}
\end{align}
since \greenNew{$\alpha <1$,} $f \in C^2(\mathbb{R})$ and $B^H$ has quasi-surely continuous paths.
Moreover, \allowdisplaybreaks
\begin{align}
&\int_0^T \int_0^s \frac{\left|\frac{\partial }{\partial x}f\left(B_s^H(\omega)\right)-\frac{\partial }{\partial x}f\left(B_y^H(\omega)\right)\right|}{(s-y)^{\alpha+1}} d y d s\nonumber \\
&=\int_0^T \int_0^s \frac{\left|\frac{\partial }{\partial x}f\left(B_s^H(\omega)\right)-\frac{\partial }{\partial x}f\left(B_y^H(\omega)\right)\right|}{\left|B_s^H(\omega)-B_y^H(\omega)\right|}\frac{\left|B_s^H(\omega)-B_y^H(\omega)\right|}{(s-y)^{\alpha+1}} d y d s\nonumber \\
&\le\int_0^T \int_0^s \left|\frac{\partial^2 }{\partial x^2}f(z_{\omega})\right|\frac{\left|B_s^H(\omega)-B_y^H(\omega)\right|}{(s-y)^{\alpha+1}} d y d s \quad \text{for $z_{\omega} \in [\inf_{r \in [s,y]}B_r(\omega), \sup_{r \in [s,y]}B_r(\omega)]$}\label{eq:centrallimittheorem} \\ 
&\le C \int_0^T \int_0^s\frac{\left|B_s^H(\omega)-B_y^H(\omega)\right|}{(s-y)^{\alpha+1}} d y d s \label{eq:secondderivativeboudned} \\ 
&< \infty \text{ for quasi-all $\omega \in \Omega$}, \label{eq:integralquasisurelyfinite}
\end{align}
where \eqref{eq:centrallimittheorem} is implied by the Mean Value Theorem, \greenNew{and} \eqref{eq:secondderivativeboudned} follows from the assumption that $\frac{\partial^2 }{\partial x^2}f$ is bounded. Finally, we get \eqref{eq:integralquasisurelyfinite} since the trajectories of $B^H$ belong to $W_T^{1-\alpha, \infty}(0,T)$ and $\alpha<\frac{1}{2}$. From \eqref{eq:firstfiniteintegral} and \eqref{eq:integralquasisurelyfinite} we conclude that \eqref{eq:normtocheck} holds for quasi-all $\omega \in \Omega$, so that $\frac{\partial }{\partial x}f\left(B_{\cdot}^H\right) \in L^0\left(\Omega, \mathcal{F}, \mathcal{P}; W_T^{\alpha, 1}(0,T)\right)$.
}
\blue{\greenNew{By} Definition \ref{def:IntegralFractionalGBrownianMotion} \greenNew{we obtain that}
$$
\int_0^t \frac{\partial }{\partial x}f(B_u^H)dB_u^H=\lim_{n \to \infty} \sum_{i=0}^{\blue{N}-1}  \frac{\partial}{\partial x}f(B_{t_{i+1}^{\blue{N}}}^H)\left( B_{t_{i+1}^{\blue{N}}}^H-B_{t_i^N}^H\right),
$$
\greenNew{as pathwise limit} by Remark \ref{remark:GeneralizedLebesgueStieltjes}.
}The proof of Point 2. follows similarly.
\end{proof}

{\begin{remark}
{The boundedness assumption for the second derivative on $f$ is necessary, as the standard techniques used in the classical case as in \cite{mishura_fractional_BM} cannot be applied in the sublinear $G$-setting. For example, the $G$-expectation does not satisfy the ``monotone continuity criteria'', see \cite{hu_zhou_2018} for more details on this. Note that it is also not possible  to prove Proposition \ref{prop:Ito} as in Theorem 8.3.4 of \cite{peng_nonlinearExpectation_book}, where an It\^o formula is extended from functions with bounded derivative to $C^{2}$-functions by using stopping time arguments and passing to the limit. The problem is that stopping times are very delicate in the $G$-setting, and the available results are not applicable in our setting as the $B^H$ is not a $G$-It\^{o} process. }
\end{remark}}

\blue{
We now want to apply the definition of the pathwise integral \eqref{eq:DefinitionIntegralfGBMPathwise} and the associated It\^{o} formula \eqref{eq:Ito1} to discuss the existence of pathwise arbitrage opportunities in a financial market driven by a fractional $G$-Brownian motion of Hurst index  $H \in (\frac{1}{2}, 1)$. }

\blue{In the classical case \greenNew{with} no volatility uncertainty, this question has been studied in several works, see e.g. \cite{rogers_arbitrage_1997}, \cite {dasgupta} and \cite{shiryaev1998arbitrage}: the existence of arbitrage strategies, according to the well-known fundamental theorem of asset pricing in \cite{Delbaen_schachermayer}, results from the fact that \blue{a} fractional Brownian motion of Hurst index $H\in (\frac{1}{2}, 1)$ is not a semimartingale. Later on, the so-called Wick-It\^o integral with respect to a fractional Brownian motion for $H \in  (\frac{1}{2}, 1)$ has been introduced in \cite{Hu_Oksendal_2003} and extended for $H \in (0,1)$ in \cite{Elliot_vanDerHoek}. Based on this calculus, it can be shown that there does not exist any Wick-self-financing arbitrage in financial markets driven by a fractional Brownian motion. However, the definition of a Wick-self-financing strategy lacks a convincing economic interpretation. For alternative considerations about arbitrage in fractal models, we refer to \cite{bayraktar2005arbitrage}, \cite{bender_sottinen_valkeila}, \greenNew{\cite{cheridito_2003}}, \cite{blackledge2021areview}, \cite{deng2024statistical}, \cite{djeutcha2019solving},  \cite{Guasoni_2005}. }

\blue{In the $G$-expectation setting, the definition of arbitrage has been introduced in \cite{vorbrink}. The next example combines the definition of an arbitrage under volatility uncertainty with the one of a pathwise arbitrage and proves the existence of arbitrage strategies in fractal models under volatility uncertainty for $H \in  (\frac{1}{2}, 1)$.  This can be seen as an extension under volatility uncertainty of the example provided in Section 2 of \cite{shiryaev1998arbitrage}, where the existence of arbitrage opportunities is shown for a fractal Bachelier model.
\begin{example}
	 Consider a financial market consisting of a risk-free asset $S^0=(S^0_t)_{t \in [0,T]}$ defined by $S_t^0:=e^{rt}$ for $r \geq 0$ and of a risky asset $S^1=(S^1_t)_{t \in [0,T]}$ with $S_t^1:=S_0+ \mu t + B_t^H$, where $\mu \in \mathbb{R}$ and $B^H$ is a one-dimensional fractional $G$-Brownian motion of Hurst index $H \in (\frac{1}{2},1)$. \\
	 Define the wealth process $V^{\xi}=(V^{\xi}_t)_{t \in [0,T]}$ associated to an $\mathbb{F}$-adapted process $\xi:=(\xi^0, \xi^1)=(\xi_t^0, \xi_t^1)_{t \in [0,T]}$, intepreted as the trading strategy, by
	\begin{align*}
	V^{\xi}_t:= \xi^0_t S_t^0 + \xi_t^1 S^1_t, \quad t \in [0,T].
	\end{align*}
	In particular, $\xi$ is called \emph{pathwise admissible} if it is \emph{pathwise self-financing}, i.e. 
	\begin{align}
		V^{\xi}_t&= V^{\xi}_0+ \int_0^t  \xi^0_u d  S_u^0 +  {\int_0^t\xi_u^1 dS^1_u} \nonumber \\
		&=V_0^{\xi} + \int_0^t \xi_u^0 S_u^0 r du + \int_0^t \xi_u^1 (\mu du + dB_u^H) \label{eq:IntegralSelfFinancing},
	\end{align} 
	where the integral in \eqref{eq:IntegralSelfFinancing} is defined in the sense of Definition \ref{def:IntegralFractionalGBrownianMotion} for suitable processes $\xi$, and if $V^{\xi}$ is lower bounded.
	By Remark \ref{remark:ExplicitConstruction}, it is possible to construct a set of probability measures $\mathcal{P}$ associated to the uncertainty set $\Theta$ in  \eqref{G-equationOneDimension}. A pathwise admissible trading strategy is an \emph{arbitrage} with respect to  $\mathcal{P}$ if the associated wealth process $V^{\xi}$ satisfies
	\begin{enumerate}
		\item $ V^{\xi}_0=0$\blue{;}
		\item $ V^{\xi}_T \geq 0$ quasi-surely\blue{;}
		\item there exists $P \in \mathcal{P}$ in \eqref{eq:GExpectationUpperExpectation} such that $P(V_T^{\xi}>0)>0$. 
	\end{enumerate}
	This definition is in line with the definition of an arbitrage under volatility uncertainty introduced in \cite{vorbrink}.
	From now on, we assume by simplicity that $r=0$, $S_0=1$ and $\mu=0$. We fix two constants $\underline{\sigma}$, $\overline{\sigma}$ with $0<\underline{\sigma}\le1\le\overline{\sigma}$ and let $\Theta:=[\underline{\sigma}^2, \overline{\sigma}^2]$.
 We then claim that the trading strategy
	\begin{equation} \label{eq:TradingStrategy}
	\xi_t=\left(-(B_t^H)^2-2 B_t^H,2B_t^H\right), \quad t \in [0,T],
	\end{equation}
	is an arbitrage strategy with respect to the set $\mathcal{P}$ associated to $\Theta$. First note that 
	\begin{align*}
		V_t^{\xi}=-(B_t^H)^2-2 B_t^H+ 2 B_t^H (1+ B_t^H)= (B_t^H)^2, \quad 0 \le t \le T,
	\end{align*}
	which is bounded from below by $0$. Additionally, by applying Proposition \ref{prop:Ito} for $f(x)=x^2$ we get 
	$$
	V_t^\xi=\int_0^t 2 B_u^Hd B_u^H= V_0^{\xi} +\int_0^t \xi_u^1 dB_u^H,
	$$
	i.e. $\xi$ is pathwise self-financing and therefore \emph{pathwise admissible}. Obviously, $V_0^{\xi}=0$ and $V_T^{\xi}\geq 0$ quasi-surely.  Moreover, again according to Remark \ref{remark:ExplicitConstruction}, the set $\mathcal{P}$ includes \greenNew{the law $P^1$ of a standard $d$-dimensional Brownian motion $W$.} By \cite{shiryaev1998arbitrage} it follows that $P^1(V_T^{\xi}>0)>0.$  Therefore, \greenNew{$\xi$ as in \eqref{eq:TradingStrategy}} is an arbitrage opportunity with respect to $\mathcal{P}$.
	\end{example}
}

%\bibliography{2024_11_30_fBM_G-Setting.bib}{}
%\bibliographystyle{plain}

\end{document}